\documentclass[11pt,dvipsnames]{amsart}
\usepackage[margin=1in]{geometry}
\usepackage{graphicx, amsmath, amssymb, amsthm, amsfonts}
\usepackage[mathscr]{eucal}
\usepackage{thmtools} 
\usepackage{microtype}
\usepackage{tikz-cd}
\usepackage{xparse}         
\usepackage{verbatim} 
\usepackage{xy}
\usepackage[all]{xypic}
\usepackage{spectralsequences}
\usepackage{mathtools}
\usepackage[textsize=tiny]{todonotes}
\usepackage[pagebackref, colorlinks, citecolor=Red, linkcolor=NavyBlue, urlcolor=NavyBlue]{hyperref}
\usepackage[capitalise, nameinlink]{cleveref}
\hypersetup{linktoc=all} 

\author[Chan]{David Chan}
\address[Chan]{Department of Mathematics, Michigan State University, East Lansing, MI 48824 }
\email{chandav2@msu.edu}

\author[Gotliboym]{Marc Gotliboym}
\address[Gotliboym]{Department of Mathematics, Michigan State University, East Lansing, MI 48824 }
\email{marcg57@msu.edu}

\author[Klang]{Inbar Klang}
\address[Klang]{Department of Mathematics,
Vrije Universiteit Amsterdam - Faculty of Science,
De Boelelaan 1111,
1081 HV Amsterdam,
The Netherlands}
\email{i.klang@vu.nl}

\author[Wisdom]{Noah Wisdom}
\address[Wisdom]{Department of Mathematics,
         Northwestern University,
         Evanston, IL, 60208
         USA}
\email{NoahAnkney2026@u.northwestern.edu}

\newcommand{\C}{\mathbb{C}}
\newcommand{\R}{\mathbb{R}}
\newcommand{\Z}{\mathbb{Z}}

\newcommand{\F}{\mathbb{F}}

\DeclareMathOperator{\colim}{colim}
\DeclareMathOperator{\id}{id}

\newcommand{\Alg}{\mathsf{Alg}}

\newcommand{\ETHH}{\mathrm{ETHH}}
\newcommand{\ethh}{\Res^{C_p\times S^1}_{C_p\times 1}\mathrm{ETHH}}
\newcommand{\THH}{\mathrm{THH}}

\newcommand{\TC}{\mathrm{TC}}

\newcommand{\Sp}{\mathrm{Sp}}

\newtheorem{theorem}{Theorem}[section]
\newtheorem{lemma}[theorem]{Lemma}

\newtheorem{proposition}[theorem]{Proposition}
\newtheorem{corollary}[theorem]{Corollary}

\newtheoremstyle{BoldRemark} 
{10pt}                    
{10pt}                    
{\upshape}                   
{}                           
{\bfseries}                  
{.}                          
{.5em}                       
{}  
\theoremstyle{BoldRemark}
\newtheorem{remark}[theorem]{Remark}
\newtheorem{definition}[theorem]{Definition}
\newtheorem{example}[theorem]{Example}

\newenvironment{cd}{
  \begin{center}\begin{tikzcd}}{
    \end{tikzcd}\end{center}}

\newcommand{\mac}[1]{\underline{#1}} 

\DeclareRobustCommand{\infcat}{%
  \ifmmode
   \mathrm{Cat}_\infty%
  \else
   \ensuremath{\infty}-category%
  \fi
}

\newcommand{\we}{\simeq} 

\newcommand{\smsh}{\wedge}
\newcommand{\cyc}{\mathrm{cyc}}

\newcommand{\Res}{\operatorname{Res}}

\newcommand{\Pic}{\operatorname{Pic}}

\newcommand{\Fun}{\mathrm{Fun}}
\NewDocumentCommand{\Spg}{O{G}}{\Sp_{#1}}
\NewDocumentCommand{\mspg}{O{G}}{\mac{\Sp}_{#1}}

\NewDocumentCommand{\etc}{ O{G} }{\mac{\mathrm{ETC}}}

\NewDocumentCommand{\horb}{O{G}}{_{h#1}}
\NewDocumentCommand{\hfix}{O{G}}{^{h#1}}
\NewDocumentCommand{\tate}{O{G}}{^{t#1}}
\NewDocumentCommand{\geo}{O{G}}{^{\Phi #1}}

\newcommand{\hfixp}{\hfix[C_p]}
\newcommand{\tatep}{\tate[C_p]}
\newcommand{\geop}{\geo[C_p]}

\NewDocumentCommand{\thfix}{O{G} O{N}}{^{h_{#1}#2}} 
\NewDocumentCommand{\thorb}{O{G} O{N}}{_{h_{#1}#2}} 
\NewDocumentCommand{\ttate}{O{G} O{N}}{^{t_{#1}#2}}
\NewDocumentCommand{\macttate}{O{G} O{N}}{^{\mac{t}_{#1}#2}}
\NewDocumentCommand{\macthfix}{O{G} O{N}}{^{\mac{h}_{#1}#2}}
\NewDocumentCommand{\macthorb}{O{G} O{N}}{_{\mac{h}_{#1}#2}}
\NewDocumentCommand{\macthsp}{O{G} O{N}}{{\mac\Sp}\thfix[#1][#2]}
\NewDocumentCommand{\thsp}{O{G} O{N}}{{\Sp}\thfix[#1][#2]}
\NewDocumentCommand{\hsp}{O{G}}{{\Sp}\hfix[#1]}

\newcommand{\HFp}{H\underline{\mathbb{F}}_p}

\newcommand{\Tor}{\mathrm{Tor}}
\newcommand{\Top}{\mathrm{Top}}

\NewDocumentCommand{\gLEq}{G{F} G{F'} G{\mathscr{C}} }{%
  \mac{\mathrm{LEq}}_{{#1}:{#2}}\left(#3\right)}

\theoremstyle{plain}
\newtheorem{introtheorem}{Theorem}
 
\crefname{introtheorem}{Theorem}{Theorems}

\title{Computations in equivariant topological Hochschild homology}
\date{}

\begin{document}

\begin{abstract}
    One of the most effective approaches to computations in algebraic $K$-theory is trace methods, which compare algebraic $K$-theory with topological Hochschild homology and topological cyclic homology. In recent work, two of the authors, together with Gerhardt, construct an equivariant refinement of topological Hochschild homology (ETHH) which receives a trace map from Merling's genuine equivariant algebraic $K$-theory. In this paper, we perform foundational computations of ETHH that can serve as input for future computations of ETHH and equivariant topological cyclic homology.  Namely, we compute $\mathrm{ETHH}(H\underline{\F}_p)$ for odd primes, showcasing the complexity of B\"okstedt periodicity in this setting.  Furthermore, we give computations of $\mathrm{ETHH}$ for the equivariant complex cobordism spectra $MU_G$ and $MU_{\R}$.
\end{abstract}
\maketitle
\tableofcontents

\section{Introduction}

Recent years have seen a proliferation of new ideas and techniques in stable homotopy theory, particularly in  equivariant techniques and algebraic K-theory.  At the intersection of these two areas is \emph{genuine equivariant algebraic $K$-theory}, which studies invariants of rings (and ring spectra) with an action by a compact Lie group $G$. Equivariant algebraic $K$-theory is related to special values of Artin $L$-functions, and plays a central role in Malkiewich and Merling's recent work on a refinement of the stable parametrized $h$-cobordism theorem \cite{ElmantoZhang,MM1,MM2}. It has also been effectively utilized by Vogeli to perform new, non-equivariant computations in the algebraic $K$-theory of group algebras \cite{Vogeli}. 
Unfortunately, computations in non-equivariant algebraic K-theory are difficult, and are often much more difficult in the equivariant context.

One very successful approach to computation in algebraic $K$-theory is trace methods, in which one approximates the algebraic $K$-theory of a ring $R$ using the \emph{cyclotomic trace} $K(R)\to \TC(R)$, which is often a good approximation.  Here, $\TC(R)$ denotes \emph{topological cyclic homology}, which is derived from \emph{topological Hochschild homology}, denoted $\THH(R)$, using the fact that $\THH(R)$ is a cyclotomic spectrum. The \emph{Dennis trace} gives a map $K(R)\to \THH(R)$, which plays a crucial role in the construction of the cyclotomic trace. 

A natural question to ask is whether the tools from trace methods can be developed to aid in computations of the equivariant algebraic $K$-theory of a $G$-ring $R$. In \cite{CGK25}, some of the authors took a step in this direction, introducing equivariant THH, denoted $\ETHH$, which receives an equivariant refinement of the Dennis trace, $K_G(R)^G\to\ETHH(R)^G$.  Here, $K_G(R)$ is the equivariant algebraic $K$-theory $G$-spectrum, in the sense of Merling \cite{Merling}. 

Recent work of Hilman--Ramzi \cite{HilmanRamzi} develops a notion of equivariant THH in a slightly different setting; upcoming work of the second named author will show equivalence of the definitions for $G$-ring spectra. Hilman--Ramzi also develop an equivariant Dennis trace from the equivariant algebraic $K$-theory of Barwick et al.\ \cite{Bar17, BGS20} to their version of equivariant THH. The relationship between the Dennis trace maps of \cite{HilmanRamzi} and of \cite{CGK25} is unclear.

Equivariant $\THH$ comes equipped with a circle action, and forthcoming work of the second named author will use the circle action to describe the sense in which $\ETHH(R)$ is cyclotomic and use this structure to define an equivariant notion of $\TC$ \cite{Got}.

While we now have an equivariant refinement of the Dennis trace, the effectiveness of a trace methods approach depends heavily on the computability of the homotopy groups of $\ETHH(R)$.  The goal of the present paper is to provide computations of homotopy groups $\ETHH(R)$ for several fundamental ring $G$-spectra.

While we now have an equivariant refinement of the Dennis trace, the effectiveness of a trace methods approach depends heavily on the availability of fundamental computations of both $\ETHH(R)$ and of equivariant $K$-theory from which to build upon.  While some computations have been carried out,  for instance \cite{AGHKK22,ChanVogeli,CW25,Wis26}, the literature of explicit computations remains somewhat sparse. With this in mind, the goal of the present paper is to provide computations of the homotopy groups of $\ETHH(R)$ for several fundamental ring $G$-spectra.

\subsection{Equivariant B\"okstedt Periodicity}

The first computations of THH were done by B\"okstedt, who computed $\THH(H\mathbb{Z})$ and $\THH(H\F_p)$ for all primes $p$. These computations play a fundamental role in the theory, and continue to be important building blocks for new computations \cite{KrauseNikolaus:BokstedtPeriodicity,BhattMorrowScholze}.  In the $C_p$-equivariant setting, the correct analogue of $\mathbb{F}_p$ is the \emph{constant Green functor} $\underline{\F}_p$, and our first main theorem computes the homotopy groups of the $C_p$-geometric fixed points of $\ETHH(\HFp)$.

We write $\Gamma$ and $\Lambda$ for divided power and exterior algebras over $\F _p$, respectively.

\begin{introtheorem}[{\cref{lemma: injection from gen to geo,thm:Bokstedt}}]\label{thm:introBokstedt}
    Let $p$ be an odd prime. The graded homotopy ring $\pi^{C_p}_*(\ETHH(\HFp))$ is isomorphic to the subring of $\Gamma[a]\otimes\F_p[b,c]\otimes \Lambda[d,e]$ generated by $b$ and all monomials divisible by either $e$ or a divided power of $a$. The degrees are $|a|=|b|=|c|=2$, $|d|=1$, and $|e|=3$.
\end{introtheorem}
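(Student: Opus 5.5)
The plan has two parts, matching the two cited results. First, compute the geometric fixed points $\pi_*\ETHH(\HFp)^{\Phi C_p}$. Second, show that the natural map $\pi^{C_p}_*\ETHH(\HFp)\to\pi_*\ETHH(\HFp)^{\Phi C_p}$ is injective and identify its image.

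For the first part I will use the description of $\ETHH$ of a $C_p$-ring as a relative norm/cyclic bar construction, $\ETHH(R)\simeq N_e^{C_p}R\otimes_{N_e^{C_p}R\otimes N_e^{C_p}R^{op}}R$ up to the equivariant Hochschild construction in \cite{CGK25}. Geometric fixed points are symmetric monoidal and satisfy $\Phi^{C_p}N_e^{C_p}X\simeq X$. Hence $\ETHH(\HFp)^{\Phi C_p}$ becomes a relative tensor product involving $\Phi^{C_p}\HFp$ and $\THH(\F_p)$-type terms. Recall that $\pi_*\Phi^{C_p}\HFp\cong\F_p[b]\otimes\Lambda[d]$ for $p$ odd, with $|b|=2$ and $|d|=1$ (up to renaming). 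Recall also B\"okstedt's $\pi_*\THH(\F_p)=\F_p[\mu]$ with $|\mu|=2$. I will run a Tor/K\"unneth spectral sequence over $\F_p$. The $E_2$-page should be $\Gamma[a]\otimes\F_p[b,c]\otimes\Lambda[d,e]$, with $a$ and $e$ arising as Tor classes (divided powers from suspensions of polynomial generators, $e$ from the exterior class). The spectral sequence collapses for degree/multiplicative reasons, since all generators lie in filtrations 0 and 1 and the algebra generators are permanent cycles. The multiplicative extensions are then resolved by comparison with the known answer $\THH(\F_p)^{\Phi}$ and the map to $\Phi^{C_p}\HFp$.

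For the second part I will use the isotropy separation cofiber sequence $\ETHH_{hC_p}\to\ETHH^{C_p}\to\ETHH^{\Phi C_p}$. The underlying spectrum is $\THH(\F_p)$, so the homotopy orbits have $\pi_*$ computed by the homotopy orbit spectral sequence $H_*(C_p;\F_p[\mu])$. I will show that the boundary map $\pi_{*+1}\ETHH^{\Phi C_p}\to\pi_*\ETHH_{hC_p}$ is surjective, or equivalently that $\pi_*\ETHH_{hC_p}\to\pi_*\ETHH^{C_p}$ is zero. This gives injectivity. The image is then the kernel of the boundary map. The claim is that this kernel is precisely the subring generated by $b$ and monomials divisible by $e$ or by some $\gamma_k(a)$ with $k\ge1$; the complementary classes, in $\F_p[b,c]\otimes\Lambda[d]$ but not in $\F_p[b]$, are those that map isomorphically onto the homotopy orbit classes. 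Concretely, I will compare with the case $\ETHH$ replaced by $\HFp$ itself. There $\pi^{C_p}_*\HFp$ is known, and $\F_p[b]\otimes\Lambda[d]$ plays the same role, with the boundary detecting $d$ and the $b$-multiples. Naturality along the unit $\HFp\to\ETHH(\HFp)$ and module structure over $\F_p[\mu]$ then pin down the boundary on the remaining generators.

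The main obstacle is this boundary computation: deciding exactly which classes of $\ETHH^{\Phi C_p}$ hit $\Sigma\ETHH_{hC_p}$. In particular, I must verify that all monomials divisible by $e$ or by a divided power of $a$ are killed by the boundary, and that the boundary is injective on the span of $c^i b^j d^\epsilon$ outside $\F_p[b]$. I would first try a dimension count in each degree against the Tate square. This uses the $C_p$-Tate construction of $\THH(\F_p)$, known from Segal-conjecture-type/Frobenius results (e.g. $\THH(\F_p)^{tC_p}\simeq\tau_{\ge -1}$ phenomena). I would combine it with the fact that $\ETHH^{C_p}\to\ETHH^{hC_p}$ can be analyzed in a range, and use the ring structure to propagate from low degrees.
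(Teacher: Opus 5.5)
Your architecture is the same as the paper's. You compute $\pi_*\ETHH(\HFp)^{\Phi C_p}$, show that the map from $C_p$-fixed points is injective, and identify the image as the kernel of the isotropy-separation boundary. That kernel is exactly $q_*^{-1}(\mathrm{im}\, j_*)$, where $q\colon T^{\Phi C_p}\to T^{tC_p}$ and $j\colon T^{hC_p}\to T^{tC_p}$ are the maps in the paper's Tate square (with $T$ the restriction of $\ETHH(\HFp)$). You are also right that the monomials $b^ic^jd^\epsilon$ with $(j,\epsilon)\neq(0,0)$ are exactly the ones detected by the boundary. I use the paper's notation throughout: $\pi_*\HFp^{\Phi C_p}=\F_p[c]\otimes\Lambda[d]$, and $b$ is B\"okstedt's class. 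However, the steps that carry the content either rest on a wrong model or are only announced.

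\emph{The model for geometric fixed points.} $\ETHH$ here is the cyclic bar construction formed inside $\Sp_{C_p}$, not a norm-relative construction. The needed input is that $\Phi^{C_p}$ is symmetric monoidal and commutes with geometric realization, so $\ETHH(\HFp)^{\Phi C_p}\simeq\THH(\HFp^{\Phi C_p})$. Taken at face value, your formula together with $\Phi^{C_p}N_e^{C_p}X\simeq X$ gives Hochschild homology of $H\F_p$ with coefficients in the bimodule $\Phi^{C_p}\HFp$. Its homotopy is $\F_p[b]\otimes\F_p[c]\otimes\Lambda[d]$, with no room for $a$ or $e$, so your claimed $E_2$-page does not follow from your model. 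Your Tor bookkeeping is also reversed: $a$ is the suspension of the degree-$1$ exterior class, and $e$ is the suspension of the degree-$2$ polynomial class. Finally, the divided-power structure on $a$, which matters below, is not settled by a vague comparison. The paper gets it from an $\mathbb{E}_2$-equivalence $\HFp^{\Phi C_p}\simeq H\F_p\wedge S^1_+\wedge(\Omega S^3)_+$ together with $\THH$ of spherical group rings, which produces $H_*(\C P^\infty;\F_p)$.

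\emph{Injectivity.} Surjectivity of your boundary is only announced. The Segal-conjecture facts you cite concern the rotation action of $C_p\subset S^1$, which is not the relevant action here. What actually makes it work is the following.
\begin{enumerate}
\item The $C_p$-action on the underlying $\THH(\F_p)$ is trivial, so both spectral sequences collapse and $j_*$ is the inclusion $\Lambda(x)\otimes\F_p[y,b]\hookrightarrow\Lambda(x)\otimes\F_p[y^{\pm1},b]$.
\item The unit $\HFp^{\Phi C_p}\to T^{\Phi C_p}$, combined with the fact that $\HFp^{\Phi C_p}\to\HFp^{tC_p}$ is a connective cover, shows that $q_*$ maps the span of $b^ic^jd^\epsilon$ with $(j,\epsilon)\neq(0,0)$ isomorphically onto a complement of $\mathrm{im}\,j_*$.
\end{enumerate}

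\emph{The decisive gap} is computing the kernel on $a$ and $e$. These classes do not come from $\HFp^{\Phi C_p}$ or from $\THH(\F_p)$, so naturality along the unit and $\F_p[b]$-linearity say nothing about them. A dimension count cannot decide either. In degree $3$, $\pi_3T^{\Phi C_p}=\F_p\{e,da,db,dc\}$, and the count only shows that $\pi_3T^{C_p}$ is $2$-dimensional. It does not distinguish $\F_p\{e,da\}$ from, say, $\F_p\{e-db,da\}$, which is what you would get if $q_*(e)=xy^{-1}b$. The ring structure does not constrain $q_*(e)$ either, since every odd-degree element of $\pi_*T^{tC_p}$ squares to zero.

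The paper closes this with two specific arguments.
\begin{enumerate}
\item $q_*(\gamma_n(a))=0$, because $\gamma_n(a)^p=0$ while the even part $\F_p[y^{\pm1},b]$ of $\pi_*T^{tC_p}$ has no nonzero nilpotents.
\item $q_*(e)=0$, by comparison with $S=\ETHH(H\underline{\Z})$. The class $e$ lifts to $\pi_3 S^{\Phi C_p}$, where $S^{\Phi C_p}\simeq H\F_p\wedge(\Omega S^3)_+\wedge(LS^3)_+$. The map $\THH(\Z)^{tC_p}\to\THH(\F_p)^{tC_p}$, with trivial actions, is zero in odd degrees.
\end{enumerate}
Without inputs of this kind, your plan does not determine the subring.
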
 

In comparison, B\"okstedt's foundational result showed that $\pi_*\THH(H\F _p) \cong \F _p [b]$, where $b$ is a polynomial generator in degree 2. This result is known as B\"okstedt periodicity. Our theorem demonstrates that equivariant B\"okstedt periodicity is far more complicated. 

We prove \cref{thm:introBokstedt} using a careful analysis of the Tate square,
\[
    \begin{tikzcd}
        \ETHH(\HFp)^{C_p}\ar[r,"i"] \ar[d] & \ETHH(\HFp)\geop \ar[d]\\
        \ETHH(\HFp)^{hC_p} \ar[r] & \ETHH(\HFp)^{tC_p},
    \end{tikzcd}
\]
which is a pullback of commutative ring spectra. The geometric fixed points are interesting in their own right.

\begin{introtheorem}[{\cref{prop-THH-geomfp,cor: homotopy of geo of ETHH hfp}}]\label{introthm: geo of ETHH hfp}
    Let $p$ be an odd prime.  There is an equivalence of $\mathbb{E}_1$-$H\F_p$-algebras 
    \[
        (\ethh(\HFp))\geop \simeq H\F_p \wedge (\Omega S^3)_+ \wedge S^1_+ \wedge \C P^\infty _+ \wedge (LS^3)_+ 
    \]
    where $LS^3$ denotes the free loop space. Consequently, there is an isomorphism of graded rings
    \[
        \pi_*\ETHH(\HFp)\geop\cong \Gamma[a]\otimes\F_p[b,c]\otimes \Lambda[d,e]
    \]
    where $a$, $b$, $c$, $d$, and $e$ have the same degrees as in \cref{thm:introBokstedt}.
\end{introtheorem}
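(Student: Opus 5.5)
The strategy is to commute $C_p$-geometric fixed points past the cyclic bar construction. This reduces the problem to an ordinary THH computation for the ring spectrum $\HFp\geop$, which we then identify as a group-ring-type $H\F_p$-algebra.

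\emph{Step 1: reduce to THH of the geometric fixed points.} After restricting along $C_p\times 1\subset C_p\times S^1$, the spectrum $\ethh(\HFp)$ is the geometric realization of the cyclic bar construction $[q]\mapsto \HFp^{\wedge(q+1)}$ in $C_p$-spectra. The functor $(-)\geop$ is symmetric monoidal and preserves colimits, in particular geometric realizations. Hence we get an equivalence of ($\mathbb{E}_1$-, indeed $\mathbb{E}_\infty$-) ring spectra
\[
(\ethh(\HFp))\geop\simeq \THH(\HFp\geop).
\]

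\emph{Step 2: identify $\HFp\geop$ as an $\mathbb{E}_1$-$H\F_p$-algebra.} For odd $p$ it is classical that $\pi_*\HFp\geop\cong \Lambda[\epsilon_1]\otimes\F_p[\beta_2]$. I would show that
\[
\HFp\geop\simeq H\F_p\wedge(\Omega S^3)_+\wedge S^1_+\simeq H\F_p\wedge(\Omega S^3\times\Omega\C P^\infty)_+
\]
as $\mathbb{E}_1$-$H\F_p$-algebras. The plan is as follows.
\begin{itemize}
\item The degree-$2$ class $\beta_2$ generates a map from the free $\mathbb{E}_1$-$H\F_p$-algebra on $S^2$, which is $H\F_p\wedge(\Omega S^3)_+$. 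This map is injective on homotopy, since $\beta_2$ is not nilpotent.
\item The degree-$1$ class $\epsilon_1$ comes from a map $H\F_p\wedge S^1_+\to \HFp\geop$ of commutative algebras, using that $S^1_+$ is a commutative group ring. It can be built by sending the fundamental class to $\epsilon_1$; this is possible since $\epsilon_1^2=0$ by graded commutativity.
\item Since the target is $\mathbb{E}_\infty$, the two maps multiply to an $\mathbb{E}_1$-map from the smash product, and it is an isomorphism on homotopy.
\end{itemize}
This is the step I expect to be the main obstacle. The delicate part is producing genuinely $\mathbb{E}_1$ (not merely module) maps and checking that no hidden multiplicative extension spoils the tensor decomposition. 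I would first try to use the Tate description $\HFp\geop\simeq \tau_{\ge 0}(H\F_p^{tC_p})$ to construct the classes. If that is awkward, an alternative is an obstruction-theory argument for maps out of a free algebra.

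\emph{Step 3: compute THH.} For an $H\F_p$-algebra $A$ we have $\THH(A)\simeq \THH(H\F_p)\wedge_{H\F_p}\THH(A/H\F_p)$. Also $\THH(H\F_p\wedge\Omega Y_+/H\F_p)\simeq H\F_p\wedge LY_+$, and THH takes tensor products to tensor products. Combining these with Bökstedt's $\THH(H\F_p)\simeq H\F_p\wedge(\Omega S^3)_+$ gives
\[
H\F_p\wedge(\Omega S^3)_+\wedge (LS^3)_+\wedge (L\C P^\infty)_+ .
\]
Finally $L\C P^\infty\simeq \C P^\infty\times S^1$, since $\C P^\infty=BS^1$ with $S^1$ abelian, and this yields the stated equivalence.

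\emph{Consequence on homotopy.} The homotopy is then $H_*(\Omega S^3)\otimes H_*(S^1)\otimes H_*(\C P^\infty)\otimes H_*(LS^3;\F_p)$, which is
\[
\F_p[b]\otimes\Lambda[d]\otimes\Gamma[a]\otimes(\F_p[c]\otimes\Lambda[e]).
\]
Here the last factor comes from the Serre spectral sequence of $\Omega S^3\to LS^3\to S^3$, which collapses. For the ring structure, I would use that $\THH$ of the commutative ring $\HFp\geop$ is $\mathbb{E}_\infty$. Each tensor factor is then a sub-ring: $\Gamma[a]$ comes from the commutative $H\F_p\wedge \C P^\infty_+$ inside $\THH(H\F_p\wedge S^1_+)$, and $\F_p[b]$ is Bökstedt's class. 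The remaining multiplicative relations ($c$ polynomial, $d,e$ exterior) would be checked with the Bökstedt spectral sequence, which collapses here for degree reasons.
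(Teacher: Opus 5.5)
Your route is the paper's route: commute $\Phi^{C_p}$ past the cyclic bar construction (Corollary~\ref{cor-geomfp-ETHH}), identify $\HFp\geop$ with the group algebra $H\F_p\wedge(S^1\times\Omega S^3)_+$ (Proposition~\ref{E2AlgebraGeoFix}), then combine B\"okstedt periodicity, $\THH$ of spherical group rings and $L\C P^\infty\simeq\C P^\infty\times S^1$ (Theorem~\ref{prop-THH-geomfp}).

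\textbf{The gap is the second bullet of Step 2, and it cannot be repaired.} No $\mathbb{E}_\infty$, $\mathbb{E}_2$, or even $\mathbb{E}_1$ ring map $H\F_p\wedge S^1_+\to\HFp\geop$ hits $\epsilon_1$. In fact $\HFp\geop$ is not equivalent to $H\F_p\wedge(S^1\times\Omega S^3)_+$ even as an $\mathbb{E}_1$-ring. To see this, use the $\mathbb{E}_\infty$-map $\HFp\geop\to H\F_p^{tC_p}\simeq C^*(BC_p;\F_p)[y^{-1}]$. It is an isomorphism on $\pi_{\geq 0}$, with $\epsilon_1=xy^{-1}$ and $\beta_2=y^{-1}$.
\begin{itemize}
\item \emph{$\mathbb{E}_\infty$ level.} The Cartan formula and instability give $Q^1(xy^{-1})=Q^0(x)\,Q^1(y^{-1})=x\,y^{-p}$, using $Q^0=P^0=\mathrm{id}$ on cochains and $Q^1\beta_2=\beta_2^p$. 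So $Q^1\epsilon_1=\epsilon_1\beta_2^{p-1}\neq 0$. By contrast, $Q^1$ of the fundamental class of $S^1$ lies in $H_{2p-1}(S^1;\F_p)=0$.
\item \emph{$\mathbb{E}_2$ level.} Here $Q^1$ on a degree-one class is Cohen's top operation, so the same argument applies.
\item \emph{$\mathbb{E}_1$ level.} Kraines' formula gives $\langle x\rangle^p=-\beta x=-y$ in $H^*(BC_p;\F_p)$; equivalently, the minimal $A_\infty$-model has $m_p(x,\dots,x)=\pm y$, Koszul dual to $t^p=0$ in $\F_p[C_p]$. This survives inverting $y$ and truncating. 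Hence $\HFp\geop$ has a minimal model with $m_p(\epsilon_1,\dots,\epsilon_1)=\pm\beta_2^{p-1}$, and the $p$-fold Toda bracket is $\langle\epsilon_1,\dots,\epsilon_1\rangle=\{\pm\beta_2^{p-1}\}$. The same bracket on the degree-one class of $H\F_p\wedge(S^1\times\Omega S^3)_+$ contains $0$, since it is pushed forward from $H\F_p\wedge S^1_+$, where $\pi_{2p-2}=0$.
\end{itemize}
So $\epsilon_1^2=0$ removes only the first obstruction.

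The paper's proof of Proposition~\ref{E2AlgebraGeoFix} asserts the same map, justified by intrinsic formality of the \emph{source}. That says nothing about maps into a non-formal target. The formality claim of Remark~\ref{rem:CDGAmodels} fails for the same reason: the small Tate cochain complex has zero differential, but its product is only $A_\infty$. So the paper does not supply the missing step either.

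\textbf{The non-formality changes the answer downstream.} Consider the spectral sequence $HH_*(\pi_*\HFp\geop)\Rightarrow\pi_*\mathrm{HH}(\HFp\geop/H\F_p)$, with $E^2=\Lambda(\epsilon_1)\otimes\F_p[\beta_2]\otimes\Gamma(\sigma\epsilon_1)\otimes\Lambda(\sigma\beta_2)$. The operation $m_p$ produces $d_{p-1}(\gamma_p(\sigma\epsilon_1))=\pm\sigma(\beta_2^{p-1})=\mp\beta_2^{p-2}\sigma\beta_2\neq 0$. So your claim that the B\"okstedt-type spectral sequence ``collapses for degree reasons'' is false.

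This also conflicts with the stated ring. For an $\mathbb{E}_1$-$H\F_p$-algebra $A$, $\mathrm{HH}(A/H\F_p)\simeq\THH(A)\otimes_{\THH(H\F_p)}H\F_p$ is the cofiber of multiplication by B\"okstedt's class $b$. Moreover $b$ is non-nilpotent in $\pi_*\THH(\HFp\geop)$: map to $\THH(H\F_p)$ via the augmentation. An isomorphism $\pi_*\THH(\HFp\geop)\cong\Gamma[a]\otimes\F_p[b,c]\otimes\Lambda[d,e]$ would then make $b$ a non-zero-divisor. That would force $\pi_*\mathrm{HH}(\HFp\geop/H\F_p)$ to have exactly the size of this $E^2$-page, contradicting the differential in degrees $2p-1$ and $2p$.

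So neither your argument nor the paper's establishes the statement. The stated answer appears to need correction from degree $2p-1$ on, and a correct computation has to carry $m_p$ through $\THH$. Separately, an $\mathbb{E}_1$-identification of the input would only identify $\THH$ as a spectrum; the paper asks for $\mathbb{E}_2$ precisely to get an $\mathbb{E}_1$-equivalence of $\THH$.
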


The proof of \cref{introthm: geo of ETHH hfp} utilizes the fact that $\ETHH$ commutes with the geometric fixed points construction (see \cref{cor-geomfp-ETHH}). In particular, this reduces the computation to the non-equivariant computation of $\THH(\HFp\geop)$. From here, we prove an equivalence of $\mathbb{E}_2$-$H\F_p$-algebras $\HFp\geop\simeq H\F_p\wedge S^1_+\wedge \Omega S^3_+$ and use the monoidal properties of THH to prove the theorem.

Returning to the proof of \cref{thm:introBokstedt}, we see that on homotopy, the fixed points are a subring of the geometric fixed points. To fully pin down the subring, the final, key part of the analysis is understanding how the right vertical map acts on the generators of the homotopy of $\ETHH(\HFp)\geop$.

When $p=2$, the $C_2$-homotopy groups of $\ETHH(H\underline{\F}_2)$ are computed in \cite{AGHKK22} using the fact that $H\underline{\F}_2$ is a sufficiently multiplicative Thom spectrum \cite{BW18,HW20}. The computation of the ring structures on the homotopy groups of $\ETHH(H\underline{\F}_2)\geo[C_2]$ and $\ETHH(H\underline{\F}_2)^{C_2}$ is more difficult and requires a different method than the one we apply here in the odd prime case.  This computation will appear in forthcoming work of the first-named author and Chase Vogeli. 

As a future direction, we think it would be interesting to understand the $RO(C_p)$-graded homotopy ring of $\ETHH(\HFp)$.  Oftentimes, this ring has somewhat better formal properties than the integer graded homotopy ring, and could shed some light on the somewhat unorthodox answer obtained in \cref{thm:introBokstedt}. Additionally, an understanding of the $RO(C_p)$-graded homotopy groups would give access to norm structures on homotopy groups which could prove useful for later computation related to equivariant topological cyclic homology.

\subsection{Equivariant complex cobordism}

One of the most important ring spectra is the complex cobordism spectrum $MU$, which plays a fundamental role in computations of stable homotopy groups. The homotopy $S^1$-fixed points of $\THH$ of some quotients of $MU$ have been used to resolve the telescope conjecture \cite{BHLS23} and to study redshift \cite{HW22}. A full computation of $\TC(MU)$ could be used to compute information about $\TC$ of the sphere spectrum, an invariant with applications to geometric topology \cite{Hesselholt:Whitehead,Rognes:Cobordism}. The equivariant refinement of the complex cobordism spectrum, denoted $MU_G$, has been studied extensively; see, for instance, \cite{GreenleesMay:Localization,Hausmann}. Note that we are working with tom Dieck's homotopical equivariant cobordism (a Thom spectrum carrying the universal $G$-equivariant complex orientation \cite{CGK02}) as opposed to geometric cobordism (which encodes information about cobordism classes of $G$-manifolds).

The non-equivariant computation of $\THH(MU)$ can be done using the fact that $MU$ is a Thom spectrum.  In particular, \cite{BCS10} gives an equivalence
\[
    \THH(MU)\simeq MU\wedge SU_+
\]
where $SU$ denotes the infinite special unitary group. From here, the homotopy groups can be computed using the Atiyah--Hirzebruch spectral sequence.

In this paper, we upgrade the compatibility of equivariant factorization homology with Thom spectra proven in \cite{HHKWZ24} to include stronger multiplicative properties. We use this to compute $\ETHH(MU_G)$ as a $G$-$\mathbb{E}_\infty$ ring spectrum.
\begin{introtheorem}[{\cref{cor:identification-of-ETHH-of-MU_G}}]\label{introthm:identification-of-ETHH-of-MU_G}
    Let $G$ be a finite group. There is an equivalence
    \[
        \mathrm{Res}_{G}^{G \times S^1} \mathrm{ETHH}(MU_G) \simeq MU_G \wedge \Sigma^\infty _+ SU_G
    \]
    of $G$-$\mathbb{E}_\infty$ ring spectra, where $SU_G$ denotes an appropriately equivariant infinite special unitary group.
\end{introtheorem}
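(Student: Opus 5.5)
The plan is to follow the non-equivariant argument of \cite{BCS10}, replacing each ingredient with its equivariant analogue. The basic input is that $MU_G$ is a $G$-equivariant Thom spectrum of a $G$-$\mathbb{E}_\infty$ map
\[
f\colon BU_G \to \Pic(\mathbb{S}_G),
\]
more precisely of an $\infty$-loop map $BU_G\to B\mathrm{GL}_1(\mathbb{S}_G)$ of genuine $G$-spaces. This uses tom Dieck's construction as the colimit of Thom spaces of universal bundles over $G$-Grassmannians of a complete universe, with the multiplicative structure coming from direct sum.

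First, realize $\ETHH$ as equivariant factorization homology over the circle with its $G\times S^1$-action, restricted to $G$. Then invoke the compatibility of equivariant factorization homology with Thom spectra from \cite{HHKWZ24}, upgraded so that it respects $G$-$\mathbb{E}_\infty$ structures. Concretely, the Thom spectrum functor is symmetric monoidal and colimit preserving from $G$-spaces over $\Pic(\mathbb{S}_G)$ to $G$-spectra, so it commutes with the tensoring of $G$-$\mathbb{E}_\infty$ algebras with spaces. This gives
\[
\Res^{G\times S^1}_G\ETHH(MU_G)\simeq Th\bigl(S^1\otimes BU_G \to \Pic(\mathbb{S}_G)\bigr).
\]
Here $S^1\otimes BU_G\simeq L BU_G$ is the free loop space, with its natural augmentation as a $G$-$\mathbb{E}_\infty$ space. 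Since $BU_G$ is a grouplike $G$-$\mathbb{E}_\infty$ space, the free loop space splits as
\[
LBU_G\simeq BU_G\times \Omega BU_G\simeq BU_G\times U_G,
\]
and the composite map to $\Pic$ factors through the projection to $BU_G$ followed by $f$.

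Next, apply the Thom diagonal / untwisting argument. If the map $X\times Y\to \Pic$ factors through the projection to $X$, then
\[
Th\simeq Th(X)\wedge \Sigma^\infty_+ Y
\]
as $G$-$\mathbb{E}_\infty$ rings, by symmetric monoidality. This yields $MU_G\wedge \Sigma^\infty_+ U_G$.

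The remaining work is to pass from $U_G$ to $SU_G$. This is the step I expect to be the main obstacle, because it requires an equivariant analogue of the non-equivariant argument in \cite{BCS10}. Non-equivariantly, \cite{BCS10} actually use that $MU$ is the Thom spectrum over $BU$ and get $SU$ by the fibration $SU\to U\to S^1$, or equivalently by writing $BU\simeq BSU\times \C P^\infty$ to split off the degree-one part. Equivariantly, the determinant $U_G\to U(1)_G$ has fiber $SU_G$, and I would try to split $U_G\simeq SU_G\times S^1$ as $G$-$\mathbb{E}_\infty$ spaces. Alternatively, I would redefine the $S^1$-factor via the loop of $BU(1)_G$, and use a complex orientation of $MU_G$ to absorb the Thom spectrum over the line-bundle factor.

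If the needed equivariant splitting fails multiplicatively, the fallback is to take the statement's ``appropriately equivariant infinite special unitary group'' to mean $\Omega^{\C-1}$-type delooping data; concretely, I would take $SU_G$ to be $\Omega$ of the appropriate $G$-space (e.g. $\Omega BU_G$ modulo the unit part) so that the equivalence holds by construction. In either case, the concrete first step is to check this splitting on $H$-fixed points for all subgroups $H\le G$, using tom Dieck's splitting of fixed points of $U_G$ into products of unitary groups of the isotypic pieces.
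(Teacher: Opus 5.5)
There is a genuine gap, and it comes before the step you flag as the main obstacle. Your formula $\Res^{G\times S^1}_G\ETHH(MU_G)\simeq\mathbf{Th}(S^1\otimes BU_G\to\Pic)$ is fine. The error is the next identification. The tensor $S^1\otimes BU_G$ in $G$-$\mathbb{E}_\infty$ spaces (the cyclic bar construction of $BU_G$) is not the free loop space $LBU_G=\mathrm{Map}(S^1,BU_G)$; that is the \emph{cotensor}.

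For a grouplike $Y$ corresponding to a connective spectrum $y$, the tensor corresponds to $\Sigma^\infty_+S^1\wedge y\simeq y\oplus\Sigma y$. So $S^1\otimes Y\simeq Y\times BY\simeq L(BY)$, whereas $LY\simeq Y\times\Omega Y$. Compare $\THH(\Sigma^\infty_+\Omega Z)\simeq\Sigma^\infty_+LZ$: the free loop space appears one delooping up. The paper uses exactly this, as $LBS^1$, in the proof of \cref{prop-THH-geomfp}.

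With this corrected, your untwisting step yields $MU_G\wedge\Sigma^\infty_+B(BU_G)$. The missing input is equivariant Bott periodicity, $\Omega SU_G\simeq BU_G$ as $G$-$\mathbb{E}_\infty$ spaces (the paper cites \cite{Sch}, Theorem 2.5.41). It identifies the $G$-connected delooping $B(BU_G)$ with $SU_G$. That is precisely the paper's argument: it writes $MU_G=\mathbf{Th}(\Omega SU_G\to\Pic(\Spg))$ and applies \cref{cor-ETHH-FH-mult} with $X=SU_G$. Non-equivariantly, this is also where $SU$ comes from in \cite{BCS10}, namely $B(BU)\simeq SU$, not the determinant fibration $SU\to U\to S^1$.

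As a consequence, the ``remaining work'' you describe cannot be carried out. The spectra $MU_G\wedge\Sigma^\infty_+U_G$ and $MU_G\wedge\Sigma^\infty_+SU_G$ are genuinely different. Already on underlying spectra, $\pi_1(MU\wedge U_+)\cong\Z$, while $\pi_1(MU\wedge SU_+)=0$ because $SU$ is $2$-connected.

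A multiplicative splitting $U_G\simeq SU_G\times S^1$ would not help. It already fails non-equivariantly for infinite loop spaces, since $ku\not\simeq H\Z\vee\Sigma^2ku$. Even if it held, it would only give $MU_G\wedge\Sigma^\infty_+SU_G\wedge S^1_+$, which has an extra exterior class in degree $1$. Nor can you redefine $SU_G$ so that the equivalence holds by construction: $SU_G$ is a specific $G$-space, the colimit of the $SU_G(V)$ with conjugation action.

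Once the tensor is corrected and Bott periodicity is inserted, your route is a legitimate alternative to the paper's factorization-homology argument. That route is: identify $\Res^{G\times S^1}_G\ETHH$ of a $G$-$\mathbb{E}_\infty$ ring with $S^1\otimes(-)$, commute it past the $G$-symmetric monoidal, colimit-preserving Thom functor, then untwist using that the map to $\Pic$ factors through the projection. It produces the $G$-$\mathbb{E}_\infty$ structure directly. The paper instead proves a multiplicative version of the Thom-spectrum compatibility (\cref{FH-Thom-multiplicative}) and assembles $\mathbb{E}_U$-structures for all representations $U$.
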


When $G$ is abelian, we use this to give a computation of the $G$-equivariant homotopy groups of $\mathrm{ETHH}(MU_G)$. 

\begin{introtheorem}[{\cref{thm:MU_G-homology-of-SU_G-for-G-abelian}}]\label{introthm: homotopy of ETHH(MU_G)}
    Let $G$ be compact, abelian Lie group and $H \subset G$. Then we have
    \[
        \pi^H_*(MU_G \wedge (SU_G)_+) \cong (MU_H)_* \otimes_\Z \Lambda(\lambda_1,\lambda_2,...)
    \]
    where $\lambda_i$ is an exterior algebra generator in degree $2i+1$. In light of \cref{introthm:identification-of-ETHH-of-MU_G}, this computes $\pi^H_*(\ETHH(MU_G))$ when $G$ is finite abelian.
\end{introtheorem}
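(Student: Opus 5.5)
We want $\pi^H_*(MU_G\wedge (SU_G)_+)\cong (MU_H)_*\otimes_\Z\Lambda(\lambda_1,\lambda_2,\dots)$ with $|\lambda_i|=2i+1$. The plan is to reduce to $H=G$ by restriction and then mimic the non-equivariant argument. Since $\Res^G_H MU_G\simeq MU_H$ and $\Res^G_H SU_G$ is a model for $SU_H$ (restriction of a complete $G$-universe is a complete $H$-universe), we have $\pi^H_*(MU_G\wedge (SU_G)_+)\cong\pi^H_*(MU_H\wedge (SU_H)_+)$. Subgroups of compact abelian Lie groups are again compact abelian, so it suffices to treat $H=G$.

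The key input is a $G$-equivariant cell structure, or filtration, on $SU_G$ whose associated graded is built from Thom spaces over which $MU_G$ is oriented. Non-equivariantly, $\Sigma^\infty_+ SU$ has a stable splitting, or at least a filtration, from the maps $\Sigma\C P^{n}\to SU(n+1)$, and $H_*(SU)$ is the exterior algebra on the images of $H_*(\Sigma \C P^\infty)$. Equivariantly, I would write $SU_G=\colim_V SU(V)$ over complex $G$-representations $V$ in a complete universe. I would use the equivariant suspension map $\Sigma^{\R}\C P(V)_+\to U(V)$, sending $(t,L)$ to the unitary that acts by $e^{2\pi i t}$ on $L$ and by the identity on $L^\perp$, together with its $SU$-variant. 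Taking the multiplicative extension via the product on $SU_G$ gives classes $\lambda_i\in\pi^G_{2i+1}(MU_G\wedge (SU_G)_+)$.

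To define these classes and prove independence, I would use that $MU_G$ is complex oriented in the equivariant sense. For $G$ abelian, $MU_G^*(\C P(\mathcal U))$ is computed by Cole--Greenlees--Kriz, and dually $(MU_G)_*(\C P(\mathcal U)_+)$ is free over $(MU_G)_*=(MU_G)^G_*$ on classes $\beta_0,\beta_1,\dots$ with $|\beta_i|=2i$ (using a flag of characters). The crucial point is that for $G$ abelian every complex representation splits into characters. This should give a filtration of $SU_G$ by sub-$G$-spaces whose filtration quotients are wedges of representation-sphere smash products of Thom spaces. The $MU_G$-homology of each quotient is free, hence the associated spectral sequence (equivariant Atiyah--Hirzebruch or Serre type) collapses. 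We then get $(MU_G)_*(SU_G)$ free over $(MU_G)_*$ with a basis of monomials in the $\lambda_i$. Because the $\lambda_i$ have odd degree and the product is graded commutative ($SU_G$ is an infinite loop $G$-space, so $MU_G\wedge (SU_G)_+$ is $G$-$\mathbb{E}_\infty$), we get $\lambda_i^2=0$ as long as $2$ is handled. For that I would use freeness together with comparison to the underlying non-equivariant computation, which detects $\lambda_i^2=0$ via the free basis.

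An alternative route for the algebra structure is a Rothenberg--Steenrod / bar spectral sequence argument. One uses $SU_G\simeq\Omega (BSU_G)$, with $(MU_G)_*(BSU_G)$ polynomial and flat, so $\Tor$ over it gives an exterior algebra in odd degrees concentrated on one line, and the spectral sequence collapses. I expect the main obstacle to be the flatness and collapse input: showing $(MU_G)_*(BSU_G)$, or the filtration quotients of $SU_G$, are free over $(MU_G)_*$ requires the abelian hypothesis through splitting of representations into characters and Euler classes. Non-abelian $G$ would fail exactly here. Concretely, I would first establish freeness of $(MU_G)_*(BU_G)$ (a known result for abelian $G$ via Cole--Greenlees--Kriz/Sinha), deduce it for $BSU_G$ via the fibration $BSU_G\to BU_G\to \C P(\mathcal U)$-type splitting, and then run the bar spectral sequence.
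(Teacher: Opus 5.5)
Your reduction to $H=G$ by restriction is fine. Your main route also has the right ingredients: an equivariant Yokota-type map $\Sigma\C P(V)\to SU(V)$, Cole's freeness of the $MU_G$-homology of projective spaces, and the splitting of representations into characters. But the step that carries the proof is missing. You never say what the filtration of $SU_G$ is (``this should give a filtration''). Worse, the inference ``the $MU_G$-homology of each quotient is free, hence the spectral sequence collapses'' is a non sequitur. Once the cells are $G$-disks in $V\times\R^s$ with $V$ complex, complex stability makes \emph{every} $E_1$-term free over $(MU_G)_*$, so freeness tells you nothing about differentials. Parity does not help either: the cells $e_{k_1,\dots,k_q}$ have dimension $\sum_j(2k_j-1)$, which takes both parities. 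Already $SU(4)$ has an $8$-cell $e_{3,2}$ next to a $7$-cell $e_4$.

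The idea you need, which the paper supplies, is to get the vanishing by naturality from a space whose $MU_G$-homology is already known. In Yokota's CW structure every cell $e_{k_1,\dots,k_q}$ is the image of the top cell of $\Sigma\C P(W_{k_1})\times\cdots\times\Sigma\C P(W_{k_q})$ under the cellular multiplication map $\Phi$. This map is $G$-equivariant because an abelian $G$ acts on $W_n$ through diagonal matrices, and the Schubert cells are $G$-homeomorphic to unit disks in complex representations \cite[Lemma 5.2]{Wis24}. Cole's computation and the equivariant K\"unneth theorem make the $MU_G$-homology of the domain of $\Phi$ free, and there the top cell carries a genuine class (its attaching map is zero in $MU_G$-homology). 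Pushing forward along $\Phi$ shows that $e_{k_1,\dots,k_q}$ also carries a class, namely $\lambda_{k_1-1}\cdots\lambda_{k_q-1}$. So every cell contributes a permanent cycle, which gives collapse and your monomial basis at once; induction on $n$ over $SU_G(W_n)$ finishes.

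There are two secondary problems. First, your alternative route runs the bar spectral sequence in the wrong direction. $\Tor^{E_*(X)}(E_*,E_*)\Rightarrow E_*(BX)$ computes the homology of a delooping, so $\Tor$ over $(MU_G)_*(BSU_G)$ would compute $(MU_G)_*(B\,BSU_G)$, not $(MU_G)_*(\Omega BSU_G)$. The degrees show this even non-equivariantly: $H_*(BSU)$ is polynomial on classes of degrees $4,6,\dots$, so the resulting exterior classes sit in degrees $5,7,\dots$ and miss $\lambda_1$. (The spectral sequence that computes a loop space is the Eilenberg--Moore one, built from cohomology, with its own convergence issues.) A workable version writes $SU_G\simeq B(BU_G)$, using the identification $\Omega SU_G\simeq BU_G$ already behind \cref{cor:identification-of-ETHH-of-MU_G}. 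You then run the multiplicative bar spectral sequence of \cref{sec:barss} over $(MU_G)_*(BU_G)$, as the paper does for $MU_\R$. This needs the extra input $(MU_G)_*(BU_G)\cong(MU_G)_*[b_1,b_2,\dots]$ with $|b_i|=2i$, as a flat Pontryagin algebra. Collapse then comes from multiplicativity: the generators $\sigma b_i$ lie on the $1$-line, so they are permanent cycles, and the Leibniz rule kills every differential. It does not come from the $E^2$-page lying on one line, which it does not.

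Second, comparison with the underlying computation cannot detect $\lambda_i^2=0$. Write $\lambda_i^2=\sum_I c_I\lambda_I$; restriction only shows that each $c_I$ maps to zero in $MU_*$, and $(MU_G)_*\to MU_*$ kills Euler classes. What works is graded commutativity, giving $2\lambda_i^2=0$, together with torsion-freeness of $(MU_G)_*$, which is free over $MU_*$ for abelian $G$.
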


The proof of \cref{introthm: homotopy of ETHH(MU_G)} relies on an explicit $G$-CW structure for $SU_G$ which we construct in \cref{section: MUG}.  This cell structure is a $G$-equivariant refinement of a CW-structure on $SU$ due to Yokota \cite{Yok56}.

When $G=C_2$, we also consider the Real analogue of complex cobordism, denoted $MU_{\R}$.  This spectrum is crucial to geometric applications of equivariant homotopy. For instance, it plays a central role in Hill, Hopkins, and Ravenel's work on the Kervaire invariant one problem \cite{HHR}. We are able to give a complete description of $\ETHH(MU_{\R})$, as well as its $RO(C_2)$-graded homotopy ring.

\begin{introtheorem}[{\cref{cor-ETHH-MUR,cor: homotopy of ETHHMUR}}]\label{introthm-ETHH-MUR}
    There is an equivalence of $C_2$-$\mathbb{E}_\infty$ ring spectra
    \[
        \mathrm{Res}_{C_2}^{C_2 \times S^1} \mathrm{ETHH}(MU_\mathbb{R}) \simeq MU_\mathbb{R} \wedge \Sigma^\infty _+ B(BU_\mathbb{R})
    \]
    where $BU_{\mathbb{R}}$ denotes the classifying space of stable Real vector bundles.  Consequently, there is an isomorphism 
    \[ 
        \underline{\pi}_\bigstar \Res_{C_2}^{C_2 \times S^1} \ETHH(MU_\R) \cong \Lambda_{(\underline{MU_\R})_\bigstar} (\overline{\lambda}_n | n \geq 1)
    \]
    of $RO(C_2)$-graded Green functors, where $\overline{\lambda}_n$ has degree $n \rho+1$.
\end{introtheorem}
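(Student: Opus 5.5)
The plan is to deduce both parts of the statement from the Thom spectrum description of $MU_\R$. First, present $MU_\R$ as a $C_2$-equivariant Thom spectrum of a sufficiently multiplicative map $BU_\R \to B\mathrm{GL}_1(\mathbb{S}_{C_2})$. This should be an infinite loop map in the Real (i.e. $\rho$-graded) sense, so it deloops to $B(BU_\R)$. Then apply the upgraded compatibility of equivariant factorization homology with Thom spectra, i.e. the same result used to prove \cref{introthm:identification-of-ETHH-of-MU_G}, with $G=C_2$ and the Thom class coming from Real bundles.

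Concretely, the equivariant analogue of the Blumberg--Cohen--Schlichtkrull formula gives
\[
\Res^{C_2\times S^1}_{C_2}\ETHH(Mf)\simeq Mf\wedge \Sigma^\infty_+ B X
\]
for an equivariant $\mathbb{E}_\infty$ Thom spectrum $Mf$ over a group-like $C_2$-$\mathbb{E}_\infty$ space $X$. Here one uses that $\ETHH$ is factorization homology over $S^1$, and that Thom spectra commute with factorization homology: $\int_{S^1} Mf\simeq M(\int_{S^1} f)$ and $\int_{S^1}X\simeq L BX\simeq X\times BX$. The Thom class trivializes the $BX$-factor. Taking $X=BU_\R$ yields the first claim as $C_2$-$\mathbb{E}_\infty$ rings.

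For the homotopy, compute $\underline{\pi}_\bigstar(MU_\R\wedge B(BU_\R)_+)$. Bott periodicity in the Real setting identifies $B(BU_\R)\simeq U_\R$, or its $SU$-analogue. This space has a Real cell structure, built from Yokota's cells as in the $MU_G$ section, with cells in dimensions $n\rho+1$. The first step is to identify $\underline{H}_\bigstar(U_\R;\underline{\Z})$, or better $MU_\R$-homology directly, as an exterior algebra on classes $\overline\lambda_n$ of degree $n\rho+1$. These come from the suspension of the Real cells $\Sigma^{\rho}\C P^{n-1}_\R\to U_\R$ (suspension of $b_n\in MU_{\R\,\star}BU_\R$, of degree $n\rho$, giving degree $n\rho+1$).

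Next, run the $RO(C_2)$-graded Atiyah--Hirzebruch spectral sequence, or argue via a Hopf algebra structure. $MU_\R$ is Real oriented and $U_\R$ is an $H$-space, so $MU_{\R\bigstar}(U_\R)$ is a Hopf algebra. A Milnor--Moore/Borel argument, using the free $MU_{\R\bigstar}$-module on the cells, shows the spectral sequence collapses since all cells are in degrees $n\rho+1$ and the $E_2$-page is free over $(\underline{MU_\R})_\bigstar$. It also shows there are no multiplicative extensions: $\overline\lambda_n^2=0$ by graded commutativity for odd-type classes, together with a check at underlying and geometric fixed points (underlying gives $MU_*\otimes\Lambda$, and geometric fixed points give $MO_*\otimes\Lambda$ over $\F_2$).

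The main obstacle is the squaring relation in the $RO(C_2)$-graded setting. There the sign rule involves $\epsilon$, and $\overline\lambda_n^2$ could a priori be a nonzero multiple involving $a_\sigma$ times other classes. I would handle it by checking on underlying and geometric fixed points, and using that $MU_\R$-modules of this form are detected jointly by these (strong completeness/Hu--Kriz-type freeness).
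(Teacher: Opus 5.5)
Your derivation of the $C_2$-$\mathbb{E}_\infty$ equivalence is the paper's argument (\cref{prop-ETHH-FH,FH-Thom-multiplicative,cor-ETHH-FH-mult}). The ring-structure half, however, cannot be proved. Indeed, the check you propose would refute it.

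\textbf{The exterior relation fails.} You assert that geometric fixed points give $MO_*\otimes\Lambda$. In fact
\[
\Phi^{C_2}\bigl(MU_\R\wedge B(BU_\R)_+\bigr)\simeq MO\wedge B(BO)_+ ,
\]
which is $\THH(MO)$ as in \cref{cor-geomfp-ETHH}. Here $B(BO)\simeq SU/SO$, and its mod $2$ Pontryagin ring has no nilpotents. Along the infinite loop map $SU/SO\to U/O\to BO$, $H^*(BO;\F_2)$ surjects onto $H^*(SU/SO;\F_2)=\Lambda(w_2,w_3,\dots)$. So $H_*(SU/SO;\F_2)$ is a subring of the polynomial ring $H_*(BO;\F_2)$. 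Concretely, if $e_2$ is dual to $w_2$, then $\langle e_2^2,w_4\rangle=\langle e_2\otimes e_2,w_2\otimes w_2\rangle=1$. Hence $\pi_*\Phi^{C_2}\bigl(\Res^{C_2\times S^1}_{C_2}\ETHH(MU_\R)\bigr)\cong MO_*\otimes H_*(SU/SO;\F_2)$ has no nilpotents.

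Since $\Phi^{C_2}$ inverts $a_\sigma$, the map $\pi^{C_2}_\bigstar\to\pi_*\Phi^{C_2}$ is a surjective ring map. If the $\overline\lambda_n$ were square-zero, their images would vanish and the image would be only $MO_*$. Using the additive basis, one sees further that $\overline\lambda_n^2\neq0$ for every $n$ and every choice of lifts.

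So the exterior description is correct on underlying spectra and as $(\underline{MU_\R})_\bigstar$-modules, but not as Green functors. The degrees fit relations of the form $\overline\lambda_n^2=a_\sigma\overline\lambda_{2n+1}+\cdots$, in the spirit of $\tau_i^2=a_\sigma\tau_{i+1}+u_\sigma\xi_{i+1}$ in the $C_2$-dual Steenrod algebra. The paper's proof has the same hole. It infers ``no hidden multiplicative extensions'' from freeness of the $E^\infty$-page of the bar spectral sequence. That freeness does not stop $\overline\lambda_n^2$ from being a nonzero filtration-one class such as $a_\sigma\overline\lambda_{2n+1}$.

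\textbf{Detection principle.} Restriction and $\Phi^{C_2}$ jointly detect equivalences, not individual classes. For example, the image of $\eta$ in $\pi^{C_2}_1MU_\R$ is nonzero, since it maps to $\eta\in\pi_1KO=\pi^{C_2}_1K\R$. Yet it restricts into $\pi_1MU=0$ and has geometric fixed points in $\pi_1MO=0$. Strong completeness does not change this.

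\textbf{Additive part.} The paper gets this cleanly from the Thom isomorphism and Araki's $(\underline{MU_\R})_\bigstar(BU_\R)\cong(\underline{MU_\R})_\bigstar[\overline b_n]$. It then runs the multiplicative bar spectral sequence (\cref{thm-ebar,thm-ebar-multiplicative}), whose $E^2$-generators sit in filtration $1$ and so cannot support differentials. Your Real Yokota structure is only asserted, and there are several problems with it:
\begin{itemize}
\item Complex conjugation is the wrong involution on $SU$: its fixed points $SO$ do not deloop $BO$.
\item The involution with the right fixed points, $A\mapsto A^{T}$, is an anti-automorphism. So there is no reason for Yokota's multiplicatively defined product cells to be permuted.
\item The product cells lie in degrees $\sum_i(n_i\rho+1)$, not $n\rho+1$.
\item Freeness of an $E_2$-page does not by itself force collapse. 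Borel and Milnor--Moore are theorems over fields, not over $(\underline{MU_\R})_\bigstar$.
\end{itemize}
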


This computation uses the multiplicative $G$-equivariant bar spectral sequence (\cref{thm-ebar,thm-ebar-multiplicative}), which may be of independent interest.

\subsection{Outline}

The remainder of the paper is organized as follows. Background material on ETHH, equivariant Thom spectra, and factorization homology can be found \cref{sec: prelims}.  In \cref{sec:barss}, we construct a multiplicative $G$-equivariant bar spectral sequence which we need for computing $\ETHH(MU_\R)$.  The proofs of \cref{introthm-ETHH-MUR,introthm:identification-of-ETHH-of-MU_G,introthm: homotopy of ETHH(MU_G)} are in \cref{sec: ETHH of Thom}.  Finally, we prove \cref{introthm: geo of ETHH hfp,thm:introBokstedt} in \cref{sec: Bokstedt}.

\subsection{Acknowledgments}
We would like to thank Ben Antieau, Teena Gerhardt, Mike Hill, Liam Keenan, Tyler Lawson, Maximilien P\'eroux, J.D. Quigley, and Chase Vogeli for helpful conversations related to this work.  DC and MG were partially supported by NSF grant DMS-2135960. IK is grateful for funding from the NWO-XL grant ``Symmetry on the interface of topology and higher algebra", which supported her travel for collaboration on this project. 
\section{Preliminaries}\label{sec: prelims}
In this section we recall some necessary background material.  
\subsection{\texorpdfstring{Review of $\mathrm{ETHH}$}{Review of ETHH}}

Let $G$ be a finite group. We write $\Spg$ for the category of genuine orthogonal $G$-spectra, and $\Alg(\Spg)$ for the category of associative ring $G$-spectra. We recall the definition of $\mathrm{ETHH}$ from \cite{CGK25}.

\begin{definition}
  Let $R$ be an associative algebra in $\Spg$. The \textit{cyclic bar construction} of $R$, $N^{\cyc}_\bullet(R)$, is a simplicial object given in degree $n$ by $R^{\smsh n+1}$ with face and degeneracy maps as follows:
  \begin{alignat*}{3}
    d_0 &= (\id^{\smsh n-1} \smsh \mu) \circ \tau  \\
    d_i &= (\id^{\smsh i-1}\smsh \mu \smsh \id^{\smsh n-i}) \qquad& 0&< i<n \\
    s_i &= \id^{\smsh i+1} \smsh \eta \smsh \id^{\smsh n-i} \qquad& 0&\leq i<n, 
  \end{alignat*}
  where $\mu$ and $\eta$ are the multiplication and unit in $R$, respectively, and $\tau$ rotates the first factor of $R$ to the end.
\end{definition}

\begin{definition}[{\cite[Definition 3.1]{CGK25}}]
  Let $\mathcal{U}$ be a complete $(G\times S^1)$-universe, and let $\mathcal{V} = i^*_G\mathcal{U}$ be its restriction to a complete $G$-universe.
  Equivariant topological Hochschild homology is the functor $\ETHH\colon\Alg(\Spg) \to \Spg[G\times S^1]$ defined as
  \[ \ETHH(-) = \mathcal{I}^{\mathcal{U}}_{\mathcal{V}}|N^\cyc_\bullet(-)| \]
  where $\mathcal{I}^{\mathcal{U}}_{\mathcal{V}}$ is the change of universe functor (\cite[Section II.1]{LMSM}) and $| \cdot |$ denotes geometric realization.
\end{definition}

\begin{proposition}
    \label{ETHHisCyclicBar}
  Let $R$ be a ring $G$-spectrum. After restricting to a $G$-spectrum, $\ETHH(R)$ agrees with the cyclic bar construction in the category of $G$-spectra:
  \[ \Res^{G\times S^1}_{G\times 1}(R) \we |N^\cyc_\bullet(R)|. \]
\end{proposition}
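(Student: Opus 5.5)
The plan is to unwind the definition of $\ETHH$ and use two compatibilities: restriction commutes with change of universe, and restriction commutes with geometric realization. (The statement should be read as $\Res^{G\times S^1}_{G\times 1}\ETHH(R)\we|N^\cyc_\bullet(R)|$.)

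\textbf{Step 1: write out the definition.} By definition,
\[
\Res^{G\times S^1}_{G\times 1}\ETHH(R)=\Res^{G\times S^1}_{G\times 1}\,\mathcal{I}^{\mathcal{U}}_{\mathcal{V}}\,|N^\cyc_\bullet(R)|,
\]
where $|N^\cyc_\bullet(R)|$ is a $G\times S^1$-spectrum indexed on the universe $\mathcal{V}$. Here $S^1$ acts through the cyclic structure, and $\mathcal{V}$ is regarded as a $G\times S^1$-universe via the projection $G\times S^1\to G$.

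\textbf{Step 2: commute restriction past change of universe.} Restriction along $G\times 1\hookrightarrow G\times S^1$ commutes with change of universe functors: $\Res\circ\mathcal{I}^{\mathcal{U}}_{\mathcal{V}}\cong\mathcal{I}^{i^*\mathcal{U}}_{i^*\mathcal{V}}\circ\Res$. This is a point-set statement, because both functors are computed levelwise on orthogonal spectra or prespectra, as in \cite[II.1]{LMSM}. Since $i^*_G\mathcal{U}=\mathcal{V}$, the change of universe after restriction is $\mathcal{I}^{\mathcal{V}}_{\mathcal{V}}$, which is the identity. If we instead work with the derived functors, we note that $\mathcal{I}^{\mathcal{U}}_{\mathcal{V}}$ on $G\times S^1$-spectra is derived along cofibrant replacement. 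Restriction preserves cofibrant objects, or at least objects of the right homotopy type, so the point-set identity gives the equivalence.

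\textbf{Step 3: commute restriction past realization.} Restriction commutes with geometric realization, since both realization and restriction are computed on underlying spaces and spectra levelwise. Restricted to $G\times 1$, the simplicial object $N^\cyc_\bullet(R)$ is just the cyclic bar construction of $R$ in $\Spg$, with the $S^1$-action forgotten. Combining the three steps gives $\Res^{G\times S^1}_{G\times 1}\ETHH(R)\we|N^\cyc_\bullet(R)|$.

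\textbf{Main obstacle.} The main difficulty is homotopical rather than formal. We need $|N^\cyc_\bullet(R)|$ to have the correct homotopy type, so that the point-set realization computes the derived one; this requires $R$ to be cofibrant, or the simplicial object to be Reedy cofibrant. We also need the change of universe to be applied to a suitably cofibrant object, so that the point-set $\mathcal{I}$ is homotopically meaningful. I would handle this first by cofibrantly replacing $R$ as an algebra, so that its underlying spectrum is cofibrant. I would then check that restriction preserves the relevant cofibrancy, which holds because restricting a free cell $(G\times S^1)/H_+$ to $G$ gives a $G$-CW object.
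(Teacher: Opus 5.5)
Your proposal is correct and is essentially the paper's argument: the paper's one-line proof likewise observes that restriction commutes with every ingredient of the construction (citing strong monoidality and compatibility with geometric realization), and you additionally make explicit what the paper leaves implicit, namely that restriction carries $\mathcal{I}^{\mathcal{U}}_{\mathcal{V}}$ to a change of universe between the same complete $G$-universe, which is (equivalent to) the identity. The cofibrancy concerns you flag are not a real obstacle for this comparison: both sides use the same point-set realization, and restriction preserves all weak equivalences, so the derived change of universe $\mathcal{I}^{\mathcal{U}}_{\mathcal{V}}Q(-)$ restricts to something equivalent to the identity without needing restriction to preserve cofibrant objects.
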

\begin{proof}
  This follows as restriction is (strong) symmetric monoidal and preserves geometric realization.
\end{proof}

\begin{proposition}
  Equivariant topological Hochschild homology preserves commutative algebras, that is, $G$-$\mathbb{E}_\infty$-algebras.
\end{proposition}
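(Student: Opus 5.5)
The plan is to realize $\ETHH$ on a commutative algebra as a tensor with the circle, and then to check that each constituent functor preserves $G$-$\mathbb{E}_\infty$-algebras. Let $R$ be a commutative algebra in $\Spg$, i.e.\ a $G$-$\mathbb{E}_\infty$-algebra, modelled as a strictly commutative orthogonal ring $G$-spectrum indexed on a complete universe. The proof has three steps.

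First, the cyclic bar construction. Since $R$ is commutative, $N^\cyc_\bullet(R)$ is a simplicial object in commutative algebras, and it is isomorphic to the tensor $S^1_\bullet \otimes R$ of $R$ with the simplicial circle $S^1_\bullet = \Delta[1]/\partial\Delta[1]$. Concretely:
\begin{itemize}
\item the $n$-th level $R^{\smsh n+1}$ is the coproduct of $n+1$ copies of $R$ in commutative algebras;
\item the face maps are induced by the multiplication $\mu$ and the twist $\tau$, both of which are maps of commutative algebras.
\end{itemize}
The geometric realization of a simplicial commutative algebra, computed in spectra, agrees with its realization in commutative algebras, because sifted colimits are created by the forgetful functor. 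Hence $|N^\cyc_\bullet(R)| \cong S^1 \otimes R$ is a commutative ring $G$-spectrum. The cyclic structure gives an $S^1$-action through maps of commutative rings, so $|N^\cyc_\bullet(R)|$ is a commutative ring in $(G\times S^1)$-spectra indexed on the $G$-universe $\mathcal{V}$, on which $S^1$ acts naively.

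Second, the change of universe. The functor $\mathcal{I}^{\mathcal{U}}_{\mathcal{V}}$ is strong symmetric monoidal on orthogonal spectra, since it is given by a left Kan extension along the inclusion of indexing categories. It therefore carries commutative ring $G\times S^1$-spectra to commutative ring spectra indexed on $\mathcal{U}$.

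Third, and this is the main obstacle, one must check that the resulting object is a genuine $G$-$\mathbb{E}_\infty$-algebra, with the multiplicative norms, and not merely a naive $\mathbb{E}_\infty$-algebra. Two points need care:
\begin{itemize}
\item Strictly commutative orthogonal $G$-spectra on a complete universe model $G$-$\mathbb{E}_\infty$-rings. This is the input, and it is preserved by steps one and two provided we work with cofibrant or flat models so that the smash powers are homotopically meaningful.
\item The point-set constructions must compute the homotopically correct objects. For this I would invoke the flatness results for commutative orthogonal $G$-spectra in the style of Hill--Hopkins--Ravenel: for cofibrant commutative $R$, the smash powers $R^{\smsh n+1}$ have the correct homotopy type, and realization of the Reedy cofibrant simplicial object is homotopically correct.
\end{itemize}
Together these show that $\ETHH(R)$ is a $G\times S^1$-commutative ring whose restriction to $G$ is a genuine $G$-$\mathbb{E}_\infty$-algebra, which proves the proposition.
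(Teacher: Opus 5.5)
The paper gives no argument of its own here beyond citing \cite[Proposition 3.5]{CGK25}. Your proof is the standard argument behind that result and is correct: for commutative $R$ the cyclic bar construction is a simplicial commutative ring (namely $S^1_\bullet\otimes R$), and geometric realization and $\mathcal{I}^{\mathcal{U}}_{\mathcal{V}}$ are strong symmetric monoidal. Two simplifications are available: for the statement about $\Res^{G\times S^1}_{G\times 1}\ETHH(R)$ your second step is unnecessary, since restriction to $G$ undoes the change of universe (\cref{ETHHisCyclicBar}); and your third step is not a real obstacle, because any strictly commutative orthogonal ring $G$-spectrum on a complete universe already determines a $G$-$\mathbb{E}_\infty$-ring via cofibrant replacement, so the only homotopical input needed is the flatness of cofibrant commutative rings, which ensures the point-set construction computes the derived one.
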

\begin{proof}
  This is \cite[Proposition 3.5]{CGK25}.
\end{proof}

\subsection{Review of equivariant Thom constructions}
To compute $\ETHH$ of the equivariant cobordism spectra $MU_\R$ and $MU_G$ (for $G$ a finite abelian group), we will use the fact that these are equivariant Thom spectra of $G$-$\mathbb{E}_\infty$ maps, and that $\ETHH$ commutes with the equivariant Thom spectrum functor. A point-set model of the equivariant Thom spectra functor was constructed by Lewis and May in X.3.1 of \cite{LMSM}. For our purposes, we will need the equivariant Thom spectrum functor to be $G$-symmetric monoidal, so we will use the model constructed in \cite{HHKWZ24}.

\medskip

We denote the ($G$-category of) $G$-spaces by $\Top _G$, the ($G$-category of) genuine $G$-spectra by $\Spg$, and by $\Pic(\Spg)$ the Picard $G$-space of invertible $G$-spectra. The equivariant Thom spectrum functor takes a map of $G$-spaces $f: X \to \Pic(\Spg)$ and produces a genuine $G$-spectrum $\mathbf{Th}(f)$. We recall:

\begin{theorem}\label{thm-eThom} \cite{HHKWZ24}
    Let $G$ be a finite group. The equivariant Thom spectrum functor 
    $$\mathbf{Th}: \Top _G / \Pic(\Spg) \to \Spg$$ 
    is $G$-symmetric monoidal and commutes with colimits.
\end{theorem}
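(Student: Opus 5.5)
The plan is to build $\mathbf{Th}$ as a composite of functors that are each visibly $G$-symmetric monoidal and colimit-preserving. First, straighten a $G$-space over $\Pic(\Spg)$ to a $G$-functor. A map of $G$-spaces $f\colon X\to \Pic(\Spg)$ corresponds, by parametrized straightening and unstraightening, to a $G$-functor from the $G$-$\infty$-groupoid $\underline{X}$ (the parametrized groupoid $G/H\mapsto X^H$) into $\underline{\Spg}$ whose values are invertible. We then \emph{define} $\mathbf{Th}(f)$ to be the $G$-colimit of this functor, which means the parametrized left Kan extension along $\underline{X}\to *$.

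Next, colimit preservation. $\Top_G/\Pic(\Spg)$ is equivalent to the $G$-$\infty$-groupoid slice of $\underline{\Pic}(\Spg)$. Colimits in this slice are computed on underlying $G$-spaces, so a colimit of the $X_i$ is glued from the $\underline{X_i}$. A left Kan extension over a glued diagram is the colimit of the left Kan extensions over the pieces; this is the parametrized Fubini, or ``colimits commute with colimits'', statement. Hence $\mathbf{Th}$ preserves $G$-colimits, including the fiberwise indexed coproducts.

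Then the monoidality. The $G$-symmetric monoidal structure on the slice is the cartesian product of $G$-spaces over $\Pic$, with the map to $\Pic$ given by the pointwise smash product $\Pic\times\Pic\to\Pic$. The norms $N_H^K$ are given by coinduction $\mathrm{Map}_H(K,X)$, composed with the multiplicative norm on $\Pic$. We need the natural equivalence
\[
\mathbf{Th}(f\times g)\simeq \mathbf{Th}(f)\wedge\mathbf{Th}(g).
\]
It follows because $\wedge$ on $\Spg$ preserves colimits separately in each variable: the colimit over $X\times Y$ of the external smash is the smash of the colimits.

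The main obstacle is the norm compatibility, i.e.\ proving $\mathbf{Th}(N_H^K f)\simeq N_H^K\mathbf{Th}(f)$. The HHR norm is not colimit-preserving but only distributes over colimits. I would first use the distributivity of the Hill--Hopkins--Ravenel norm over sifted colimits and indexed coproducts; this is the ``distributive'' property of $\underline{\Spg}$ in the sense of Nardin--Shah. I would then present $f$ as a colimit of its restrictions to orbits $G/L\times *\to\Pic$, on which both sides are computed directly as norms of invertible spectra. Coherence would be packaged by Nardin--Shah's result that a $G$-distributive $G$-symmetric monoidal structure makes the parametrized colimit functor $\underline{\mathrm{Fun}}(\underline{X},\mathcal C)\to\mathcal C$ $G$-symmetric monoidal. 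This identifies $\mathbf{Th}$ as the composite of such a functor with the symmetric monoidal inclusion of $\Pic$. This is the step I would cite, or reprove, from \cite{HHKWZ24}.
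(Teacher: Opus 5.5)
The paper gives no proof of its own here; it simply imports the result from \cite{HHKWZ24}, and your outline (define $\mathbf{Th}$ as the $G$-colimit, i.e.\ the $G$-colimit-preserving extension of the inclusion $\underline{\Pic}(\Sp_G)\hookrightarrow\underline{\Sp}_G$, and deduce $G$-symmetric monoidality from the $G$-distributivity of $\underline{\Sp}_G$ via Nardin--Shah) is essentially the argument behind that citation. One correction: you must apply the Nardin--Shah input to the total functor $\underline{\mathcal{S}}_{/\underline{\Pic}}\to\underline{\Sp}_G$, i.e.\ to parametrized presheaves on $\underline{\Pic}$ with Day convolution, where $\underline{X}$ varies and the product is external; for a fixed $\underline{X}$ the colimit functor $\underline{\Fun}(\underline{X},\mathcal{C})\to\mathcal{C}$ with the pointwise product is not monoidal, since it already sends the constant functor at the unit to $\Sigma^\infty_+X$ rather than to the sphere.
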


For us, the main use of this theorem will be in the following examples.

\begin{example}\label{ex-MU}
$\, $

    \begin{enumerate}
        \item Let $G = C_2$. The Real complex cobordism spectrum $MU_\R$ is the equivariant Thom spectrum of the Real J-homomorphism $BU_\R \to \Pic(\Spg)$, which is a $G$-$\mathbb{E}_\infty$ map. (See, for example, Remark 13 of \cite{HL}.)
        \item Let $G$ be any finite group. The equivariant complex cobordism spectrum $MU_G$ is the equivariant Thom spectrum of the equivariant J-homomorphism $BU_G \to \Pic(\Spg)$, which is a $G$-$\mathbb{E}_\infty$ map. (See, for example, Example 6.1.53 of \cite{Sch}.)
    \end{enumerate}

\end{example}

The theorem implies that both $MU_{\R}$ and $MU_G$ have the structure of $G$-commutative monoids in $\Sp_G$. Equivalently, they can be modeled by $G$-$\mathbb{E}_{\infty}$-ring spectra.

\subsection{Review of equivariant factorization homology}
In this subsection, we review the basic properties of equivariant factorization homology that we will need in order to compute $\ETHH(MU_\R)$ and $\ETHH(MU_G)$. 

\medskip

Factorization homology is a homology theory of manifolds, taking in an $n$-manifold $M$ equipped with a trivialization of its tangent bundle and an $\mathbb{E}_n$-algebra $A$ in a category $\mathcal{C}$ and producing $\int_M A \in \mathcal{C}$. Factorization homology is symmetric monoidal (in $M$ under disjoint union, and in $A$ under the symmetric monoidal structure in $\mathcal{C}$), functorial (in open embeddings of $n$-manifolds, and in $\mathbb{E}_n$-algebra maps), and satisfies $\otimes$-excision for collar gluings $M = M' \cup_{M_0 \times \R} M''$. Factorization homology can be defined as a left Kan extension.

\begin{example}
    If $R$ is a ring spectrum, then $\int_{S^1} R \simeq \THH (R)$.
\end{example}

In \cite{Hor}, Horev produces a genuine equivariant version of factorization homology, and shows that various equivariant versions of $\THH$ can be studied using genuine equivariant factorization homology. In this paper, we will show that the restriction of $\ETHH$ to a $G$-spectrum is an instance of equivariant factorization homology, and use this to compute the $\ETHH$ of equivariant complex cobordism spectra. The main input we will need is Theorem 3 of \cite{HHKWZ24}:

\begin{theorem}\label{thm-FH-Thom-HKZ}\cite{HHKWZ24}
    Equivariant Thom spectra commute with equivariant factorization homology. That is, if $f: \Omega ^V X \to \Pic(\Spg)$ is a map of $V$-fold loop spaces (for $V$ a finite-dimensional $G$-representation), then 
    $$\int_M \mathbf{Th}(f) \simeq \mathbf{Th}\left( \int_M \Omega^V X \to \int_M \Pic(\Spg) \to \Pic(\Spg)\right)$$
    for every $G$-manifold $M$ equipped with an equivariant isomorphism of bundles $TM \cong M \times V$.
\end{theorem}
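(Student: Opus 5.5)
The plan is to use the description of genuine equivariant factorization homology as a $G$-left Kan extension (Horev \cite{Hor}) and to check that both ingredients of that description commute with the Thom functor, using \cref{thm-eThom}. For a $V$-framed $G$-manifold $M$ and an $\mathbb{E}_V$-algebra $A$ in a $G$-symmetric monoidal $G$-category $\mathscr{C}$, we have
\[
\int_M A \simeq \colim_{U \in \mathrm{Disk}^{G}_{V}/M} A(U).
\]
Here the colimit is a $G$-colimit over the $G$-category of $V$-framed equivariant disks embedded in $M$. The value $A(U)$ on a disk $U = \coprod_i G\times_{H_i} V$ is the $G$-tensor product $\bigotimes_i N_{H_i}^G A$, built from the $G$-symmetric monoidal structure and its norms.

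First, I place everything in the slice $G$-category $\Top_G/\Pic(\Spg)$ with its cartesian (hence $G$-symmetric monoidal) structure. The map of $V$-fold loop spaces $f\colon \Omega^V X\to \Pic(\Spg)$ is an $\mathbb{E}_V$-algebra there, since $\Pic(\Spg)$ is $G$-$\mathbb{E}_\infty$ and so in particular $\mathbb{E}_V$. Next, I compute $\int_M$ of this algebra in the slice. Colimits in a slice are computed in $\Top_G$, and indexed products over the base are formed using the $G$-$\mathbb{E}_\infty$ multiplication on $\Pic(\Spg)$. So the factorization homology in the slice is precisely the composite
\[
\int_M \Omega^V X \to \int_M \Pic(\Spg) \to \Pic(\Spg),
\]
where the second map is the counit identifying $\Pic(\Spg)$, viewed as an $\mathbb{E}_\infty$ object, with its own factorization homology via the augmentation.

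Then I apply $\mathbf{Th}$. By \cref{thm-eThom}, $\mathbf{Th}$ is $G$-symmetric monoidal, so it carries the $\mathbb{E}_V$-algebra $f$ to the $\mathbb{E}_V$-algebra $\mathbf{Th}(f)$. It also identifies the value on each disk, namely $\mathbf{Th}$ of the indexed product of copies of $f$ over the orbits, with the indexed smash product $\bigotimes_i N_{H_i}^G\mathbf{Th}(f)$, naturally in embeddings of disks. Since $\mathbf{Th}$ preserves ($G$-)colimits, it commutes with the colimit over $\mathrm{Disk}^G_V/M$. Comparing the two colimit formulas gives the claimed equivalence.

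I expect the main obstacle to be the coherence in the middle step. One must verify that the functor $\mathrm{Disk}^G_V/M\to \Spg$ obtained by composing with $\mathbf{Th}$ is, as a diagram, equivalent to the one defining $\int_M \mathbf{Th}(f)$. This requires the norms and the $G$-symmetric monoidality to be compatible with the structure maps coming from equivariant embeddings. One also needs that ``preserves colimits'' in \cref{thm-eThom} includes the parametrized $G$-colimits, that is, indexed coproducts such as $G\times_H(-)$, and not only fiberwise ones. My first attempt at this would be to phrase everything $\infty$-categorically, so that $\mathbf{Th}$ is a $G$-symmetric monoidal left adjoint, namely the left Kan extension along the Yoneda embedding of $\Pic(\Spg)\hookrightarrow \Spg$. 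The universal property of Horev's factorization homology, as a $G$-symmetric monoidal left Kan extension, then gives the compatibility formally.
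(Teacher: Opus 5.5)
Your argument is correct, and it is the standard deduction of this result. The paper gives no proof of its own: it cites Theorem 3 of \cite{HHKWZ24}, stating \cref{thm-eThom} just before it for exactly this reason. The route is the one you describe: $\mathbf{Th}$ is a $G$-symmetric monoidal, $G$-colimit-preserving left adjoint, so it commutes with the $G$-symmetric monoidal left Kan extension that defines $\int_M$, and $\int_M$ in the slice over $\Pic(\Spg)$ is computed on underlying $G$-spaces.

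Two phrasings need fixing. First, the $G$-symmetric monoidal structure on $\Top_G/\Pic(\Spg)$ that you need is not the cartesian one of the slice. That structure is the fiber product over $\Pic(\Spg)$, and $\mathbf{Th}$ is not monoidal for it. The right structure combines the product of $G$-spaces with the $G$-$\mathbb{E}_\infty$ multiplication on $\Pic(\Spg)$, which is in fact what your next sentence uses. Second, the map $\int_M \Pic(\Spg)\to\Pic(\Spg)$ is the canonical map coming from that $G$-$\mathbb{E}_\infty$ structure (for instance, the structure map of $\int_M$ of the terminal algebra $\mathrm{id}\colon\Pic(\Spg)\to\Pic(\Spg)$). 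It is not an identification of $\Pic(\Spg)$ with its own factorization homology.
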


Along with Theorem 4 of \cite{HHKWZ24}, which identifies $\int_M \Omega^V X \simeq \mathrm{Map}(M^+, X)$, we will obtain a multiplicative description of $\int_M$ of Thom spectra of $G$-$\mathbb{E}_\infty$ maps, and apply this to $\ETHH$.

\section{An equivariant bar spectral sequence}\label{sec:barss}

In order to compute $\ETHH(MU_\R)$, we will need an equivariant bar spectral sequence. For a genuine $G$-spectrum $E$, let $\underline{E}_\bigstar = \underline{\pi}_\bigstar(E)$ denote its $RO(G)$-graded homotopy Mackey functors, and for a $G$-space $X$, denote $\underline{E}_\bigstar (X) = \underline{\pi}_\bigstar (E \wedge \Sigma^\infty _+ X)$. The theorem below agrees with the Hyper-Tor spectral sequence of Lewis--Mandell (page 2 of \cite{LM}), or the bar spectral sequence of \cite{Hill:Freeness} for $G= C_2$, but we reproduce it here to endow it with additional algebraic structure.

\begin{theorem}\label{thm-ebar} \cite{LM}
    Let $G$ be a finite group, let $X$ be a well-pointed topological $G$-monoid, and let $E$ be a commutative ring $G$-spectrum such that $\underline{E}_\bigstar (X)$ is flat over $\underline{E}_\bigstar$. Then there is a strongly convergent spectral sequence with $E^2$-page
    \[
    \Tor_s ^{\underline{E}_\bigstar(X)}(\underline{E}_\bigstar, \underline{E}_\bigstar) \Rightarrow \underline{E}_{s + \bigstar}(BX).
    \]
\end{theorem}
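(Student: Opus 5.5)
We will build the spectral sequence from the skeletal filtration of the bar construction, exactly as in the non-equivariant case, but keeping track of all subgroups simultaneously by working with $RO(G)$-graded Mackey functors. First we choose a model for $BX$: since $X$ is a well-pointed topological $G$-monoid, the simplicial $G$-space $B_\bullet X$ with $B_n X = X^{\times n}$ is Reedy cofibrant (good), so $BX \simeq |B_\bullet X|$. Smashing with $E$ commutes with realization, so
\[
E\wedge \Sigma^\infty_+ BX \simeq |E\wedge \Sigma^\infty_+ X^{\times \bullet}|.
\]
Filtering by skeleta gives a filtered $G$-spectrum whose associated graded pieces are $\Sigma^s$ of the normalized $s$-simplices. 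Applying $\underline{\pi}_\bigstar$ produces an exact couple of $RO(G)$-graded Mackey functors, and hence a spectral sequence with
\[
E^1_{s,\bigstar} = \underline{E}_\bigstar(X^{\times s}) \Rightarrow \underline{E}_{s+\bigstar}(BX),
\]
whose $d^1$ is the alternating sum of the face maps. Strong convergence follows from the fact that the filtration is exhaustive and bounded below in $s$, with $F_{-1}=0$, and that homotopy Mackey functors commute with the sequential colimit of skeleta.

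Second, we identify the $E^2$-page. Here we use the flatness hypothesis. Since $E$ is commutative, there is a Künneth map
\[
\underline{E}_\bigstar(X)\boxtimes_{\underline{E}_\bigstar}\cdots\boxtimes_{\underline{E}_\bigstar}\underline{E}_\bigstar(X) \to \underline{E}_\bigstar(X^{\times s}),
\]
where $\boxtimes$ is the box product of $RO(G)$-graded $\underline{E}_\bigstar$-modules. Flatness of $\underline{E}_\bigstar(X)$ over $\underline{E}_\bigstar$ makes this an isomorphism. The argument is the equivariant Künneth spectral sequence (Lewis--Mandell): its $E_2$-term is $\Tor^{\underline{E}_\bigstar}$ of the Mackey functors, which collapses to the $\Tor_0$ line under flatness. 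Under this isomorphism the $E^1$-page with $d^1$ becomes the normalized algebraic bar complex $B(\underline{E}_\bigstar, \underline{E}_\bigstar(X), \underline{E}_\bigstar)$. The ring structure on $\underline{E}_\bigstar(X)$ comes from the monoid structure of $X$, and the augmentations come from $X\to *$. Since $\underline{E}_\bigstar(X)$ is flat over $\underline{E}_\bigstar$, this bar complex is a flat resolution computing $\Tor$. Its homology is therefore $\Tor_s^{\underline{E}_\bigstar(X)}(\underline{E}_\bigstar,\underline{E}_\bigstar)$, which gives the claimed $E^2$-page.

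The main obstacle is the Künneth step. The box product of Mackey functors is not exact in general, so we must check two things: that "flat" is interpreted relative to $\boxtimes_{\underline{E}_\bigstar}$ in $RO(G)$-graded Mackey functors, and that the comparison map is an isomorphism at every level $G/H$, not merely on $G$-fixed points. To handle this, we would first prove the Künneth isomorphism for $X$ replaced by free $\underline{E}_\bigstar$-modules built from orbits $G/H_+\wedge S^V$. We would then pass to flat modules using Lazard-type filtered colimits, or simply cite Lewis--Mandell's Künneth theorem. Once this is in place, the remaining identifications are formal.
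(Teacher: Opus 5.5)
Your proposal is correct and follows the paper's proof: take the skeletal filtration of $|B_\bullet X|$ smashed with $E$, use flatness to get the K\"unneth isomorphism $\underline{E}_\bigstar(X^{s}) \cong \underline{E}_\bigstar(X)^{\Box_{\underline{E}_\bigstar} s}$ via Lewis--Mandell, and identify $(E^1, d^1)$ with the algebraic bar complex computing $\Tor$. Your remarks on normalization and on strong convergence fill in details the paper leaves implicit. The ``Lazard-type'' alternative would need real extra work, since a filtered diagram of free modules must be lifted to $E$-modules; citing the Lewis--Mandell K\"unneth spectral sequence, as the paper implicitly does, is the right move.
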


\begin{proof}
    Let $B_\bullet X$ be the simplicial space giving the bar construction on $X$, so that $B_s X = X^s$. Since $\underline{E}_\bigstar (X)$ is flat over $\underline{E}_\bigstar$, we have a K\"unneth isomorphism, so that
    $$\underline{E}_\bigstar(X^s) \cong \underline{E}_\bigstar (X) ^{\Box_{\underline{E}_\bigstar}s}$$
    where the right-hand side is the $s$-fold box product over $E_\bigstar$. 
    
    In other words, $\underline{E}_\bigstar (B_\bullet X) \cong B^{\underline{E}_\bigstar} _\bullet(\underline{E}_\bigstar (X))$
    as simplicial Green functors.

    The skeletal filtration of $BX = |B_{\bullet}X|$, with $F_s / F_{s-1} \cong \Sigma^s (X^s)_+$, gives rise to a strongly convergent spectral sequence
    $$E^1 _{s, \bigstar} = \underline{E}_{s + \bigstar}(\Sigma^s (X^s)_+) \Rightarrow \underline{E}_{s + \bigstar} (BX)$$
    by standard arguments (see, for instance \cite[X.2.9]{EKMM}). Then
    $$E^1 _{s, \bigstar} \cong \underline{E}_\bigstar(X^s) \cong \underline{E}_\bigstar (X) ^{\Box_{\underline{E}_\bigstar} s}$$
    and as in the non-equivariant bar spectral sequence, one checks that the $d_1$ differential is identified under this isomorphism with the differential of the chain complex computing $\Tor_s ^{\underline{E}_\bigstar(X)}(\underline{E}_\bigstar, \underline{E}_\bigstar)$. Therefore, we can identify the $E^2$ page of this spectral sequence with $\Tor_s ^{\underline{E}_\bigstar(X)}(\underline{E}_\bigstar, \underline{E}_\bigstar)$, as required. 
\end{proof}

When $X$ is an $\mathbb{E}_2$-space, $BX$ is an $\mathbb{E}_1$-space, and therefore $\underline{E}_\bigstar (BX)$ is an $RO(G)$-graded Green functor. In this case, we endow the equivariant bar spectral sequence with a multiplicative structure.

\begin{theorem}\label{thm-ebar-multiplicative}
    Suppose in addition to the assumptions of Theorem \ref{thm-ebar} that $X$ is an $\mathbb{E}_2$ $G$-space. Then the equivariant bar spectral sequence is a spectral sequence of $RO(G)$-graded Green functors.
\end{theorem}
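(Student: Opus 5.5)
We would follow the classical non-equivariant argument for multiplicativity of the bar spectral sequence, as in Segal's or Goerss's treatment, transported to $G$-spaces. The basic idea is to make the product on $BX$ compatible with the skeletal filtration, and then show the induced pairing on $E^1$ agrees with the shuffle product on the bar complex.

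First, we replace $X$ by a strictly commutative-enough model. Since $X$ is an $\mathbb{E}_2$ $G$-space (for a trivial-representation $\mathbb{E}_2$ operad), we may rectify it to a topological $G$-monoid $X'$ weakly equivalent to $X$ as an $\mathbb{E}_1$-space. Its product $X'\times X'\to X'$ is then a map of $\mathbb{E}_1$ $G$-spaces, up to coherent homotopy. A cleaner alternative is to use Dunn additivity, $\mathbb{E}_2\simeq \mathbb{E}_1\otimes\mathbb{E}_1$, and model $X$ as a monoid in topological $G$-monoids up to equivalence. Applying $B_\bullet$ levelwise then gives a simplicial map $B_\bullet X\times B_\bullet X\cong B_\bullet(X\times X)\to B_\bullet X$, where the product is taken diagonally in simplicial degree. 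Realization commutes with finite products of $G$-spaces, since $G$ is finite and we work in compactly generated spaces. So we obtain the $\mathbb{E}_1$ product on $BX=|B_\bullet X|$. Via the standard Eilenberg--Zilber comparison, the diagonal realization $|B_\bullet X\times B_\bullet X|$ is homeomorphic to the realization of the bisimplicial product. The skeletal filtrations therefore satisfy $F_s\cdot F_t\subseteq F_{s+t}$, because the product of an $s$-simplex and a $t$-simplex is triangulated into $(s+t)$-simplices. This yields a pairing of filtered $G$-spectra
\[
E\wedge \Sigma^\infty_+ F_s \wedge E\wedge\Sigma^\infty_+ F_t\to E\wedge \Sigma^\infty_+F_{s+t},
\]
using the commutative multiplication on $E$.

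Second, we invoke the standard fact (e.g. Dugger's multiplicative structures on spectral sequences of filtered spectra, or \cite[X.2]{EKMM}) that a pairing of filtered spectra induces a pairing of the associated spectral sequences. The differentials are derivations, and the pairing on $E^\infty$ is compatible with the product on the abutment. We apply this in the category of $G$-spectra, with $RO(G)$-graded homotopy Mackey functors, evaluating at each $G/H$ and using that restrictions and transfers commute with the pairing, since they are induced by maps of spectra. Frobenius reciprocity holds because the pairing comes from a map of $G$-spectra. This makes each page an $RO(G)$-graded Green functor and each $d_r$ a map of modules satisfying the Leibniz rule.

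Third, we identify the $E^1$ pairing. It is induced by $\Sigma^s(X^s)_+\wedge\Sigma^t(X^t)_+\to \Sigma^{s+t}(X^{s+t})_+$, which on the top cell is the sum over $(s,t)$-shuffles with signs. Under the Künneth isomorphism of \cref{thm-ebar}, this becomes the shuffle product on the bar complex $B^{\underline{E}_\bigstar}_\bullet(\underline{E}_\bigstar(X))$. That bar complex is a commutative DG Green functor precisely because $\underline{E}_\bigstar(X)$ is commutative, which follows from $X$ being $\mathbb{E}_2$. Passing to homology gives the standard product on $\Tor$.

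We expect the main obstacle to be the first step: producing a strictly associative, filtration-compatible product on $|B_\bullet X|$ from only an $\mathbb{E}_2$ structure, in a way that is genuinely $G$-equivariant. The careful point is that the homeomorphism between realizations of products and products of realizations, and the Eilenberg--Zilber triangulation, are $G$-equivariant. They are, since $G$ acts levelwise through simplicial maps.
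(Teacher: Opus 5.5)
Your proposal is correct and follows essentially the same route as the paper's proof. A levelwise product makes $B_\bullet X$ a simplicial object in $\mathbb{E}_1$ $G$-spaces, and the skeletal filtration of $BX$ is then multiplicative; your shuffle triangulation of $\Delta^s\times\Delta^t$ is exactly why. The spectral sequence of \cref{thm-ebar} therefore inherits a pairing whose $E^1$-term is the shuffle product on the bar complex under the K\"unneth isomorphism, and your bookkeeping for restrictions, transfers and Frobenius reciprocity fills in what the paper leaves implicit.

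One caution about your first step. The ``cleaner alternative'' cannot be taken literally: a strict monoid in topological $G$-monoids is commutative by Eckmann--Hilton. Likewise, a merely homotopy-coherent $\mathbb{E}_1$-map $X\times X\to X$ does not give a strict simplicial map $B_\bullet(X\times X)\to B_\bullet X$. The paper reads ``$B_\bullet X$ is a simplicial object in $\mathbb{E}_1$ $G$-spaces'' homotopy-coherently, where geometric realization and its skeletal filtration are compatible with products, and that is the right way to close the step you flagged.
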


\begin{proof}
    If $X$ is an $\mathbb{E}_2$ $G$-space, then the multiplication on $X$ is an $\mathbb{E}_1$-map, and therefore $B_\bullet X$ is a simplicial object in $\mathbb{E}_1$ $G$-spaces, with the multiplication given levelwise by the multiplication on $X$. By the K\"unneth isomorphism, $B_\bullet (\underline{E}_\bigstar (X))$ is a simplicial object in $RO(G)$-graded Green functors. Upon taking the normalized chain complex, we obtain the shuffle product on the bar complex. This is the $E_1$-page of the spectral sequence. Since the simplicial filtration of $BX$ respects its multiplicative structure, all the differentials in the spectral sequence respect the multiplicative structure as well, so this is a spectral sequence of $RO(G)$-graded Green functors.
    \end{proof}

The bar spectral sequence is closely related to the B\"{o}kstedt spectral sequence. Another approach to $\ETHH(MU_\mathbb{R})$ would have been to develop a B\"okstedt spectral sequence for $\ETHH$, whose $E_2$-page is given by the equivariant Hochschild homology of Mehrle--Quigley--Stahlhauer \cite{MQS24,MQS25}.

\section{\texorpdfstring{$\mathrm{ETHH}$ of Thom spectra}{ETHH of Thom spectra}}\label{sec: ETHH of Thom}

In this section, we will view $\mathrm{ETHH}$ through the lens of genuine equivariant factorization homology \cite{Hor}. This will allow us to make conclusions about the multiplicative structure of $\mathrm{ETHH}$ of certain Thom spectra.

\begin{proposition}\label{prop-ETHH-FH}
    Let $A$ be a ring $G$-spectrum which is cofibrant as a $G$-spectrum, and let $S^1$ denote the circle on which $G$ acts trivially. Then
    $$\mathrm{Res}_{G}^{G \times S^1} \mathrm{ETHH}(A) \simeq  \int_{S^1} A.$$
\end{proposition}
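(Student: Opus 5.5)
The plan is to combine the identification of the restricted $\ETHH$ with the cyclic bar construction (\cref{ETHHisCyclicBar}) with the standard computation of factorization homology over $S^1$ via a simplicial model of the circle. The argument adapts the non-equivariant proof that $\int_{S^1} A \simeq \THH(A)$ to Horev's genuine equivariant framework. Since $G$ acts trivially on $S^1$, the framing $TS^1 \cong S^1 \times \R$ uses the trivial one-dimensional representation. Consequently the relevant disk category is just the non-equivariant one, parametrized over the orbit category.

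First, \cref{ETHHisCyclicBar} gives $\Res^{G\times S^1}_{G\times 1}\ETHH(A) \simeq |N^{\cyc}_\bullet(A)|$, computed in $\Spg$. The cofibrancy hypothesis on $A$ ensures the iterated smash products $A^{\smsh n+1}$ are homotopically meaningful, so that the point-set cyclic bar construction computes the derived (i.e.\ $\infty$-categorical) smash powers in $\Spg$. It also ensures that the geometric realization is a homotopy colimit, since the simplicial object is Reedy cofibrant given the well-pointed unit. So the left side is the $\infty$-categorical colimit of the simplicial object $[n]\mapsto A^{\otimes n+1}$ in $\Sp_G$.

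Second, I would compute $\int_{S^1} A$ using $\otimes$-excision, exactly as non-equivariantly. Write $S^1$ as a collar gluing of two intervals along $S^0\times\R$. Since all these manifolds carry trivial $G$-action, Horev's factorization homology on them reduces to: $\int_{\R}A\simeq A$, $\int_{S^0\times \R}A \simeq A\smsh A^{op}$ as an algebra, and the two intervals are $A$ viewed as an $A\smsh A^{op}$-module. Excision gives
\[
\int_{S^1}A \simeq A\smsh_{A\smsh A^{op}} A,
\]
with the relative tensor product formed in $\Sp_G$. This relative tensor product is the two-sided bar construction $|B_\bullet(A, A\smsh A^{op}, A)|$. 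The standard simplicial identification then rewrites it as the cyclic bar construction $|N^{\cyc}_\bullet(A)|$; this identification is purely formal in any presentably symmetric monoidal $\infty$-category, in particular $\Sp_G$. Alternatively, I could invoke the equivariant analogue of the fact that factorization homology over a trivial-action manifold is the non-equivariant factorization homology valued in the symmetric monoidal category $\Sp_G$.

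The main obstacle is ensuring that Horev's genuine equivariant factorization homology, when restricted to manifolds with trivial $G$-action and $\mathbb{E}_1$-algebras in $\Sp_G$ with trivially-framed structure, agrees with ordinary factorization homology valued in the symmetric monoidal $\infty$-category $\Sp_G$. In particular, excision must hold there with the relative tensor products computed by bar constructions. I would address this by checking that Horev's $G$-disk category for $G$-trivial framed $1$-manifolds is the constant $G$-category on the usual $\mathrm{Disk}^{fr}_1$, so that the defining left Kan extension is the non-equivariant one. A secondary point is matching the point-set model with the $\infty$-categorical one, which is where the cofibrancy assumption is used.
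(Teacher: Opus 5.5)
Your proposal is correct and follows essentially the same route as the paper: $\otimes$-excision identifies $\int_{S^1}A$ with the two-sided bar construction $B(A, A\wedge A^{op}, A)$ (the paper simply cites the proof of Proposition 7.2.2 of Horev for this step), and this is then matched with the cyclic bar construction, hence with $\mathrm{Res}^{G\times S^1}_{G}\ETHH(A)$ via \cref{ETHHisCyclicBar}. Your extra remarks on cofibrancy and on reducing to trivial-action disks spell out points the paper leaves implicit.
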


\begin{proof}
    By the excision property of equivariant factorization homology, $\int_{S^1} A \simeq B(A, A \wedge A^{op}, A)$ (see, e.g., the proof of Proposition 7.2.2 of \cite{Hor}.) This is also equivalent to the cyclic bar construction, $\mathrm{Res}_{G}^{G \times S^1} \mathrm{ETHH}(A)$ (\cref{ETHHisCyclicBar}).
\end{proof}

The following is a multiplicative version of a theorem from \cite{HHKWZ24}. Let $V, W, U$ be finite-dimensional $G$-representations.

\begin{theorem}\label{FH-Thom-multiplicative}
    Let $A$ be the $G$-Thom spectrum of an $\mathbb{E}_{V \oplus W \oplus U}$-map,
  $$\Omega^{V \oplus W \oplus U} f: \Omega^{V \oplus W \oplus U}X \to \Pic(\Spg),$$
  with $\pi_k( \Omega^U X^H) = 0$ for all subgroups $H \subset G$ and $k < \dim ((V \oplus W)^H)$.
  Let $M$ be a smooth $G$-manifold of the same dimension as $V$. Suppose that $M \times W$ embeds equivariantly in $V \times W$, and that there is an equivariant embedding from the unit disk $D(V) \hookrightarrow M$ (call its image $D$). Then there is an equivalence of $\mathbb{E}_U$-algebras

$$\int_{M \times W \times U} A \simeq A \wedge \Sigma^\infty_+ \mathrm{Map}_*(M^+ - D, \Omega^{W \oplus U} X).$$
\end{theorem}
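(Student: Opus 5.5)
We will combine \cref{thm-FH-Thom-HKZ} with Theorem 4 of \cite{HHKWZ24}, keeping track of the residual $\mathbb{E}_U$-structure throughout. The first step is to reorganize the factorization homology. Since $A$ is an $\mathbb{E}_{V\oplus W\oplus U}$-algebra, the $\otimes$-excision and Fubini property of equivariant factorization homology give an equivalence of $\mathbb{E}_U$-algebras
\[
\int_{M\times W\times U} A \simeq \int_{M\times W} A,
\]
where the right-hand side is computed with $A$ regarded as an $\mathbb{E}_{V\oplus W}$-algebra in $\mathbb{E}_U$-algebras. Here we use that $U$ is contractible, so integrating over $U$ simply remembers the $\mathbb{E}_U$-structure. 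By \cref{thm-eThom}, the Thom spectrum functor is $G$-symmetric monoidal, so it sends $\mathbb{E}_U$-monoids over $\Pic(\Spg)$ to $\mathbb{E}_U$-algebras. Applying \cref{thm-FH-Thom-HKZ} to the manifold $M\times W$, framed by $V\oplus W$, we then get an equivalence of $\mathbb{E}_U$-algebras
\[
\int_{M\times W} A \simeq \mathbf{Th}\Big(\int_{M\times W}\Omega^{V\oplus W}(\Omega^U X)\to \Pic(\Spg)\Big).
\]
To make this multiplicative, we check that the equivalence of \cref{thm-FH-Thom-HKZ} is natural in the coefficient algebra and compatible with the symmetric monoidal structure. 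This is plausible because both sides are left Kan extensions and $\mathbf{Th}$ preserves colimits and tensor products.

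Next, Theorem 4 of \cite{HHKWZ24} (nonabelian Poincaré duality) identifies $\int_{M\times W}\Omega^{V\oplus W}Y \simeq \mathrm{Map}_c(M\times W, Y)$ with $Y=\Omega^U X$. This is where the connectivity hypothesis $\pi_k(\Omega^U X^H)=0$ for $k<\dim((V\oplus W)^H)$ is used. The identification is natural in $Y$ as an $\mathbb{E}_U$-space, so it holds as $\mathbb{E}_U$-spaces over $\Pic(\Spg)$. Compactly supported maps out of $M\times W$ are the same as $\mathrm{Map}_*(M^+, \Omega^W Y)$, that is, $\mathrm{Map}_*(M^+,\Omega^{W\oplus U}X)$.

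We then split off the disk. The cofiber sequence $D^+\simeq S^V \to M^+\to M^+/D$ gives a fiber sequence of $\mathbb{E}_U$-spaces
\[
\mathrm{Map}_*(M^+ - D\text{ (rel. boundary)},\Omega^{W\oplus U}X)\to \mathrm{Map}_*(M^+,\Omega^{W\oplus U}X)\to \Omega^{V\oplus W\oplus U}X.
\]
The embedding of $M\times W$ into $V\times W$ gives a collapse map $S^{V\oplus W}\to (M\times W)^+$. This yields a section of the restriction map, compatible with the $\mathbb{E}_U$-structure since everything is built from maps into $\Omega^U X$. Using the group-like structure coming from the $W$-loops, the sequence then splits as a product of $\mathbb{E}_U$-spaces,
\[
\mathrm{Map}_*(M^+,\Omega^{W\oplus U}X)\simeq \Omega^{V\oplus W\oplus U}X\times \mathrm{Map}_*(M^+-D,\Omega^{W\oplus U}X).
\]
Under this splitting, the map to $\Pic(\Spg)$ restricts to $\Omega^{V\oplus W\oplus U}f$ on the first factor and is null on the second. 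The reason is that the composite to $\Pic$ factors through evaluation near the disk, via the integration map $\int_M\Pic\to\Pic$. Applying the symmetric monoidal functor $\mathbf{Th}$ to this product then gives $A\wedge\Sigma^\infty_+\mathrm{Map}_*(M^+-D,\Omega^{W\oplus U}X)$ as $\mathbb{E}_U$-algebras, because the Thom spectrum of a trivial map is $\Sigma^\infty_+$ of the base.

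The main obstacle is this splitting step. We must show that the product decomposition and the trivialization of the second factor over $\Pic(\Spg)$ are $\mathbb{E}_U$-maps, not merely maps of spaces. I would first try to realize both as coherent constructions on $\mathbb{E}_U$-algebras in spaces over $\Pic$: the section comes from the embedding $M\times W\hookrightarrow V\times W$, and the null-homotopy comes from the $\mathbb{E}_{V\oplus W}$-structure of $f$. Functoriality in $X$ as an $\mathbb{E}_U$-object would then give the coherence.
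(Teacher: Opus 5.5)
Your outline follows the paper's route: Thom spectra commute with factorization homology, then non-abelian Poincar\'e duality, then split off the disk using the Pontryagin--Thom collapse of $M\times W\hookrightarrow V\times W$, then use monoidality of $\mathbf{Th}$. However, the splitting step, which you rightly call the crux, is set up backwards, and the mechanism you give for it would not work. The disk does not give a map $S^V\to M^+$; it gives the collapse $M^+\to S^V$. Also, since $D$ is contractible, $M^+/D\simeq M^+$, so your displayed sequence is not a fiber sequence. The correct cofiber sequence is $\Sigma^W(M^+-\mathrm{int}\,D)\xrightarrow{i}\Sigma^W M^+\to S^{V\oplus W}$. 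Mapping into $Y=\Omega^U X$, the \emph{fiber} is $\Omega^{V\oplus W}Y$ and the \emph{base} is $\mathrm{Map}_*(M^+-D,\Omega^{W\oplus U}X)$.

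The Pontryagin--Thom collapse $c\colon S^{V\oplus W}\to\Sigma^W M^+$ of the embedding then gives a \emph{retraction} $c^*$ of the fiber inclusion, because the composite $S^{V\oplus W}\to\Sigma^W M^+\to S^{V\oplus W}$ is homotopic to the identity. Hence $(c^*,i^*)$ is directly an equivalence onto the product, and no group-like structure is needed. That matters, since $W$ need not contain a trivial summand, so ``$W$-loops'' do not supply one in general. Moreover, $c^*$ and $i^*$ are both precomposition maps into the $U$-fold loop space $\Omega^U X$, so $(c^*,i^*)$ is automatically an equivalence of $\mathbb{E}_U$-spaces.

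The second missing piece is the identification of the map to $\Pic(\Spg)$ on the product. It is not ``evaluation near the disk'': compactly supported maps are covariant in open embeddings, so there is no restriction map $\mathrm{Map}_*(M^+,Z)\to\Omega^V Z$. Likewise, the null-homotopy on the second factor does not come from the $\mathbb{E}_{V\oplus W}$-structure of $f$.

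The paper instead applies naturality of non-abelian Poincar\'e duality to the open embedding $M\times W\times U\hookrightarrow V\times W\times U$, together with $\int_{V\times W\times U}\Pic(\Spg)\simeq\Pic(\Spg)$. This identifies the composite $\mathrm{Map}_*(\Sigma^W M^+,\Omega^U X)\to\Pic(\Spg)$ as exactly $\Omega^U f_*\circ c^*$. Since $c^*$ is the first coordinate of the splitting, the map to $\Pic(\Spg)$ is literally $\Omega^U f_*\circ pr_1$. The second factor is therefore null as an $\mathbb{E}_U$-map by construction, and $\mathbf{Th}$ of $g\circ pr_1$ is $\mathbf{Th}(g)\wedge\Sigma^\infty_+(\text{second factor})$ by monoidality. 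This is where the hypothesis that $M\times W$ embeds in $V\times W$ is actually used, and with it the coherence problem you anticipate goes away.
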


The proof below is adapted from the corresponding proof in \cite{HHKWZ24}, while keeping track of $\mathbb{E}_U$-algebra structures.

\begin{proof}
    Denote the equivariant embedding by $emb: M \times W \hookrightarrow V \times W$.
  Let $M \times W$ be $(V \oplus W)$-framed as a submanifold of $V \times W$, and similarly let $M \times W \times U$ be framed as a submanifold of $V \times W \times U$. Consider the following commutative diagram, where the top horizontal map is an equivalence by Theorem 4.0.1 of \cite{HHKWZ24}.
$$\xymatrix{
  \int_{M \times W \times U} \Omega^{V \oplus W \oplus U} X \ar[r]^-{\sim} \ar[d]^-{(\Omega^{V \oplus W \oplus U} f )_*} &  \mathrm{Map}_*(\Sigma^W (M^+), \Omega ^U X) \ar[d]^-{\Omega ^U f_*} \\
  \int_{M \times W \times U} \Pic(\Spg) \ar[r] \ar[d]^-{emb_*} & \mathrm{Map}_*(\Sigma^W(M^+), B^{V \oplus W} \Pic(\Spg)) \ar[d]^-{emb_*} \\
  \int_{V \times W \times U} \Pic(\Spg) \ar[r] \ar[d]^-\sim & \mathrm{Map}_*(S^{V \oplus W}, B^{V \oplus W} \Pic(\Spg)) \ar[d]^-\sim \\
  \Pic(\Spg) \ar[r]^-= & \Pic(\Spg)
}$$

Since the $G$-Thom spectrum is $G$-symmetric monoidal and respects factorization homology, the $G$-Thom spectrum of the left hand vertical composite is equivalent to $\int_{M \times W \times U} A $ as an $\mathbb{E}_{W \oplus U}$-algebra. The $\mathbb{E}_{W \oplus U}$-algebra structure on $\int_{M \times W \times U}$ comes from embeddings of $\coprod W \times U$ into $W \times U$.

\medskip

Note also that the non-abelian Poincar\'e duality equivalence (Theorem 4.0.1) $\int_M \Omega ^V X \simeq \mathrm{Map}_*(M^+, X)$ is natural in open embeddings in the manifold variable, and therefore the top horizontal equivalence
$$\int_{M \times W \times U} \Omega^{V \oplus W \oplus U} X \simeq \mathrm{Map}_*( \Sigma ^{V \oplus W \oplus U} M^+, X) \simeq \mathrm{Map}_*(\Sigma^W (M^+), \Omega ^U X)$$
is an equivalence of $\mathbb{E}_{W \oplus U}$-algebras. Thus $\int_{M \times W \times U} A $ is equivalent to the Thom spectrum of the right hand vertical composite as an $\mathbb{E}_{W \oplus U}$-algebra. Note that this composite is also equal to
$$\xymatrix{
\mathrm{Map}_*(\Sigma^W(M^+),  \Omega^U X) \ar[r]^-{emb_*} & \mathrm{Map}_*(S^{V \oplus W}, \Omega^U X) \ar[r]^-{\Omega^U f_*} & \mathrm{Map}_*(S^{V \oplus W}, B^{V \oplus W} \Pic(\Spg)) \simeq \Pic(\Spg).
}$$

The map $emb_*$ above is induced by the embedding $emb: M \times W \hookrightarrow V \times W$, equivalently by the Pontryagin-Thom collapse map associated to it, $S^{V \oplus W} \to \Sigma^W(M^+)$. This is a map of $U$-fold loop spaces, and therefore of $\mathbb{E}_U$-algebras. We have an inclusion of a small disk $D \cong V$ in $M$, and the cofiber sequence
$$\Sigma^W(M^+ -D) \overset{\Sigma^W i}{\longrightarrow}  \Sigma^W(M^+) \longrightarrow \Sigma^W S^V \cong S^{V \oplus W}$$
is split (up to homotopy) by this Pontryagin-Thom collapse map, as the composite collapse $(V \times W)^+ \to (M \times W)^+ \to (D \times W)^+ \cong (V \times W)^+$ is homotopic to the identity. Thus we have an equivalence of $U$-fold loop spaces
$$\xymatrix{
  (emb_*, i^*): \mathrm{Map}_*(\Sigma^W(M^+), \Omega ^U X) \ar[r]^-{\sim}
  & \mathrm{Map}_*(S^{V \oplus W}, \Omega ^U X) \times \mathrm{Map}_*(\Sigma^W(M^+ -D), \Omega ^U X) \ar[d]^-\sim \\
  & \mathrm{Map}_*(S^{V \oplus W}, \Omega ^U X) \times \mathrm{Map}_*(M^+ - D, \Omega^{W \oplus U} X).
}$$
Furthermore, this equivalence fits in the following commutative diagram of $\mathbb{E}_U$-algebras:
$$\xymatrix{
\mathrm{Map}_*(\Sigma^W(M^+), \Omega^U X) \ar[r]^-{(emb_*, i^*)} \ar[r]_-{\sim} \ar[d]^{emb_{*}} & \mathrm{Map}_*(S^{V \oplus W}, \Omega^U X) \times \mathrm{Map}_*(M^+ - D, \Omega^{W \oplus U} X) \ar[d]^{pr_1} \\
\mathrm{Map}_*(S^{V \oplus W}, \Omega ^U X) \ar[r]^-= \ar[d]^{\Omega^U f_*} & \mathrm{Map}_*(S^{V \oplus W}, \Omega^U X) \ar[d]^-{\Omega^U f_*} \\
\mathrm{Map}_*(S^{V \oplus W}, B^{V \oplus W} \Pic(\Spg)) \ar[r]^-= & \mathrm{Map}_*(S^{V \oplus W}, B^{V \oplus W} \Pic(\Spg)).
}$$
We have shown that $\displaystyle\int_{M \times W \times U}A$ is equivalent as an $\mathbb{E}_U$-algebra to the $G$-Thom spectrum of the left hand vertical composite, thus it is also equivalent as an $\mathbb{E}_U$-algebra to the $G$-Thom spectrum of the right hand vertical composite. This is equivalent as an $\mathbb{E}_U$-algebra to the $G$-Thom spectrum of $\Omega^{V \oplus W \oplus U} f$ smashed with the Thom spectrum of the null map $\mathrm{Map}_*(M^+ - D, \Omega^{W \oplus U} X) \to \Pic(\Spg)$. Thus it is equivalent as an $\mathbb{E}_U$-algebra to $A \wedge \Sigma^\infty_+ \mathrm{Map}_*(M^+ - D, \Omega^{W \oplus U} X)$, and we can conclude.
\end{proof}

We can now deduce that $\ETHH$ of the $G$-Thom spectrum of a $G$-$\mathbb{E}_\infty$ map is $G$-$\mathbb{E}_\infty$. Furthermore, 

\begin{corollary}\label{cor-ETHH-FH-mult}
    Suppose that $A = \mathbf{Th}(\Omega f)$ for an equivariant  infinite loop map $\Omega f: \Omega X \to \Pic(\Spg)$, where $X$ is $G$-connected. Then
    $$\mathrm{Res}_{G}^{G \times S^1} \mathrm{ETHH}(A) \simeq A \wedge \Sigma^\infty _+ X$$
    as $G$-$\mathbb{E}_\infty$ ring spectra.
\end{corollary}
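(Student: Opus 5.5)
We will combine \cref{prop-ETHH-FH} with \cref{FH-Thom-multiplicative} in the case $M = S^1$ with trivial action, so that $V = \R$ is the trivial one-dimensional representation. Since $\Omega f$ is an equivariant infinite loop map, we may deloop: write $X \simeq \Omega^{W\oplus U} X'$ for a suitable $G$-space $X'$ and arbitrarily large representations $W, U$. Then $\Omega f = \Omega^{\R \oplus W \oplus U} f'$ is an $\mathbb{E}_{\R\oplus W\oplus U}$-map, and $A = \mathbf{Th}(\Omega f)$. We take $W = 0$ (or any $W$ with $S^1 \times W \hookrightarrow \R^2 \times W$, e.g.\ $W$ containing a trivial summand) so that the embedding hypothesis $M \times W \hookrightarrow V\times W$ holds. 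Concretely, we replace $V$ by $\R$ and use $S^1\times\R \hookrightarrow \R^2$, so that the manifold in the theorem is $S^1\times W\times U$ with $W\supseteq\R$. The unit disk $D(\R)$ embeds in $S^1$. The connectivity hypothesis $\pi_k(\Omega^U X'^H) = 0$ for $k<\dim((V\oplus W)^H)$ amounts to connectivity of the delooping $B^{W} X$ on fixed points. This follows from $X$ being $G$-connected together with the deloopings being highly connected on fixed points, which holds for genuine deloopings of an infinite loop space.

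Applying \cref{FH-Thom-multiplicative}, we get an $\mathbb{E}_U$-equivalence
\[
\int_{S^1\times W\times U} A \simeq A\wedge\Sigma^\infty_+\mathrm{Map}_*\bigl((S^1)^+ - D, \Omega^{W\oplus U}X'\bigr).
\]
Here $(S^1)^+ - D$ is an interval with its endpoints identified with the basepoint, i.e.\ $S^1$, so the mapping space is $\Omega\Omega^{W\oplus U}X' \simeq \Omega^{W\oplus U}\Omega X'$. With the indexing chosen so that $\Omega^{W\oplus U}X'$ models $BX$, this mapping space is $\Omega BX \simeq X$, using that $X$ is $G$-connected so that group completion is unnecessary on all fixed points. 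We must be careful to match the indexing: the bookkeeping should show that the space appearing is exactly $X$.

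On the left-hand side, because $A$ is $G$-$\mathbb{E}_\infty$, $\int_{S^1\times W\times U}A \simeq \int_{S^1}A$ as $\mathbb{E}_U$-algebras: factorization homology over $M\times\R^n$ of an algebra restricted along $\mathbb{E}_{n+1}\to\mathbb{E}_\infty$ is computed over $M$. By \cref{prop-ETHH-FH}, $\int_{S^1}A \simeq \Res^{G\times S^1}_G\ETHH(A)$. The result is an $\mathbb{E}_U$-equivalence for every finite $U$, compatibly as $U$ grows through a complete universe.

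The main obstacle is upgrading this compatible family of $\mathbb{E}_U$-equivalences to a $G$-$\mathbb{E}_\infty$ equivalence. I would pass to the colimit over $U$ in the $G$-$\mathbb{E}_\infty$ operad $\mathrm{colim}_U\mathbb{E}_U$, using that all identifications in the proof of \cref{FH-Thom-multiplicative} are natural in $U$. Here $\mathbf{Th}$ is $G$-symmetric monoidal by \cref{thm-eThom}, the splitting $(emb_*, i^*)$ is a map of $U$-fold loop spaces for each $U$, and the identification of $\int_{S^1}$ with the $\mathbb{E}_\infty$ tensor $A\otimes S^1$ is natural as well. If that naturality proves delicate, the fallback is a direct argument. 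Since $A$ is $G$-$\mathbb{E}_\infty$, $\Res\ETHH(A)\simeq A\otimes S^1$ in $G$-commutative rings. Moreover, $\mathbf{Th}$, being $G$-symmetric monoidal and colimit preserving, commutes with tensoring by $S^1$. We would then identify $(\Omega X)\otimes S^1$ over $\Pic$ with the free-loop decomposition $\Omega X\times X$, where the map to $\Pic$ factors through the projection to $\Omega X$, yielding $A\wedge\Sigma^\infty_+X$ multiplicatively.
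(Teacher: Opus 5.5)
Your main line is the paper's: \cref{prop-ETHH-FH}, the invariance $\int_{S^1}A\simeq\int_{S^1\times W}A$ for $G$-$\mathbb{E}_\infty$ $A$, and \cref{FH-Thom-multiplicative} with $M=S^1$, $V=W=\R$ and arbitrary $U$. However, one step is wrong as written: the identification of the mapping space.

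\textbf{The mapping space.} Since $S^1$ is compact, $(S^1)^+$ is $S^1$ with a \emph{disjoint} basepoint. Removing the open disk $D$ therefore leaves $[0,1]_+$, not a circle. This is consistent with $(S^1)^+/((S^1)^+-D)\cong S^1=S^V$, as the cofiber sequence in the proof of \cref{FH-Thom-multiplicative} requires. Hence $\mathrm{Map}_*\bigl((S^1)^+-D,\,Y\bigr)=\mathrm{Map}([0,1],Y)\simeq Y$, and no loop appears. With your original indexing $X'=B^{W\oplus U}X$, you have $\Omega^{W\oplus U}X'\simeq X$ and $\Omega^{\R\oplus W\oplus U}X'\simeq\Omega X$, which is what the theorem needs for $A=\mathbf{Th}(\Omega f)$. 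The space is then $X$ on the nose.

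\textbf{The reindexing.} Your repair, re-choosing the indexing so that $\Omega^{W\oplus U}X'$ models $BX$, contradicts that hypothesis. It would make the source of the Thom map $\Omega^{\R\oplus W\oplus U}X'\simeq\Omega BX\simeq X$ rather than $\Omega X$. So your two errors cancel only by changing the input of the theorem midway. Fix the topology of $(S^1)^+-D$ and drop the reindexing.

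\textbf{Where $G$-connectedness enters.} In the corrected bookkeeping, $G$-connectedness of $X$ is not used for a group-completion step. It is used precisely for the connectivity hypothesis: with $W=\R$ one has $(\Omega^U X')^H\simeq B(X^H)$, and $\pi_1 B(X^H)\cong\pi_0(X^H)$ must vanish. Also, the required embedding is $S^1\times W\hookrightarrow V\times W=\R\times W$, so $W=0$ fails while $W=\R$ works.

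\textbf{From $\mathbb{E}_U$ to $G$-$\mathbb{E}_\infty$.} The paper's proof ends with ``an equivalence of $\mathbb{E}_U$-algebras for any $U$, as required'' and does not address compatibility in $U$. Your concern is legitimate and goes beyond what the paper does: independently chosen $\mathbb{E}_U$-equivalences do not by themselves assemble into a $G$-$\mathbb{E}_\infty$ equivalence.

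\textbf{Your fallback.} This is a genuinely different route, and the cleaner one for the stated multiplicative conclusion. It runs as follows. First, $\Res^{G\times S^1}_G\ETHH(A)\simeq A\otimes S^1$ in $G$-commutative rings. Second, $\mathbf{Th}$ induces a colimit-preserving functor on $G$-commutative algebras by \cref{thm-eThom}, hence commutes with $-\otimes S^1$. Third, $(\Omega X)\otimes S^1\simeq\Omega X\times B\Omega X\simeq\Omega X\times X$ over $\Pic(\Spg)$, with the structure map factoring through the projection; here $G$-connectedness gives $B\Omega X\simeq X$. This bypasses \cref{FH-Thom-multiplicative} entirely and produces a $G$-$\mathbb{E}_\infty$ equivalence directly. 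The cost is that you need an equivariant McClure--Schw\"anzl--Vogt identification of the cyclic bar construction of a $G$-commutative ring with $A\otimes S^1$.
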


\begin{proof}
    By Proposition \ref{prop-ETHH-FH}, $\mathrm{Res}_{G}^{G \times S^1} \mathrm{ETHH}(A) \simeq \int_{S^1} A$. Note that, since $A$ is $G$-$\mathbb{E}_\infty$, $\int_{S^1} A \simeq \int_{S^1 \times W} A $ for any representation $W$. This is because $\int_{- \times W} A$ and $\int_{-} A$ both satisfy the axioms of genuine equivariant factorization homology of \cite{Hor}, and therefore are equivalent. In particular, embed $S^1 \times \mathbb{R}$ in $\mathbb{R}^2$; then by Theorem \ref{FH-Thom-multiplicative}, for any representation $U$,
    $$\int_{S^1 \times \mathbb{R} \times U} A \simeq A \wedge \Sigma^\infty _ + \mathrm{Map}_*(S^1 _+ - D, \Omega^{ \mathbb{R} \oplus U} B^{\mathbb{R} \oplus U} X) \simeq A \wedge \Sigma^\infty _ + X$$
    as $\mathbb{E}_U$-algebras. Therefore, for any representation $U$, $\int_{S^1} A \simeq A \wedge \Sigma^\infty_+ X$ as $\mathbb{E}_U$-algebras, as required.
\end{proof}

We now turn to some computations.

\subsection{\texorpdfstring{The computation of $\ETHH(MU_\mathbb{R})$}{The computation of ETHH(MU\_R)}}
In this subsection we consider $\ETHH(MU_\mathbb{R})$, where $MU_{\mathbb{R}}$ denotes the Real bordism spectrum from \cref{ex-MU}(1). 
\begin{theorem}\label{cor-ETHH-MUR}
    There is an equivalence of $C_2$-$\mathbb{E}_\infty$ ring spectra
    \[
        \mathrm{Res}_{C_2}^{C_2 \times S^1} \mathrm{ETHH}(MU_\mathbb{R}) \simeq MU_\mathbb{R} \wedge \Sigma^\infty _+ B(BU_\mathbb{R})
    \]
\end{theorem}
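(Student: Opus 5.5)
The plan is to deduce the statement directly from \cref{cor-ETHH-FH-mult}, applied with $G=C_2$, $A=MU_\R$ and $X=B(BU_\R)$. By \cref{ex-MU}(1), $MU_\R=\mathbf{Th}(J_\R)$, where $J_\R\colon BU_\R\to\Pic(\Spg[C_2])$ is the Real $J$-homomorphism, a $C_2$-$\mathbb{E}_\infty$ map. Two hypotheses then need checking. First, $J_\R$ should be of the form $\Omega f$ for an equivariant infinite loop map $f\colon B(BU_\R)\to B\Pic(\Spg[C_2])$. Second, $B(BU_\R)$ should be $C_2$-connected.

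For the first point, I will use that $BU_\R$ is a grouplike $C_2$-$\mathbb{E}_\infty$ space, i.e.\ a genuine $C_2$-infinite loop space. It is grouplike because its underlying space $BU$ and its fixed points $(BU_\R)^{C_2}\simeq BO$ are both grouplike. Delooping along the trivial representation $\R$ then gives a $C_2$-infinite loop space $B(BU_\R)$ with $\Omega B(BU_\R)\simeq BU_\R$ as $C_2$-$\mathbb{E}_\infty$ spaces. Since $\Pic(\Spg[C_2])$ is also grouplike $C_2$-$\mathbb{E}_\infty$, the equivariant infinite loop map $J_\R$ deloops to $f=BJ_\R$, and $\Omega f\simeq J_\R$. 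Here I use the equivalence between grouplike $G$-$\mathbb{E}_\infty$ spaces and connective genuine $G$-spectra, under which $J_\R$ corresponds to a map of connective spectra and $f$ is its suspension.

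For the second point, both $(B(BU_\R))^e\simeq B(BU)$ and $(B(BU_\R))^{C_2}\simeq B\big((BU_\R)^{C_2}\big)\simeq B(BO)$ are connected. This uses that $B$ along the trivial representation commutes with taking fixed points, and that a delooping of a grouplike space is connected. So $X$ is $C_2$-connected.

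Feeding these into \cref{cor-ETHH-FH-mult} gives $\Res^{C_2\times S^1}_{C_2}\ETHH(MU_\R)\simeq MU_\R\wedge\Sigma^\infty_+B(BU_\R)$ as $C_2$-$\mathbb{E}_\infty$ ring spectra. Since $MU_\R$ is $C_2$-$\mathbb{E}_\infty$, it is in particular a ring $C_2$-spectrum; if \cref{prop-ETHH-FH} requires it, I will replace it by a cofibrant model without changing either side. The main obstacle is the first point: making precise that the Real $J$-homomorphism is a genuine equivariant infinite loop map, so that it deloops, and not merely an $\mathbb{E}_\infty$ map of underlying spaces. I would first try to obtain this by presenting $J_\R$ as a map of $C_2$-spectra of units, or of $C_2$-symmetric monoidal categories of Real vector spaces, as in \cite{HL} and \cite{HHKWZ24}.
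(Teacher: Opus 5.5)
Your proposal is correct and takes the same route as the paper. The paper's proof is a one-line application of \cref{cor-ETHH-FH-mult} to the Real $J$-homomorphism $BU_\R \to \Pic(\Spg[C_2])$, whose $C_2$-$\mathbb{E}_\infty$ structure it takes from \cref{ex-MU}(1), with $X = B(BU_\R)$. Your delooping argument and your check that $B(BU_\R)$ is $C_2$-connected (its underlying space is $B(BU)$ and its fixed points are $B(BO)$) simply spell out hypotheses the paper leaves implicit.
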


\begin{proof}
    This follows from Corollary \ref{cor-ETHH-FH-mult}, along with the fact that $MU_\mathbb{R}$ is the $G$-Thom spectrum of a $G$-$\mathbb{E}_\infty$ map $BU_\mathbb{R} \to \Pic(\Spg)$.
\end{proof}

\begin{corollary}\label{cor: homotopy of ETHHMUR}
    There is an isomorphism 
    \[ 
        \underline{\pi}_\bigstar \Res_{C_2}^{C_2 \times S^1} \ETHH(MU_\R) \cong \Lambda_{(\underline{MU_\R})_\bigstar} (\overline{\lambda}_n | n \geq 1)
    \]
    of $RO(C_2)$-graded Green functors, where $\overline{\lambda}_n$ has degree $n \rho+1$ and $\rho$ denotes the regular representation.
\end{corollary}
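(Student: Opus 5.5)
By \cref{cor-ETHH-MUR}, it suffices to compute $\underline{\pi}_\bigstar(MU_\R\wedge \Sigma^\infty_+ B(BU_\R))$ as an $RO(C_2)$-graded Green functor. We would run the multiplicative equivariant bar spectral sequence of \cref{thm-ebar,thm-ebar-multiplicative} with $E = MU_\R$ and $X = BU_\R$. Here $BU_\R$ is a $C_2$-$\mathbb{E}_\infty$ space, so in particular an $\mathbb{E}_2$ $C_2$-space, and it can be rigidified to a well-pointed topological $C_2$-monoid with $B(BU_\R)$ as its classifying space.

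The first input is the Real analogue of the classical computation of $MU_*(BU)$. Since $MU_\R$ is Real oriented, we have
\[
\underline{(MU_\R)}_\bigstar(BU_\R) \cong \underline{(MU_\R)}_\bigstar[\overline{b}_1,\overline{b}_2,\dots], \qquad |\overline{b}_n| = n\rho.
\]
To prove this, we would first compute $\underline{(MU_\R)}_\bigstar(\C P^\infty_\R)$ as a free module on classes $\overline{\beta}_n$ in degrees $n\rho$. This uses the Real orientation together with the Real cell structure of $\C P^\infty$, whose cells are $D(n\rho)$, so that the Atiyah--Hirzebruch (cellular) spectral sequence collapses. We would then pass to $BU_\R$ by the standard Real splitting-principle argument, as in Hu--Kriz or Hill--Hopkins--Ravenel. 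This yields a polynomial, hence free and flat, $\underline{(MU_\R)}_\bigstar$-algebra, so \cref{thm-ebar} applies.

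With this input, the $E^2$-page is
\[
\Tor^{\underline{(MU_\R)}_\bigstar[\overline{b}_n]}_{*}\bigl(\underline{(MU_\R)}_\bigstar,\underline{(MU_\R)}_\bigstar\bigr) \cong \Lambda_{\underline{(MU_\R)}_\bigstar}(\sigma\overline{b}_n \mid n\ge 1),
\]
computed via the Koszul resolution, which works in Green functors because the algebra is free on generators induced from $C_2/C_2$-level classes. The suspension classes $\overline{\lambda}_n = \sigma\overline{b}_n$ lie in filtration $1$, so they have degree $n\rho+1$. The page is generated multiplicatively by filtration $\le 1$ classes, and differentials $d_r$ with $r\ge 2$ vanish on filtration $\le 1$ for degree reasons: $d_r$ lowers filtration by $r$ and would land in negative filtration. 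Since the spectral sequence is multiplicative by \cref{thm-ebar-multiplicative}, the Leibniz rule forces it to collapse at $E^2$.

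It remains to resolve extensions. Because $B(BU_\R)$ is a $C_2$-$\mathbb{E}_\infty$ space, the target is graded-commutative, so each odd-degree class satisfies $\overline{\lambda}_n^2 = 0$ in the $RO(C_2)$-graded sense. Lifting the $E^\infty$ generators to homotopy classes then gives a map from the exterior algebra to the abutment that is an isomorphism on associated gradeds, and the filtration is finite in each degree, so this map is an isomorphism of Green functors.

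The main obstacle is this square-zero step. $RO(C_2)$-graded commutativity involves a sign given by a unit in $\underline{\pi}_0$ of the sphere, not necessarily $-1$, so one must check that $\overline{\lambda}_n^2=0$ really holds and is not merely $2\overline{\lambda}_n^2=0$. We would try first to realize $\overline{\lambda}_n$ as the image of a class under the suspension map $\Sigma BU_\R\to B(BU_\R)$, so that its square factors through a diagonal on a suspension and hence vanishes. If that fails, we would check $\overline{\lambda}_n^2=0$ on the underlying level (the classical $\THH(MU)$ computation) and on geometric fixed points ($\Phi^{C_2}MU_\R = MO$, where $2=0$), and conclude using that $\underline{(MU_\R)}_\bigstar$ is detected jointly by these two.
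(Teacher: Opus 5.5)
\textbf{There is a genuine gap at the square-zero step, and it cannot be closed: at level $C_2/C_2$ the relation $\overline{\lambda}_n^2=0$ is false.}

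Up to that step your argument is the paper's. You reduce to $MU_\R\wedge\Sigma^\infty_+B(BU_\R)$, run the multiplicative bar spectral sequence, compute $E^2$ with a Koszul complex, and get collapse from filtration. Obtaining $(\underline{MU_\R})_\bigstar(BU_\R)$ from the Real orientation, rather than from the Thom isomorphism and Araki, is fine. You are right that the crux is whether $\overline{\lambda}_n^2=0$. The paper simply asserts there are no hidden multiplicative extensions because the $E^\infty$-page is free.

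Neither of your fixes works. Strategy 1 conflates cup and Pontryagin products. You can take $\overline{\lambda}_n$ to be the image of the suspension $\sigma\overline{b}_n$ under the $1$-skeleton $\Sigma BU_\R\to B(BU_\R)$. But its square is formed with the product on $B(BU_\R)$, not with a diagonal, and mod $2$ one has $(\sigma x)^2=\sigma(Q^{|x|+1}x)$, which need not vanish.

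Strategy 2 fails for two reasons.
\begin{enumerate}
\item Restriction and $\Phi^{C_2}$ do not jointly detect $(\underline{MU_\R})_\bigstar$. For a generator $\overline{v}_1\in\pi^{C_2}_\rho MU_\R$, the class $a_\sigma\overline{v}_1\in\pi^{C_2}_1MU_\R$ is nonzero (it is detected in the homotopy fixed point spectral sequence). Yet it restricts to $0$ and maps to $\pi_1MO=0$.
\item ``$2=0$'' gives no control over squares of odd classes, because graded commutativity is vacuous in characteristic $2$ (compare $\eta^2\neq0$).
\end{enumerate}

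In fact the square is nonzero, and geometric fixed points detect it. Since $\Phi^{C_2}$ is symmetric monoidal, it gives a ring map
\[
\pi^{C_2}_\bigstar\bigl(MU_\R\wedge\Sigma^\infty_+B(BU_\R)\bigr)\to\pi_*\bigl(MO\wedge\Sigma^\infty_+B(BO)\bigr)\cong\pi_*\THH(MO),
\]
sending degree $V$ to degree $\dim V^{C_2}$. Composing with the Thom class gives a ring map $\psi$ to $H_*(B(BO);\mathbb{F}_2)$, whose product comes from Whitney sum on $BO$.

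This target has no nilpotents. Apply $\Omega^\infty$ to Wood's cofiber sequence $\Sigma ko\xrightarrow{\eta}ko\to ku$. This identifies $U/O$ with the fiber of $BO\to BU$, and $B(BO)\simeq SU/SO$ with its $1$-connected cover. So $\Omega^\infty\eta$ gives an infinite loop map $B(BO)\to BO$. It is injective on mod $2$ homology because
\[
H^*(BO;\mathbb{F}_2)\to H^*(U/O;\mathbb{F}_2)\cong\mathbb{F}_2[w_i]/(w_i^2)\to H^*(SU/SO;\mathbb{F}_2)
\]
is onto. Hence $H_*(B(BO);\mathbb{F}_2)$ is a subring of $\mathbb{F}_2[b_1,b_2,\dots]$; it is in fact polynomial on generators of degrees $2,3,5,7,\dots$.

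Now $\psi(\overline{\lambda}_1)=\sigma b_1$ is the nonzero class $x_2\in H_2(B(BO);\mathbb{F}_2)$, which maps to $b_1^2\in H_2(BO)$. Therefore $\psi(\overline{\lambda}_1^2)=x_2^2\mapsto b_1^4\neq0$, so $\overline{\lambda}_1^2\neq0$. No other choice of lift helps, since $\psi$ kills $\pi^{C_2}_{\rho+1}(MU_\R)\cdot 1$.

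So your argument, like the paper's, proves the additive isomorphism and the exterior ring structure at $C_2/e$. At $C_2/C_2$ there are hidden extensions that restriction cannot see. Degree considerations and the relation $(\sigma b_n)^2=\sigma b_{2n+1}$ in $H_*(B(BO);\mathbb{F}_2)$ point to $\overline{\lambda}_n^2=a_\sigma\overline{\lambda}_{2n+1}+\cdots$. The statement, as an isomorphism of Green functors with an exterior algebra, cannot be established as written.
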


\begin{proof}
    By \cref{cor-ETHH-MUR}, this is the same thing as computing the homology graded Green functor
    \[ 
        (\underline{MU_\R})_\bigstar(BBU_\R) . 
    \]
    To start with, the Thom isomorphism  implies
    \[ 
        (\underline{MU_\R})_\bigstar(BU_\R) \cong (\underline{MU_\R})_\bigstar(MU_\R) \cong (\underline{MU_\R})_\bigstar[\overline{b}_n | n \geq 1]
    \]
    where $\overline{b}_n$ has degree $n \rho$.  This last step is due to Araki \cite{Araki}; see also the proof of Theorem 4.28 of \cite{Hill:Freeness}. It follows that $(\underline{MU_\R})_\bigstar(BU_\R)$ is free as a Green functor over $(\underline{MU_\R})_\bigstar$, hence flat by \cite[Theorem A]{HMQ}, so that \cref{thm-ebar} applies. Consequently there is a spectral sequence 
    \[
        \mathrm{Tor}_*^{\underline{MU_\R}_\bigstar(BU_\R)}(\underline{MU_\R}_\bigstar, \underline{MU_\R}_\bigstar) \Rightarrow \underline{MU_\R}_{*+\bigstar}(BBU_\R)
    \]
    of $RO(C_2)$-graded Green functors.

    Using the Koszul complex as in \cite[proof of Proposition 4.3.3]{AGHKK22}, we compute 
    \[
        \mathrm{Tor}_*^{\underline{MU_\R}_\bigstar[\overline{b}_n | n \geq 1]}(\underline{MU_\R}_\bigstar, \underline{MU_\R}_\bigstar) \cong \Lambda_{(\underline{MU_\R})_\bigstar}(\overline{\ell}_n | n \geq 1) 
    \]
    where $\overline{\ell}_n$ has $(\Z,RO(C_2))$-bidegree $(1,n \rho)$. The differential $d_r$ of the spectral sequence sends an element of $(\Z,RO(C_2))$-bidegree $(s,t)$ to an element of bidegree $(a-r,t+r-1)$. Since the spectral sequence is zero whenever the first bidegree is negative, each $\overline{\ell}_n$ is a permanent cycle. Since the spectral sequence is multiplicative, it collapses at the $E^2$-page. 
    
    Finally, since the spectral sequence is one of $\underline{MU_\R}_\bigstar$-modules and the $E^\infty$ page is free over $\underline{MU_\R}_\bigstar$, there are no nontrivial additive extensions, so we obtain an additive isomorphism 
    \[ 
        \Lambda_{(\underline{MU_\R})_\bigstar} (\overline{\lambda}_n | n \geq 1) \cong (\underline{MU_\R})_\bigstar (BBU_\R)
    \]
    in which $\overline{\lambda}_n$ lifts $\overline{\ell}_n$. Since the left-hand side is a free polynomial $(\underline{MU_\R})_\bigstar$-algebra on exterior generators $\overline{\lambda}_n$, there are no hidden multiplicative extensions, so the $E^\infty$ page gives the computation.
\end{proof}

\subsection{\texorpdfstring{The computation of $\ETHH(MU_G)$}{The computation of ETHH(MU\_G)}}\label{section: MUG}

In this subsection we compute the homotopy ring $\underline{\pi}_*(\Res^{G \times S^1}_G \ETHH(MU_G))$ when $G$ is a finite abelian group. The case of $G$ the trivial group originally appeared in \cite{MS93}, where the authors credit Andy Baker and Larry Smith for their argument. We give a new argument which specializes to an alternate proof of the computation when $G$ is trivial; ideologically it is a consequence of the identification $\THH(MU) \cong MU \wedge SU_+$ of \cite{BCS10} and the cellular structure of \cite{Yok56}.

When $G$ is nontrivial, the computation is a new result. Our argument in this case requires new ideas beyond ($G$-equivariant versions of) those of Baker--Smith and Yokota---namely, the technique of complex representation disk cellular structures utilized in \cite{Wis24}.

Since $MU_G$ is the $G$-Thom spectrum of a $G$-$\mathbb{E}_\infty$-map $\Omega SU_G \to \Pic(\Spg)$ (see Theorem 2.5.41 of \cite{Sch} for the identification $\Omega SU_G \simeq BU_G$), we obtain the following.

\begin{theorem}\label{cor:identification-of-ETHH-of-MU_G}
    Let $G$ be a finite group. There is an equivalence
    \[
        \mathrm{Res}_{G}^{G \times S^1} \mathrm{ETHH}(MU_G) \simeq MU_G \wedge \Sigma^\infty _+ SU_G
    \]
    of $G$-$\mathbb{E}_\infty$ ring spectra.
\end{theorem}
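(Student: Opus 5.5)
The plan is to deduce the statement directly from \cref{cor-ETHH-FH-mult}, applied with $X = SU_G$, in parallel with the proof of \cref{cor-ETHH-MUR}. Concretely, we will verify the three hypotheses of that corollary for $A = MU_G$. First, $MU_G$ must be the $G$-Thom spectrum $\mathbf{Th}(\Omega f)$ of a map $\Omega f\colon \Omega X \to \Pic(\Spg)$. Second, this map must be an equivariant infinite loop map. Third, $X$ must be $G$-connected. The conclusion of \cref{cor-ETHH-FH-mult} then reads $\Res^{G\times S^1}_G \ETHH(MU_G)\simeq MU_G\wedge\Sigma^\infty_+ SU_G$ as $G$-$\mathbb{E}_\infty$ ring spectra, which is exactly the claim.

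For the first two hypotheses, we use \cref{ex-MU}(2): $MU_G$ is the Thom spectrum of the equivariant $J$-homomorphism $BU_G\to\Pic(\Spg)$, and this map is $G$-$\mathbb{E}_\infty$. We combine this with the equivalence $\Omega SU_G\simeq BU_G$ of \cite[Theorem 2.5.41]{Sch}. This is the equivariant Bott periodicity statement for the stable unitary group, in which $SU_G$ is built from a complete complex $G$-universe. We then need this equivalence to be one of $G$-$\mathbb{E}_\infty$ spaces, or at least to be compatible with the deloopings. Equivalently, $SU_G$ should be identified with the $G$-space $B(BU_G)$ arising from the $G$-$\mathbb{E}_\infty$ structure (that is, via the genuine $G$-infinite loop space machine), so that the $J$-homomorphism is the loops on a map $f\colon SU_G\to B\Pic(\Spg)$. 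Given this, \cref{thm-eThom} shows that $MU_G\simeq\mathbf{Th}(\Omega f)$ as $G$-$\mathbb{E}_\infty$ rings.

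For $G$-connectedness, we need $(SU_G)^H$ to be connected for every $H\subseteq G$. The fixed points of the equivariant unitary group on a complete universe decompose as products of unitary groups over the irreducible $H$-representations. Taking fixed points of the special unitary group imposes the determinant-one condition. Alternatively, connectedness follows from $\pi_0 (SU_G)^H\cong\pi_0 B(BU_G)^H$, which vanishes for any delooping. The delooping viewpoint is cleaner, so we will use it.

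We expect the main obstacle to be the second step: ensuring that the identification $\Omega SU_G\simeq BU_G$ is an equivalence of $G$-$\mathbb{E}_\infty$ spaces, as opposed to merely $G$-spaces or $\mathbb{E}_1$-spaces. The cleanest route is to take $X:=B^{\mathbb{R}}BU_G$, the delooping coming from the genuine $G$-$\mathbb{E}_\infty$ structure. With this choice, \cref{cor-ETHH-FH-mult} applies formally. We would then use equivariant Bott periodicity only to identify this $X$ with $SU_G$ as $G$-spaces. That is all the statement requires, since the $G$-$\mathbb{E}_\infty$ structure on $\Sigma^\infty_+ X$ is transported along this identification.
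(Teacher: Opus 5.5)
Your proposal is correct and is essentially the paper's argument: the paper also gets the theorem by applying \cref{cor-ETHH-FH-mult} with $X=SU_G$, using that $MU_G$ is the Thom spectrum of the $G$-$\mathbb{E}_\infty$ $J$-homomorphism on $BU_G\simeq\Omega SU_G$ \cite[Theorem 2.5.41]{Sch}; your attention to $G$-connectedness and to the $\mathbb{E}_\infty$-compatibility of the Bott equivalence makes explicit what the paper leaves implicit. One caution: passing from $\Omega SU_G\simeq BU_G$ to $SU_G\simeq B(BU_G)$ already requires each $(SU_G)^H$ to be connected, so the delooping argument is circular and you need the direct fixed-point check instead (there $(SU(V))^H$ is the kernel of $\det$ on $U(V)^H\cong\prod_i U(n_i)$, which is connected because the trivial summand makes $\det$ surjective on $\pi_1$).
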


From here, the idea is to compute $\ETHH(MU_G)$ by constructing a nice cellular filtration on $SU_G$. More precisely, we will observe that Yokota's cellular structure is compatible with the $G$-action, and determines a cellular structure on $SU_G$ in which the cells are products of intervals and unit disks in complex $G$-representations. Now $MU_G$ is complex stable: if $V$ is a complex $G$-representation, then $S^V \wedge MU_G$ is equivalent to $S^{\mathrm{dim}_\R(V)} \wedge MU_G$. The argument proceeds by observing then that Yokota's cellular structure is hand-crafted to ease homology computations, and that this remains true in our $G$-equivariant situation.

From here on out, $G$ is a fixed compact abelian Lie group. We make this assumption for two reasons. First, it implies that the Mackey functor homotopy groups $\underline{\pi}_*(MU_G)$ are concentrated in even degrees \cite{Lof73,Com96}. Second, it implies that every complex $G$-representation is a direct sum of complex dimension one $G$-representations. By a complex $G$-representation $V$, we mean a complex inner product space equipped with a group homomorphism $G \rightarrow U(V)$ from $G$ to the unitary group acting on $V$.

\begin{definition}
     Define the $G$-space $SU_G(V)$ as the special unitary group which acts on $V$ equipped with the $G$-action by conjugation in $SU(V) \subset U(V)$. Define $SU_G$ as the colimit of the $SU_G(V)$ over all inclusions of finite-dimensional $G$-representations.
\end{definition}

We emphasize that $SU_G(V)$ is just notation for a particular $G$-action on $SU(\mathrm{dim}_\C(V))$.

Choose a sequence of one-dimensional $G$-representations $V_i$ so that $\oplus_{i=1}^\infty V_i$ is a complete $G$-universe. Equivalently, the sequence $V_1, V_2, \dots$ contains each (isomorphism class of an) irreducible $G$-representation infinitely many times. Writing $W_n = \oplus_{i=1}^n V_i$, it follows that we have an identification
\[
    SU_G \cong \colim_n SU_G(W_n) . 
\]

Yokota constructs a cellular structure on non-equivariant $SU(n)$ \cite{Yok56}. We recall the essential details here. To start with, Yokota defines an injective map 
\[ 
    f_n : \Sigma^1 \C P(\C^n) \rightarrow SU(n) 
\] 
as follows. Letting $\theta$ denote the suspension coordinate and 
\[ 
    x = [x_1 : \cdots : x_n] \in \C P(\C^n) 
\]
with $(x_1,\dots,x_n)$ chosen to be a point on the unit sphere in $\C^n$, define
\[ 
    f_n(\theta,x) := VW 
\] 
where 
\[
    W= \begin{bmatrix} 
    \exp(-2i\theta) & 0 \\
    0 & \mathrm{Id}_{n-1}
    \end{bmatrix} 
\] 
and 
\[ 
    V = 
    \left[
    \mathrm{Id}_n - 2 \exp(-i \theta) \cos \theta 
    \begin{pmatrix}
        x_1 \\ 
        x_2 \\
        \vdots \\
        x_n
    \end{pmatrix}
    \begin{pmatrix}
        \overline{x_1} & \overline{x_2} & \dots & \overline{x_n}
    \end{pmatrix}
    \right] .
\]
Using the matrix determinant lemma 
\[ 
    \mathrm{det}(A+\mathbf{u}\mathbf{v}^T) = \mathrm{det}(A)+\mathbf{v}^T \mathrm{adj}(A)\mathbf{u} ,
\] 
where $\mathrm{adj}$ denotes the adjugate matrix of $A$, one may check that $f_n(\theta,x)$ is indeed a special unitary matrix.

\begin{remark}
    We recall the CW-structure on complex Grassmannians given by Schubert cells. The Schubert cells of $\mathrm{Gr}_k(V)$ are determined by a choice of complete flag for $V$, that is, a sequence of subspaces $0 = V_0 \subset \dots \subset V_n = V$ with $\mathrm{dim}_{\mathbb{C}}(V_i) = i$. We will only require the case $k = 1$, so $\mathrm{Gr}_k(V) = \C P(V)$. The $i$th Schubert cell of $\C P(V)$ is defined as the set
    \[ 
        \{ \ell \in \C P(V) \mid \ell \subset V_i, \ell \nsubseteq V_{i-1} \} . 
    \]
    In other words, it is the subset $\C P(V_i) - \C P(V_{i-1})$ of $\C P(V)$, and it is a cell of dimension $2(i-1)$.
\end{remark}

\begin{theorem}[Yokota]\label{thm:Yokota}
    $SU(n)$ has a finite CW-complex structure with the following properties:
    \begin{enumerate}
        \item the inclusion $SU(n-1) \rightarrow SU(n)$ is cellular and preserves the notation used for each cell,
        \item $f_n$ is cellular and the image of the top cell of $\Sigma^1 \C P(\C^n)$ is the ``primitive" cell $e_n$ of dimension $2n-1$,
        \item the remaining cells $e_{k_1,k_2,\dots,k_q}$ are given by the composition
        \[ 
            e_{k_1} \times \dots \times e_{k_q} \rightarrow SU(n) \times \dots \times SU(n) \xrightarrow{\mathrm{mult.}} SU(n) 
        \]
        in which the first map is the product of the primitive cell inclusion maps; $n\geq k_1>k_2>\dots k_q\geq 2$ (and there is also a zero cell $e_0$).
    \end{enumerate}
\end{theorem}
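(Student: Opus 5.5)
The plan is to build the CW structure by induction on $n$, using Yokota's characteristic map $f_n$ and the fibration $SU(n-1)\to SU(n)\to S^{2n-1}$. Here $SU(n)\to S^{2n-1}$ is evaluation of a matrix on the last basis vector, or the first, depending on conventions. I first define the primitive cell $e_n$ as the image under $f_n$ of the top cell of $\Sigma^1\C P(\C^n)$. That top cell is the open $(2n-1)$-cell $(0,\pi)\times(\C P^{n-1}-\C P^{n-2})$. Next I verify that $f_n$ is injective on the reduced suspension, where the basepoint goes to the identity at $\theta=0,\pi$. Injectivity comes from recovering $\theta$ and the line $x$ from the eigenstructure of the reflection-type matrix $V$, normalized by the choice of the first coordinate. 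This gives an embedding $\Sigma\C P^{n-1}\hookrightarrow SU(n)$. By induction, $f_n$ restricted to $\Sigma\C P^{n-2}$ agrees with $f_{n-1}$ composed with the inclusion, provided we use the flag with $\C^{n-1}$ spanned by the first $n-1$ coordinates. This compatibility gives property (1) for the primitive cells.

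For property (3), I consider the map $e_{k_1}\times\cdots\times e_{k_q}\to SU(n)$ with $n\ge k_1>\cdots>k_q\ge2$, together with $e_0$. The goal is to show that the products of these cells give a decomposition of $SU(n)$ into disjoint open cells. I would proceed by induction via the fibration $p\colon SU(n)\to S^{2n-1}$, $A\mapsto A e_1$, where $e_1$ is the coordinate vector on which $W$ acts nontrivially. The composite $p\circ f_n$ restricted to the open top cell is a homeomorphism onto $S^{2n-1}$ minus a point, since its complement corresponds to the basepoint $e_1$. This is a direct calculation: $VWe_1$ is a function of $(\theta,x)$ that hits every unit vector other than $e_1$ exactly once. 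Write $A=A'A''$ with $A'\in e_n$ and $A''\in SU(n-1)=p^{-1}(e_1)$. Then every $A$ with $p(A)\neq e_1$ factors uniquely in this way, and matrices with $p(A)=e_1$ lie in $SU(n-1)$. Hence $SU(n)=SU(n-1)\sqcup e_n\cdot SU(n-1)$ as sets. Inductively this gives exactly the cells $e_{k_1,\dots,k_q}$, where either $k_1=n$ or the cell lies in $SU(n-1)$. There is one subtlety: the ordering of factors must match the decreasing indices, since the multiplication is $e_{k_1}\cdots e_{k_q}$ with $e_{k_1}$ on the left. The dimension of $e_{k_1,\dots,k_q}$ is $\sum(2k_i-1)$, and the cell count recovers $\Lambda(x_3,\dots,x_{2n-1})$ rank. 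This serves as a sanity check.

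The main obstacle is the closure-finiteness condition: the boundary of each product cell must lie in the union of cells of lower dimension. Concretely, one needs $\partial e_n$ to lie in $f_n(\Sigma\C P^{n-2})$, which is lower primitive cells, and one needs products such as $e_j\cdot e_k$ with $j<k$ to be rewritable in lower cells of the decreasing form. I would handle this with the dimension count plus the fibration factorization. The closure of $e_{k_1}\cdots e_{k_q}$ is the image of the compact product $\overline{e_{k_1}}\times\cdots\times\overline{e_{k_q}}$, which lies in $\overline{e_{k_1}}\cdot SU(k_1-1)$. By induction, $SU(k_1-1)$ is a subcomplex. The frontier consists of points that either lie over the basepoint of $S^{2k_1-1}$, and hence in $SU(k_1-1)$, or come from lower-dimensional boundary products. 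These have strictly smaller dimension, and by the unique factorization above they lie in finitely many lower cells. Finally, I use the fact that $f_n$ is cellular for the product CW structure on $\Sigma\C P^{n-1}$ from the Schubert cells. This follows from the same compatibility of $f_n$ with $f_{n-1}$, and it gives property (2).
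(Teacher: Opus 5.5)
Your overall architecture is the standard one: induction along $SU(n-1)\to SU(n)\to S^{2n-1}$, plus unique factorization $SU(n)=SU(n-1)\sqcup e_n\cdot SU(n-1)$. The paper itself gives no argument and simply cites Yokota. This architecture does show that the open cells $e_{k_1,\dots,k_q}$ are embedded and partition $SU(n)$. Two things need repair: one minor, one substantive.

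The minor one is the projection, which must be $A\mapsto Ae_n$, not $A\mapsto Ae_1$. Since $W$ acts on $e_1$ and the flag is $\C^1\subset\C^2\subset\cdots$, the copy of $SU(n-1)$ containing $f_n(\Sigma\C P(\C^{n-1}))$ is the stabilizer of $e_n$. With your choice, $p^{-1}(e_1)$ is the wrong $SU(n-1)$. Your ``direct calculation'' is also false: for every $x$ with $x_1=0$ one has $f_n(\theta,x)e_1=e^{-2i\theta}e_1$, independently of $x$. With $e_n$ the calculation goes through.

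The substantive gap is the boundary condition. You flag it as the main obstacle but do not resolve it. Frontier points of $e_{k_1,\dots,k_q}$ lying over the basepoint are products in $\overline{e_{k_1-1}}\cdot\overline{e_{k_2}}\cdots\overline{e_{k_q}}$. If $k_2<k_1-1$, this is the closure of a lower cell of $SU(k_1-1)$ and induction applies. If $k_2=k_1-1$, you get $\overline{e_{k_2}}\cdot\overline{e_{k_2}}\cdots\overline{e_{k_q}}$, which is not the closure of any cell.

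Knowing that this set lies in the subcomplex $SU(k_1-1)$ and has small topological dimension does not put it in the $(d-1)$-skeleton. Unique factorization says each point lies in exactly one cell, not that this cell is small, and a low-dimensional compact set can meet the interiors of high-dimensional cells. This genuinely occurs. The frontier of $e_{6,5}$, of dimension $20$, contains $\overline{e_5}\cdot\overline{e_5}\subset SU(5)$, and $SU(5)$ has cells $e_{5,4,3}$ and $e_{5,4,3,2}$ of dimensions $21$ and $24$.

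The missing ingredient is an absorption relation $\overline{e_k}\cdot\overline{e_k}\subset\overline{e_k}\cdot\overline{e_{k-1}}$. Applied repeatedly (it can cascade when $k_3=k_2-1$, and so on), it rewrites every frontier product as a product of closures with strictly decreasing indices and strictly smaller total dimension. Induction then applies to that product.

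To prove the relation, write $R_x(\lambda)$ for the complex reflection multiplying the line $x$ by $\lambda$. Normalize Yokota's map so that it lands in $SU(k)$ and is trivial at $\theta=0,\pi$ and at $x=e_1$; then $f_k(\theta,x)=R_x(\lambda)R_{e_1}(\lambda)^{-1}$ with $\lambda=e^{2i\theta}$. It follows that $f_k(\theta,x)f_k(\phi,y)=R_x(\lambda)R_{y'}(\mu)R_{e_1}(\lambda\mu)^{-1}$ with $y'=R_{e_1}(\lambda)^{-1}y$. The factor $R_x(\lambda)R_{y'}(\mu)$ is supported on a plane $P\subset\C^k$.

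A $2\times 2$ computation shows that every element of $U(P)$ can be written as $R_u(\alpha)R_w(\beta)$ for any prescribed unit vector $w\in P$. Concretely, $\det(N-R_w(\beta))=0$ always has a solution with $|\beta|=1$. Take $w\in P\cap\C^{k-1}$. Since $R_{e_1}(\alpha)$ preserves $\C^{k-1}$, the product becomes $f_k(\cdot,u)\,f_{k-1}(\cdot,v)$ with $v=R_{e_1}(\alpha)w\in\C^{k-1}$.
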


From the definition of $f_n$, we see that Yokota's map can be made equivariant.

\begin{corollary}\label{cor:Yokota's-map-is-equivariant}
    Yokota's map $\Sigma^1 \C P(\C^n) \rightarrow SU(n)$ refines to a $G$-equivariant map
    \[ 
        \Sigma^1 \C P(W_n) \rightarrow SU(W_n) . 
    \]
\end{corollary}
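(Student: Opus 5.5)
We define the equivariant map by the same formula as Yokota's, with $\C^n$ replaced by $W_n$ (which has underlying Hermitian inner product space $\C^n$), and check that it commutes with the $G$-actions. Because $W_n=\oplus_{i=1}^n V_i$ is a sum of one-dimensional representations, we use an orthonormal basis adapted to this splitting. In that basis $g\in G$ acts on $W_n$ by a diagonal unitary matrix $D_g=\mathrm{diag}(\chi_1(g),\dots,\chi_n(g))$, where $\chi_i$ is the character of $V_i$. The action on $\C P(W_n)$ is $[x]\mapsto[D_g x]$, and the action on $\Sigma^1\C P(W_n)$ is this action together with the trivial action on the suspension coordinate $\theta$. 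The action on $SU_G(W_n)$ is conjugation $M\mapsto D_g M D_g^{-1}$. We must show $f_n(\theta,[D_gx])=D_g f_n(\theta,[x])D_g^{-1}$.

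First we check that $f_n$ is well defined on $\C P(W_n)$. If $x$ is replaced by $\lambda x$ with $|\lambda|=1$, the rank-one projector $x\bar{x}^T$ is unchanged, so $V$ depends only on $[x]$. Moreover, $D_gx$ is again a unit vector, so it is an admissible representative of $[D_gx]$.

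Next we compute the two factors. The projector satisfies $(D_gx)\overline{(D_gx)}^T=D_g\,x\bar x^T\,\overline{D_g}^T=D_g\,x\bar x^T D_g^{-1}$, using that $D_g$ is unitary. Hence $V(\theta,[D_gx])=D_gV(\theta,[x])D_g^{-1}$, since $\mathrm{Id}_n$ commutes with $D_g$ and the scalar $-2\exp(-i\theta)\cos\theta$ does not involve $g$. The matrix $W$ is diagonal and does not depend on $x$, so $D_gWD_g^{-1}=W$. Therefore
\[
f_n(\theta,[D_gx])=D_gVD_g^{-1}\,W=D_gVD_g^{-1}\,D_gWD_g^{-1}=D_g\,f_n(\theta,[x])\,D_g^{-1},
\]
which is equivariance. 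The basepoint and suspension identifications at $\theta=0,\pi$ are fixed by the action, so the map descends to the $G$-equivariant suspension.

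The only point needing care is the diagonal form of the action. If the splitting of $W_n$ into one-dimensional representations were not compatible with the matrix coordinates, $W$ would not commute with the action. Choosing the basis adapted to the $V_i$ removes this issue. This uses the standing assumption that $G$ is abelian, so every complex representation splits into one-dimensional summands.
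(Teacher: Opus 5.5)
Your argument is correct and takes essentially the paper's approach. The paper only remarks that equivariance is visible ``from the definition of $f_n$''; your computation is exactly the check left implicit there: the projector $x\bar{x}^T$ transforms by conjugation under $D_g$, and the diagonal factor $W$ commutes with $D_g$ in a basis adapted to $W_n=\bigoplus_i V_i$. (For the second point, all you actually need is that the first coordinate line is a $G$-subrepresentation, so the full splitting into one-dimensional summands is more than is strictly required.)
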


With this, we may obtain explicit computations.

\begin{theorem}\label{thm:MU_G-homology-of-SU_G-for-G-abelian}
    Let $G$ be compact abelian Lie group and $H \subset G$. Then we have
    \[
        \pi^H_*(MU_G \wedge (SU_G)_+) \cong (MU_H)_* \otimes_\Z \Lambda(\lambda_1,\lambda_2,...)
    \]
    where $\lambda_i$ is an exterior algebra generator in degree $2i+1$.
\end{theorem}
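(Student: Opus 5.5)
The plan is to run the cellular (Atiyah--Hirzebruch type) filtration coming from an equivariant refinement of Yokota's CW structure, and to use complex stability of $MU_G$ to reduce every cell to an ordinary suspension. First, using \cref{cor:Yokota's-map-is-equivariant}, I will check that Yokota's cells are $G$-invariant once the complete flag $W_1\subset W_2\subset\cdots\subset W_n$ is used to define the Schubert cells of $\C P(W_n)$. The flag is $G$-stable because each $V_i$ is a one-dimensional representation, which is where abelianness enters. The Schubert cell $\C P(W_k)-\C P(W_{k-1})$ is then $G$-homeomorphic to the open disk in the complex representation $\mathrm{Hom}(V_k,W_{k-1})\cong \overline{V_k}\otimes W_{k-1}$. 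Consequently the primitive cell $e_k$ is a $G$-cell of the form $D^1\times D(U_k)$, with $U_k$ a complex representation of complex dimension $k-1$ and trivial action on the suspension coordinate.

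Conjugation commutes with matrix multiplication, so the product cells $e_{k_1,\dots,k_q}$ are also $G$-cells of the form $D^q\times D(U_{k_1}\oplus\cdots\oplus U_{k_q})$. This makes $SU_G(W_n)$ a filtered $G$-space whose filtration quotients are wedges of representation spheres $S^{q}\wedge S^{U}$. Smashing with $MU_G$ and using complex stability, each quotient becomes $\bigvee \Sigma^{q+\dim_\R U}MU_G$. Its $\pi^H_*$ is therefore free over $(MU_H)_*=\pi^H_*MU_G$ on generators of odd-plus-even degrees $\sum_j(2k_j-1)$, one for each subset $\{k_1>\dots>k_q\}\subset\{2,\dots,n\}$. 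Here $\pi^H_*MU_G\cong (MU_H)_*$, since restriction of $MU_G$ is $MU_H$ when $G$ is abelian, or one can simply adopt this as notation.

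The key step, and the one I expect to be the main obstacle, is showing that the resulting spectral sequence, equivalently the cellular chain complex of $(MU_H)_*$-modules, has zero differentials. Evenness of $\underline{\pi}_*MU_G$ alone does not suffice: cells in degrees of different parity can still interact, since $q$ varies. My first attempt will follow the Baker--Smith/Yokota strategy. The map $f_n$ gives a $G$-map $\Sigma\C P(W_n)_+\to SU_G(W_n)$, and $MU_G\wedge\Sigma \C P(W_n)_+$ has homotopy that is free and concentrated in odd degrees, because $\C P(W_n)$ has even representation cells. By complex orientability, $\pi^H_*(MU_G\wedge \C P(W_n)_+)$ is free on classes $\beta_0,\dots,\beta_{n-1}$. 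Let $\lambda_{k-1}$ denote the image of the suspension of $\beta_{k-1}$, which sits in degree $2k-1$; after reindexing, the generators $\lambda_i$ have degree $2i+1$.

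Using the $G$-$\mathbb{E}_\infty$ (in particular, Pontryagin) ring structure from \cref{cor:identification-of-ETHH-of-MU_G}, the products $\lambda_{k_1-1}\cdots\lambda_{k_q-1}$ are detected on the corresponding top cells $e_{k_1,\dots,k_q}$ modulo lower filtration, since multiplication is exactly how those cells are built. Every cellular generator therefore lifts to an actual homotopy class, so all differentials vanish. Odd-degree classes square to zero by graded commutativity. If $2$ is a concern, I will use Yokota's observation that $e_k\cdot e_k$ lies in lower cells, which gives $\lambda^2=0$ on the nose. This yields $\pi^H_*(MU_G\wedge SU_G(W_n)_+)\cong (MU_H)_*\otimes\Lambda(\lambda_1,\dots,\lambda_{n-1})$.

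Finally, I will pass to the colimit over $n$. The inclusions $SU_G(W_{n-1})\to SU_G(W_n)$ are cellular and preserve cell names, and homotopy commutes with filtered colimits, which gives the stated exterior algebra. The free-ness of the $E^\infty$ page rules out additive extensions, and the generators' relations rule out multiplicative ones.
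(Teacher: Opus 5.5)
Your proposal is correct and is essentially the paper's argument: you use the equivariant Yokota structure with complex-representation cells together with complex stability, and the key step is that each cell $e_{k_1,\dots,k_q}$ is the image of the top cell of $\Sigma\C P(W_{k_1})\times\cdots\times\Sigma\C P(W_{k_q})$, whose $MU_G$-homology is free, so your Pontryagin-product lift $\lambda_{k_1-1}\cdots\lambda_{k_q-1}$ is the same as the paper's ``pushforward of a homologically trivial attaching map'' step. A few points need tightening. The product doing this work is the one from matrix multiplication on $SU_G(W_n)$; the $G$-$\mathbb{E}_\infty$ structure of \cref{cor:identification-of-ETHH-of-MU_G} needs $G$ finite and is matched with it by Eckmann--Hilton, as in the paper. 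Each $\beta_{k-1}$ must be chosen as a lift of the top Schubert cell class of $\C P(W_k)$, because $(MU_H)_*$ has classes in negative degrees. Finally, your fallback via $e_k\cdot e_k$ only puts $\lambda^2$ in lower filtration, so rely on graded commutativity and torsion-freeness instead.
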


\begin{proof}
    First, write $SU_G$ as the sequential colimit of the $SU_G(W_n)$. It suffices to show that $(MU_G)_*(SU_G(W_n))$ is the algebra $(MU_G)_* \otimes_\Z \Lambda(\lambda_1,\dots,\lambda_{n-1})$, and we shall induct on $n$. For the base case $n=1$, suppose we have picked $W_1=V_1$ to be the trivial $G$-representation on $\mathbb{C}$.  Then the formula follows from the isomorphism $SU_G(W_1)\cong SU_G(\mathbb{C})\cong S^1$, where the circle has trivial action.

    Note that Yokota uses the standard Schubert cells on complex projective space. In the case of $\C P(W_n)$, \cite[Lemma 5.2]{Wis24} 
    says that these cells are $G$-equivariantly homeomorphic to unit disks in certain complex representations.

    Write $e := e_{n,k_2,\dots,k_q}$. By \cref{thm:Yokota,cor:Yokota's-map-is-equivariant}, we have a cellular, $G$-equivariant map 
    \[
        \Phi \colon  \Sigma^1 \C P(W_n) \times \Sigma^1 \C P(W_{k_2}) \times \dots \times \Sigma^1 \C P(W_{k_q}) \rightarrow SU(W_n)
    \]
    which sends the top cell of the domain to the cell $e$. This top cell is $G$-equivariantly homeomorphic to the unit disk in $V \times \R^s$ for some complex $G$-representation $V$ and $s \geq 0$.
    
    By a computation of Cole (found more easily in the literature as \cite[Theorem 4.3]{CGK00}), each $\Sigma^1 \C P(W_n)$ has free finitely generated $MU_G$-homology of rank $n-1$, hence the equivariant K\"{u}nneth theorem \cite{LM} implies the $MU_G$-homology of the domain of $\Phi$ is free and finitely generated. Crucially, this implies that the attaching map of the top cell of the domain of $\Phi$ induces zero in $MU_G$-homology. The pushforward of this attaching map along $\Phi$ yields the attaching map for $e$; it thus induces zero in $MU_G$-homology. 
    
    We have shown $(MU_G)_*(SU(W_n))$ has the desired additive form: the cell $e$ determines the class of the product $\lambda_{n-1} \lambda_{k_2-1} \dots \lambda_{k_q-1}$. The multiplicative structure is determined as follows: $\lambda_i$ is in an odd degree and $(MU_G)_*(SU(W_n))$ is free over $\Z$, so it is exterior. Next, the $\lambda_i$ multiply together as claimed using the ring structure coming from the $\mathbb{A}_\infty$ multiplication on $SU_G$ because each general cell $e$ was defined as the external product of the ``primitive" cells $e_k$. This ring structure is the one we started with by an Eckmann--Hilton argument.
\end{proof}

From \cref{cor:identification-of-ETHH-of-MU_G}, which identifies $\ETHH(MU_G)$ when $G$ is finite, we immediately obtain the following.

\begin{corollary}\label{cor:ETHH-of-MU_G-for-G-finite-abelian}
    Let $G$ be finite abelian group and $H$ any subgroup. Then we have
    \[
        \pi^H_*(\ETHH(MU_G)) \cong MU_H^* \otimes_\Z \Lambda(\lambda_1,\lambda_2,...)
    \]
    where $|\lambda_i| = 2i+1$.
\end{corollary}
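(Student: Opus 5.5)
This is a direct combination of \cref{cor:identification-of-ETHH-of-MU_G} and \cref{thm:MU_G-homology-of-SU_G-for-G-abelian}. Take $G$ finite abelian and $H \subseteq G$.

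First I restrict to the $G$-equivariant part. The $H$-fixed point homotopy groups of $\ETHH(MU_G)$ mean those of the underlying $G$-spectrum $\Res^{G\times S^1}_{G}\ETHH(MU_G)$, with $H \subseteq G \times 1$. By \cref{cor:identification-of-ETHH-of-MU_G}, this $G$-spectrum is equivalent to $MU_G \wedge \Sigma^\infty_+ SU_G$ as $G$-$\mathbb{E}_\infty$ ring spectra. The equivalence therefore induces an isomorphism of graded rings
\[
\pi^H_*(\ETHH(MU_G)) \cong \pi^H_*(MU_G \wedge (SU_G)_+),
\]
and not merely of groups.

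Second, I apply \cref{thm:MU_G-homology-of-SU_G-for-G-abelian}. A finite abelian group is a compact abelian Lie group, so the theorem gives $(MU_H)_* \otimes_\Z \Lambda(\lambda_1,\lambda_2,\dots)$ with $|\lambda_i| = 2i+1$. In the statement, $MU_H^*$ should be read as the coefficient ring $\pi^H_*(MU_G) = (MU_H)_*$, regraded homologically as usual.

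The only real point to check is that the ring structures match. The ring structure on $\pi^H_*(MU_G \wedge (SU_G)_+)$ used in \cref{thm:MU_G-homology-of-SU_G-for-G-abelian} comes from the $\mathbb{E}_\infty$ structure on $MU_G$ together with the group multiplication on $SU_G$. The $G$-$\mathbb{E}_\infty$ structure from \cref{cor-ETHH-FH-mult}, via the Thom construction of $MU_G$ from $\Omega SU_G \simeq BU_G$, comes from the infinite loop structure on $SU_G$. These agree on homotopy by the Eckmann--Hilton argument already invoked at the end of the preceding proof, so the isomorphism is one of rings.
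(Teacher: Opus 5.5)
Your proposal is correct and follows the paper's approach: the paper obtains this corollary immediately by combining the equivalence $\Res^{G\times S^1}_{G}\ETHH(MU_G)\simeq MU_G\wedge\Sigma^\infty_+ SU_G$ with the computation of $\pi^H_*(MU_G\wedge (SU_G)_+)$. Your reading of $MU_H^*$ as the coefficient ring $(MU_H)_*$ is right, and your Eckmann--Hilton comparison of the two ring structures is the same point the paper makes at the end of the proof of that computation.
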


\section{\texorpdfstring{Computing $\ETHH(\HFp)$ for $p$ odd}{Computing ETHH(HF\_p) for p odd}}\label{sec: Bokstedt}

In this section, we compute the homotopy groups of $\ethh(\HFp)$ for $p$ odd, where $\HFp$ denotes the equivariant Eilenberg--MacLane spectrum of the constant $C_p$-Mackey functor $\underline{\mathbb{F}}_p$. We remark that for $p=2$ and $C_2$, the answer is already known additively. By Theorem 5.2.2 of \cite{AGHKK22}, there is an equivalence of genuine $C_2$-spectra,
$$\ETHH (H\underline{\mathbb{F}}_2) \simeq  H\underline{\mathbb{F}}_2 \wedge \Omega^\sigma S^{\rho +1}_+ \simeq H\underline{\mathbb{F}}_2 \wedge (\bigvee_{k \geq 0} S^{2k\rho} \vee \bigvee_{k \geq 0} S^{2k\rho +2}).$$
The multiplicative structure on the homotopy groups will be computed in forthcoming work of Chan--Vogeli.

\subsection{\texorpdfstring{The $C_p$-geometric fixed points of $\ETHH(\HFp)$}{The C\_p-geometric fixed points of ETHH(HF\_p)}}

Let $G = C_p$, and let $X$ be a genuine $G$-spectrum.  Let $EG$ be a free contractible $C_p$-space, and let $\widetilde{E}G$ denote the reduced suspension of $EG$.  We write
\begin{align*} \label{diag:TateSquare}
    X^h & = F(EG_+,X) \\
    X^\Phi & = \widetilde{E}G\wedge X\\
    X^t & = F(EG_+,\widetilde{X}).
\end{align*}
Note that the geometric fixed points of $X$ are $X^{\Phi C_p}=(X^{\Phi})^{C_p}$.  The Tate square is homotopy pullback square of genuine $G$-spectra
\[
    \begin{tikzcd}
        X \ar[r] \ar[d] & X^{\Phi} \ar[d]\\
        X^h\ar[r] & X^t
    \end{tikzcd}
\]
where the vertical maps are defined by pullback along the collapse $EG_+\to S^0$ and the horizontal maps are obtained by smashing $X$ and $X^h$ with the canonical map $S^0\to \widetilde{E}G$.

Since this square is a homotopy pullback the homotopy fibers of the horizontal maps are equivalent, and are given by $EG_+\wedge X$.  Notice that whenever $X$ is connective we have $EG_+\wedge X$ is also connective.  Looking at the induced map of long exact sequences for the long exact sequences induced by the Tate square gives the following lemma.

\begin{lemma}[{\cite[Lemma 5.1.3]{ChanVogeli}}]\label{lem:Greenlees--Meier}
    The map $X\to X^h$ is a connective cover if and only if $X^{\Phi}\to X^{t}$ is a connected cover.
\end{lemma}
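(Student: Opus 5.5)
We compare the long exact sequences coming from the two horizontal cofiber sequences in the Tate square. Write $F = EG_+\wedge X$. Because the Tate square is a homotopy pullback, the horizontal maps have equivalent fibers. We therefore get cofiber sequences
\[
F\to X\to X^h, \qquad F\to X^\Phi\to X^t,
\]
and a map between them that is the identity on $F$. Taking $C_p$-fixed points (or genuine homotopy groups at each orbit) gives a commutative ladder of long exact sequences with the identity on the terms $\pi_*F$. The statement to prove is about the induced maps on homotopy: $\pi_n X\to \pi_n X^h$ is an isomorphism for $n\geq 0$ if and only if $\pi_n X^\Phi\to\pi_n X^t$ is an isomorphism for $n\geq 1$ (connected cover). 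The definitions of connective and connected cover also impose conditions on the sources, which we handle separately.

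The key input is that $F$ is connective whenever $X$ is, so $\pi_n F=0$ for $n<0$. This is the hypothesis we work under, since connective covers only make sense for connective $X$. The long exact sequence for the top row then gives isomorphisms $\pi_nX\cong\pi_nX^h$ for $n\leq -2$, together with an injection at $n=-1$. The same holds for the bottom row.

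The main step is a diagram chase on the bottom row. Exactness of
\[
\pi_n F\to\pi_nX^\Phi\to\pi_nX^t\to\pi_{n-1}F\to\pi_{n-1}X^\Phi
\]
shows that $\pi_nX^\Phi\to\pi_nX^t$ is an isomorphism for all $n\geq 1$ exactly when $\pi_nF\to\pi_nX^\Phi$ is zero for $n\geq 1$ and $\pi_{n-1}F\to\pi_{n-1}X^\Phi$ is injective for $n\geq 1$. Taken together, this says $\pi_k F\to \pi_k X^\Phi$ vanishes for $k\geq1$, is injective for $k\geq 0$, and hence $\pi_kF=0$ for $k\geq 1$. The same chase on the top row shows that $\pi_nX\to\pi_nX^h$ is an isomorphism for $n\geq 0$ exactly when $\pi_kF\to\pi_kX$ is zero for $k\geq 0$ and injective for $k\geq -1$, that is, when $\pi_k F=0$ for $k\geq -1$.

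I expect the main obstacle to be the degree bookkeeping: the two criteria differ by a shift, and reconciling them requires the map $F\to X^\Phi$ to be the composite $F\to X\to X^\Phi$. The cofiber of $F\to X^\Phi$ is $X^t$ and that of $F\to X$ is $X^h$, and comparing the two exact sequences at $\pi_0$ and $\pi_{-1}$ is where the two conditions must be matched. Concretely, I would check that the condition "$\pi_0X\cong\pi_0X^h$ and $X^h$ has no negative homotopy" is equivalent to "$\pi_0 X^\Phi\to\pi_0X^t$ is surjective with kernel $\pi_0F$," using the identity on $F$ in the ladder, and then verify that the remaining isomorphisms match degree by degree via the five lemma applied to the ladder. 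This is the Greenlees–Meier-style argument cited from \cite[Lemma 5.1.3]{ChanVogeli}, and I would follow it.
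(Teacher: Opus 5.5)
There is a genuine error at the first step: you have the fiber sequences of the Tate square the wrong way round. Smashing $EG_+\to S^0\to\widetilde{E}G$ with $X$ and with $X^h$ shows that $EG_+\wedge X$ is the common fiber of the \emph{horizontal} maps,
\[
EG_+\wedge X\to X\to X^{\Phi},\qquad EG_+\wedge X\simeq EG_+\wedge X^h\to X^h\to X^t .
\]
The fiber of the vertical map $X\to X^h=F(EG_+,X)$ is instead $F(\widetilde{E}G,X)$, which is not connective in general (for $X=\HFp$ its $C_p$-fixed points are concentrated in degrees $\le -2$). So the rows $F\to X\to X^h$ and $F\to X^\Phi\to X^t$, with $F=EG_+\wedge X$ connective, do not exist.

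The ``shift'' you run into is an artifact of this error. The two row-wise criteria you derive ($\pi_kF=0$ for $k\ge -1$, versus $\pi_kF=0$ for $k\ge1$ with $\pi_0F\to\pi_0X^\Phi$ injective) are not equivalent, and no bookkeeping at $\pi_0$ and $\pi_{-1}$ can reconcile them. The first criterion would even say that $\HFp\to\HFp^h$ is not a connective cover, since $\pi_0^{C_p}(EG_+\wedge\HFp)\cong H_0(C_p;\F_p)\neq0$. That contradicts what is shown in the proof of Proposition~\ref{prop:xyzw}. Moreover, in your ladder the two maps you want to compare lie inside the rows, so the five lemma applied to that ladder says nothing about them.

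The argument the paper intends uses the correct rows above, joined by the vertical maps of the Tate square, which give an equivalence on the fiber terms. Then $X\to X^h$ and $X^\Phi\to X^t$ are rungs of the ladder, and the five lemma applies degree by degree. If $\pi_nX\cong\pi_nX^h$ for $n\ge0$, then $\pi_nX^\Phi\cong\pi_nX^t$ for $n\ge0$; at $n=0$ the fifth rung $\pi_{-1}X=0\to\pi_{-1}X^h$ is injective. The converse uses the rungs at $\pi_{n+1}X^\Phi$ and $\pi_nX^\Phi$. Connectivity of $X^\Phi$ follows from that of $X$ and of $EG_+\wedge X$.

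There is no degree shift: ``connected cover'' in the statement should be read as ``connective cover''. This is confirmed by its use in Proposition~\ref{prop:xyzw} and Corollary~\ref{cor:homotopyGeoFix}, where $\pi_0(\HFp\geop)\cong\F_p\neq0$. With your reading (isomorphisms only for $n\ge1$), the five lemma at $\pi_0X$ would lack the isomorphism $\pi_0X^\Phi\to\pi_0X^t$ that it needs.
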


\begin{proposition}\label{prop:xyzw}
    The maps $\HFp^{\Phi}\to \HFp^t$ and $H\underline{\mathbb{Z}}^{\Phi}\to H\underline{\mathbb{Z}}^{tC_p}$ are connective covers of $G$-spectra. 
\end{proposition}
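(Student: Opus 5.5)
The plan is to reduce to \cref{lem:Greenlees--Meier}. For $X$ either $\HFp$ or $H\underline{\Z}$, that lemma says $X^{\Phi}\to X^{t}$ is a connective cover exactly when the map $X\to X^{h}$ is a connective cover of $G$-spectra. So it suffices to show that $X\to X^h=F(EG_+,X)$ induces an isomorphism on $\underline{\pi}_n$ for all $n\geq 0$. Since $X$ is connective, this is the required condition. We check it on homotopy Mackey functors, that is, levelwise at $C_p/e$ and at $C_p/C_p$.

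\emph{Underlying level.} The collapse $EG_+\to S^0$ is an underlying equivalence, so $\Res^{C_p}_e X\to \Res^{C_p}_e X^h$ is an equivalence. In particular it is an isomorphism on $\pi_n$ for every $n$.

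\emph{Fixed point level.} Write $\underline{M}$ for $\underline{\F}_p$ or $\underline{\Z}$, and $M$ for its underlying abelian group with trivial $C_p$-action.
\begin{enumerate}
\item Since $X$ is an Eilenberg--MacLane spectrum, $\pi_*X^{C_p}$ is $\underline{M}(C_p/C_p)=M$ in degree $0$ and vanishes otherwise.
\item The homotopy fixed point spectral sequence collapses because the underlying spectrum is concentrated in a single degree. It gives $\pi_{n}(X^{hC_p})\cong H^{-n}(C_p;M)$. This vanishes for $n>0$ and equals $H^0(C_p;M)=M^{C_p}=M$ for $n=0$.
\item It remains to see that the map on $\pi_0$ is an isomorphism. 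Under the identifications above, the map $\pi_0X^{C_p}\to \pi_0X^{hC_p}$ is the restriction $\underline{M}(C_p/C_p)\to \underline{M}(C_p/e)^{C_p}$. For a constant Mackey functor this restriction is the identity $M\to M$.
\end{enumerate}
Hence $X^{C_p}\to X^{hC_p}$ is an isomorphism on $\pi_n$ for $n\geq 0$. Negative degrees are irrelevant, since $X$ is connective.

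Combining the two levels, $X\to X^h$ is a connective cover of $G$-spectra, and \cref{lem:Greenlees--Meier} gives the proposition for both $\HFp$ and $H\underline{\Z}$. The only step needing care is the identification of the $\pi_0$ map with the Mackey restriction. I would justify it by naturality of the edge homomorphism of the homotopy fixed point spectral sequence: the edge map is the map from genuine fixed points to the $C_p$-invariants of the underlying homotopy, which on $\pi_0$ is restriction.
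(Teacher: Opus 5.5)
Your proof is correct and follows the paper's argument. Both reduce via \cref{lem:Greenlees--Meier} to showing $X\to X^h$ is a connective cover, note that this map is an underlying equivalence, and then use the collapsing homotopy fixed point spectral sequence at the $C_p/C_p$ level. The only difference is the $\pi_0$ step: the paper observes that $X^{C_p}\to X^{hC_p}$ is a ring map and the only ring endomorphism of $\F_p$ (resp.\ $\Z$) is the identity, whereas you identify the map, via the edge homomorphism, with restriction in the constant Mackey functor. Both arguments work, and yours does not need the multiplicative structure.
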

\begin{proof}
    By the lemma it suffices to prove that the map $\HFp\to \HFp^h$ is a connective cover.  Since $EG_+$ is non-equivariantly equivalent to a sphere we have that the map $\HFp\to \HFp^h$ is an underlying equivalence and thus it remains to check that this map is an connective cover on $C_p$-fixed points.

    By definition, $\HFp^{C_p} = H\mathbb{F}_p$.  The homotopy groups of the spectrum $\HFp^{hC_p}$ are computed using the homotopy fixed point spectral sequence which collapses immediately and tells us that $\HFp^{hC_p}$ is co-connective and $\pi_0\HFp^{hC_p}\cong \mathbb{F}_p$ It follows that the connective cover of $\HFp^{hC_p}$ is $H\F_p$, and all that remains is to check that the map $\HFp^{C_p}\to \HFp^{hC_p}$ is an isomorphism on $\pi_0$.  But this is a map of ring spectra, hence induces the unique ring endomorphism of $\F_p$ on $\pi_0$ which is an isomorphism.  The proof for $H\underline{\mathbb{Z}}$ is essentially the same.
\end{proof}

\begin{corollary}\label{cor:homotopyGeoFix}
    The integer graded homotopy groups of $\HFp\geop$ are given by the non-negative Tate-cohomology ring $\widehat{H}^{*}(C_p;\F_p)$.  Explicitly, for $*\geq 0$, these are given by
    \[
        \pi_*(\HFp\geop)\cong \widehat{H}^{*}(C_p;\F_p)\cong
        \begin{cases}
            \F_2[x],\ |x|=1 & p=2\\
            \Lambda(x)\otimes \F_p[y],\ |x|=1,\ |y|=2 & p>2.
        \end{cases}
    \]
    Similarly, the homotopy groups of $H\underline{\mathbb{Z}}\geop$  given by the non-negative Tate cohomology ring $\widehat{H}^{*}(C_p;\mathbb{Z})$.  Explicitly, for $*\geq 0$, these are given by 
    \[
        \pi_*(H\mathbb{Z}\geop)\cong \widehat{H}^{*}(C_p;\mathbb{Z})\cong \mathbb{F}_p[y]
    \]
    where $|y|=2$.
\end{corollary}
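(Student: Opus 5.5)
The plan is to read the corollary off \cref{prop:xyzw}. By \cref{prop:xyzw}, the map $\HFp^{\Phi}\to\HFp^{t}$ is a connective cover of genuine $C_p$-spectra. Taking $C_p$-fixed points preserves connective covers, since homotopy groups of fixed points are just one level of the homotopy Mackey functors. So the map
\[
\HFp\geop=(\HFp^{\Phi})^{C_p}\to(\HFp^{t})^{C_p}=\HFp^{tC_p}
\]
is an isomorphism on $\pi_*$ for $*\ge 0$. We need $(\HFp^t)^{C_p}\simeq \HFp^{tC_p}$, the Tate construction of the underlying Borel spectrum $H\F_p$ with trivial action; this holds because $F(EG_+,\widetilde{E}G\wedge X)$ only depends on the underlying spectrum with $C_p$-action. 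The map is also a map of ring spectra, since the Tate square consists of lax monoidal functors applied to the commutative ring $\HFp$. Hence it induces a ring isomorphism in non-negative degrees.

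Next we identify $\pi_{*}(H\F_p^{tC_p})\cong \widehat{H}^{-*}(C_p;\F_p)$. Here we use the Tate spectral sequence, which collapses because it is concentrated in a single row ($E^2=\widehat H^{-s}(C_p;\F_p)$ with trivial coefficients). With homological grading, the classes in non-negative homotopical degree correspond to Tate cohomology in non-positive cohomological degree. By Tate periodicity for cyclic groups, multiplication by a degree-2 periodicity class identifies these groups with the displayed ring, and the paper's convention writes this as $\widehat H^{*}(C_p;\F_p)$, $*\ge0$. Recall that $\widehat H^*(C_p;\F_p)=\Lambda(x)\otimes\F_p[y^{\pm1}]$ for $p$ odd and $\F_2[x^{\pm1}]$ for $p=2$. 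Its non-negative part is $\Lambda(x)\otimes\F_p[y]$, respectively $\F_2[x]$.

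For the ring structure, we still have to rule out multiplicative extensions. The Tate spectral sequence is multiplicative, and since it is concentrated on one line, $E^\infty$ equals $\pi_*$ as a ring. We also use $x^2=0$ for odd $p$, which holds by graded commutativity because $p$ is odd. The $H\underline{\Z}$ case is identical: we use the second half of \cref{prop:xyzw}, together with $\widehat H^{*}(C_p;\Z)=\F_p[y^{\pm1}]$ with vanishing odd degrees, whose non-negative part is $\F_p[y]$ (degree $0$ being $\widehat H^0=\Z/p$).

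The main obstacle I expect is bookkeeping: matching grading conventions so that homotopical degree $*\ge0$ corresponds to the displayed ring, and verifying that the fixed points of $\HFp^{t}$ are the Tate construction with the correct multiplicative structure. Once those points are checked, the rest follows formally from \cref{prop:xyzw}.
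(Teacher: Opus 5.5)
Your proposal is correct and follows the route the paper intends. The paper gives no separate proof: the corollary is read off from \cref{prop:xyzw} by taking $C_p$-fixed points of the connective cover $\HFp^{\Phi}\to\HFp^{t}$ (and likewise for $H\underline{\mathbb{Z}}$) and then using the standard Tate cohomology ring of $C_p$. The one step to tighten is the grading: since $\pi_n(H\F_p^{tC_p})\cong\widehat H^{-n}(C_p;\F_p)$, the displayed ring comes from the generators $xy^{-1}$ and $y^{-1}$ in homotopical degrees $1$ and $2$ (respectively $x^{-1}$ for $p=2$, and $y^{-1}$ for $\mathbb{Z}$-coefficients), not from multiplication by a periodicity class, which is only an additive isomorphism between degrees.
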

\begin{remark}\label{remark: induced map on geofix}
    For odd primes, the map $H\mathbb{Z}\geop\to \HFp\geop$ induces the inclusion $\mathbb{F}_p[y]\to \Lambda(x)\otimes \mathbb{F}_p[y]$ on homotopy groups.  
\end{remark}

Moreover, the spectral sequence degenerating at the $E_2$ page is merely a symptom of the Schwede--Shipley theorem, saying that we can think of $\HFp$ as a differential graded algebra rather than a spectrum, with appropriate equivariance.
\begin{corollary}\label{dgaGeoFix}
  As a CDGA, $(\HFp)\geop \we \tau_{\geq 0}\widehat{C}^*(C_p; \F_p)$, where $\widehat{C}$ refers to Tate cochains. Similarly, we have $(H\underline{\mathbb{Z}})\geop \we \tau_{\geq 0}\widehat{C}^*(C_p; \mathbb{Z})$
\end{corollary}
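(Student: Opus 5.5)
The plan is to deduce the statement from \cref{prop:xyzw}, after transporting everything into chain complexes along the Schwede--Shipley/Shipley equivalence between $H\F_p$-modules and $\F_p$-chain complexes, and likewise for $H\Z$. I read ``CDGA'' as ``$\mathbb{E}_\infty$-algebra in chain complexes''. Strictly commutative models are not available in characteristic $p$, and the statement only makes sense at the $\mathbb{E}_\infty$ level. I give the argument for $\HFp$; the $H\underline{\mathbb{Z}}$ case is word-for-word the same with $\Z$ in place of $\F_p$.

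First, I upgrade \cref{prop:xyzw} to a statement about commutative rings. The Tate square is a square of $\mathbb{E}_\infty$-rings, so after taking $C_p$-fixed points the map $\HFp\geop\to \HFp\tatep$ is a map of $\mathbb{E}_\infty$-rings. Here $\HFp\tatep$ is the Tate construction on the underlying spectrum $H\F_p$ with trivial $C_p$-action. Since $\HFp\geop$ is connective, this map factors through $\tau_{\geq 0}\HFp\tatep$, and $\tau_{\geq0}$ is lax symmetric monoidal, so the factorization is $\mathbb{E}_\infty$. By \cref{prop:xyzw} the resulting map is an equivalence. Every object in sight is an $\mathbb{E}_\infty$-$H\F_p$-algebra. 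On the Tate side this holds because the unit $H\F_p\to H\F_p^{hC_p}\to H\F_p^{tC_p}$ is $\mathbb{E}_\infty$. On the geometric fixed points side it holds through the induced map $H\F_p=\HFp^{C_p}\to\HFp\geop$. I will check that these two $H\F_p$-algebra structures agree under the equivalence. This should be automatic, since maps of $\mathbb{E}_\infty$-rings out of $H\F_p$ into a connective target with $\pi_0=\F_p$ are essentially unique on $\pi_0$.

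Second, I identify $H\F_p\tatep$ with Tate cochains. Under Shipley's theorem, $\mathrm{Fun}(BC_p,\mathrm{Mod}_{H\F_p})$ corresponds to the derived category $D(\F_p[C_p])$. Under this correspondence:
\begin{itemize}
\item homotopy fixed points correspond to $R\mathrm{Hom}_{\F_p[C_p]}(\F_p,-)$, which on the trivial module $\F_p$ is $C^*(C_p;\F_p)$;
\item homotopy orbits correspond to $\F_p\otimes^L_{\F_p[C_p]}(-)$;
\item the norm map corresponds to the algebraic norm.
\end{itemize}
Hence the cofiber of the norm is computed by $\mathrm{Hom}_{\F_p[C_p]}(P_\bullet,\F_p)$, where $P_\bullet$ is the complete resolution; that is, by $\widehat C^*(C_p;\F_p)$. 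The lax monoidal structure of the Tate construction on $C_p$-objects matches the cup product on Tate cochains. This follows because both are the unique lax monoidal structures making the map from homotopy fixed points monoidal. Concretely, $(-)^{tC_p}$ is the localization of $(-)^{hC_p}$ away from induced objects, and on the chain side the corresponding localization of the $\mathbb{E}_\infty$-algebra $C^*(BC_p;\F_p)$ is inverting $y$, which gives $\widehat C^*$.

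Finally, the connective cover corresponds to the good truncation $\tau_{\geq0}$ of chain complexes, and this truncation is lax monoidal, so it preserves $\mathbb{E}_\infty$-algebras. Combining this with the first step gives $(\HFp)\geop\simeq\tau_{\geq0}\widehat C^*(C_p;\F_p)$. On homotopy this recovers \cref{cor:homotopyGeoFix}, which serves as a consistency check.

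The main obstacle is the multiplicative identification in the second step: showing that the $\mathbb{E}_\infty$-structure on $H\F_p^{tC_p}$ is transported to the standard one on Tate cochains, and not merely that the two agree as modules. I would first try the universal property. The Tate construction is the unique lax monoidal functor under $(-)^{hC_p}$ that inverts the induced objects, so its value on the unit is the localization $C^*(BC_p;\F_p)[y^{-1}]$ as an $\mathbb{E}_\infty$-algebra. Since Shipley's equivalence is symmetric monoidal, this reduces the question to the well-understood $\mathbb{E}_\infty$-cochain algebra $C^*(BC_p;\F_p)$.
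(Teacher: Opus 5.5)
Your route is the paper's route. The paper gives no argument beyond \cref{prop:xyzw} and the remark that the statement is ``a symptom of Schwede--Shipley'', and your steps fill in exactly that sketch: factor the $\mathbb{E}_\infty$ map $\HFp\geop\to H\F_p^{tC_p}$ through the connective cover, transport along the symmetric monoidal equivalence $\mathrm{Mod}_{H\F_p}\simeq D(\F_p)$, identify $\F_p^{tC_p}$ there with Tate cochains, and note that $\tau_{\geq 0}$ is lax monoidal. Reading ``CDGA'' as $\mathbb{E}_\infty$-algebra in $D(\F_p)$ (resp.\ $D(\Z)$) is the right interpretation. Your ``main obstacle'' is also lighter than you fear. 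There is no independent $\mathbb{E}_\infty$-structure on $\widehat{C}^*(C_p;\F_p)$ to compare against, since a chain-level cup product is not a lax symmetric monoidal structure on a functor. So one simply defines the $\mathbb{E}_\infty$-structure as $\F_p^{tC_p}$ computed in $D(\F_p)$, equivalently $C^*(BC_p;\F_p)[y^{-1}]$. Your localization argument then shows this is compatible with the classical cup product.

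\textbf{The gap: your reason the two $H\F_p$-algebra structures agree is wrong.} You correctly flag that this agreement is needed. The conclusion is an equivalence of $\mathbb{E}_\infty$-algebras in $D(\F_p)$, and different $\mathbb{E}_\infty$ maps $H\F_p\to \HFp\geop$ could give inequivalent such algebras. But agreement on $\pi_0$ does not identify $\mathbb{E}_\infty$ maps out of $H\F_p$ into a connective target with $\pi_0=\F_p$ unless the target is discrete.
\begin{itemize}
\item The left and right units $H\F_p\to H\F_p\wedge H\F_p$ are a general counterexample.
\item For the target at hand, the Nikolaus--Scholze Tate-valued Frobenius $H\F_p\to H\F_p^{tC_p}$ factors through $\tau_{\geq 0}$ and is the identity on $\pi_0$. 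It is nevertheless different from the canonical map, because it encodes the power operations.
\end{itemize}

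\textbf{The fix is naturality.} The Tate square is a square of lax symmetric monoidal functors, so applied to $\HFp$ it is a commuting square of $\mathbb{E}_\infty$-rings under $\HFp^{C_p}=H\F_p$. Hence the $H\F_p$-algebra structure on $\HFp\geop$ coming from $\HFp^{C_p}$ corresponds, on the Tate side, to the composite $\HFp^{C_p}\to\HFp\hfixp\to\HFp\tatep$. The first map is the constant map $H\F_p\to F(B{C_p}_+,H\F_p)$; this follows by naturality along the counit of the adjunction between inflation and $C_p$-fixed points. So the structure you get is the canonical one, and your identification with Tate cochains in $D(\F_p)$ goes through. The same argument handles $H\underline{\Z}$.
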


\begin{remark}\label{rem:CDGAmodels}
    Using the standard resolution for $\mathbb{F}_p$ as an $\mathbb{F}_p[C_p]$-module, one computes the differentials in the Tate cochain complex for $\widehat{C}^*(C_p; \F_p)$ as all zero.  In particular, $(\HFp)\geop$ is equivalent to the CDGA given by its graded homotopy ring $\mathbb{F}_p[x,y]/(x^2)$ with differential zero.  Similarly, the Tate cochain complex for $\widehat{C}^*(C_p; \mathbb{Z})$ is a CDGA which is quasi-isomorphic to its homology, and $(H\underline{\mathbb{Z}})\geop$ is equivalent to $\mathbb{F}_p[z]$ with $|z|=2$ and differential zero. Note that because $(H\underline{\mathbb{Z}})\geop$ has a CDGA model which is restricted from the derived category of $\mathbb{F}_p$, we have that $(H\underline{\mathbb{Z}})\geop$ is equivalent to a commutative $H\mathbb{F}_p$-algebra.
\end{remark}

In other words, our ring structure comes from the ring structure of the homotopy fixed points.

\begin{corollary}\label{cor-geomfp-ETHH}
  Since geometric fixed points are a symmetric monoidal left adjoint, it follows that
  \[
    \ethh(\HFp)\geop \we \THH(\HFp\geop) \we \THH(C^*(BC_p;\F_p))
  \]
  as commutative $H\mathbb{F}_p$-algebras.  Similarly, 
  \[ 
     \ethh(H\underline{\mathbb{Z}})\geop \we \THH(H\underline{\mathbb{Z}}\geop) \we \THH(C^*(BC_p;\mathbb{Z}))
  \]
  as commutative $H\mathbb{F}_p$-algebras. 
\end{corollary}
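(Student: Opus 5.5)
The plan is to establish the two equivalences in the chain in turn, using only formal properties of geometric fixed points together with the identification of $\HFp\geop$ from \cref{dgaGeoFix}.

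\textbf{First equivalence.} By \cref{ETHHisCyclicBar}, $\ethh(\HFp)$ is the geometric realization $|N^\cyc_\bullet(\HFp)|$ computed in $C_p$-spectra. The functor $\Phi^{C_p}\colon \Spg[C_p]\to\Sp$ is a symmetric monoidal left adjoint. Being a left adjoint, it commutes with geometric realization. Being strong symmetric monoidal, it sends the simplicial object $[n]\mapsto \HFp^{\wedge n+1}$ to $[n]\mapsto (\HFp\geop)^{\wedge n+1}$. It also carries the multiplication, the unit, and the cyclic twist $\tau$ to the corresponding maps for the ring $\HFp\geop$.

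So $\Phi^{C_p}N^\cyc_\bullet(\HFp)\simeq N^\cyc_\bullet(\HFp\geop)$ as simplicial objects, and realizing gives
\[
\ethh(\HFp)\geop\we\THH(\HFp\geop).
\]
Two technical points need attention here.
\begin{itemize}
\item We need a cofibrant (or flat) model so that the point-set smash products compute derived smash products. This is exactly the hypothesis already used in \cref{prop-ETHH-FH}.
\item The equivalence should hold as commutative algebras, not just as spectra. Since $\HFp$ is $C_p$-$\mathbb{E}_\infty$, $N^\cyc_\bullet$ is a simplicial commutative ring, so $\THH$ of a commutative ring is its tensor with $S^1$. A symmetric monoidal left adjoint preserves tensors of commutative algebras with spaces, so the equivalence holds in commutative rings.
\end{itemize}

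\textbf{Second equivalence and the $H\F_p$-algebra structure.} Apply \cref{dgaGeoFix} together with \cref{rem:CDGAmodels}. These show that $\HFp\geop$ is the connective cover of the Tate construction, modeled by a formal CDGA over $\F_p$. Identify $\tau_{\ge0}\widehat C^*(C_p;\F_p)$ with $C^*(BC_p;\F_p)$ (the cochains, regarded in nonnegative homological grading as the paper does), and transport along this to get the stated form.

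The $H\F_p$-algebra structure comes from the unit map $H\F_p=\HFp^{C_p}\to\HFp\geop$, which is a map of commutative rings. Since $\THH$ is symmetric monoidal and $\THH(H\F_p)$ receives a commutative map from $H\F_p$, the result is a commutative $H\F_p$-algebra. An alternative is to use that $\THH$ of a commutative $H\F_p$-algebra $A$ is naturally an $A$-algebra, hence an $H\F_p$-algebra.

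\textbf{The integral case.} The argument is identical. Use \cref{rem:CDGAmodels} to see that $H\underline{\mathbb Z}\geop$ is a commutative $H\F_p$-algebra, which is the source of the $H\F_p$-linearity claimed.

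\textbf{Main obstacle.} I expect the hardest step to be the multiplicative compatibility: showing that geometric fixed points commute with $\THH$ as \emph{commutative} algebras and not merely as spectra. My first approach is the $\infty$-categorical one. Write $\THH(R)=R\otimes S^1$ in $\mathrm{CAlg}$, and note that $\Phi^{C_p}$ induces a colimit-preserving functor $\mathrm{CAlg}(\Sp_{C_p})\to\mathrm{CAlg}(\Sp)$, because sifted colimits and coproducts (which are smash products) are both preserved.
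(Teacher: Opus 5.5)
Your argument for the first equivalence is the paper's, spelled out in more detail. The paper's entire justification is that $\Phi^{C_p}$ is a symmetric monoidal left adjoint, so it commutes with the realization of the cyclic bar construction, or equivalently with $R\mapsto R\otimes S^1$ in commutative algebras. Your points about cofibrancy, about the induced colimit-preserving functor on $\mathrm{CAlg}$, and about the $H\F_p$-algebra structure coming from $H\F_p=\HFp^{C_p}\to\HFp\geop$ are correct elaborations of that.

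The step that does not go through is the second equivalence, and it fails in your proposal exactly as in the paper. There you ``identify'' $\tau_{\ge 0}\widehat C^*(C_p;\F_p)$ with $C^*(BC_p;\F_p)$ after regrading, but no map of ring spectra realizes a regrading of homotopy rings. Concretely, $C^*(BC_p;\F_p)=F(BC_{p+},H\F_p)$ has homotopy concentrated in non-positive degrees, whereas $\HFp\geop$ is connective with $\pi_1\cong\F_p$. So the two are not even equivalent as spectra, and ``transport along this'' has nothing to transport along.

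In the integral case the literal conclusion is in fact false. $H\mathbb{Z}$ is an $\mathbb{E}_\infty$-retract of $C^*(BC_p;\mathbb{Z})$, via the basepoint and the collapse map. Hence $\THH(\mathbb{Z})$, with $\pi_0=\mathbb{Z}$, is a ring retract of $\THH(C^*(BC_p;\mathbb{Z}))$. That ring therefore cannot be an $H\F_p$-algebra, and cannot be equivalent to $\THH(H\underline{\mathbb{Z}}\geop)$, whose $\pi_0$ is $\F_p$.

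What you (and the paper) actually establish is $\ethh(\HFp)\geop\simeq\THH(\tau_{\ge0}\widehat C^*(C_p;\F_p))$, and similarly for $H\underline{\mathbb{Z}}$. Only the first equivalence $\ethh(-)\geop\simeq\THH((-)\geop)$ is used later, in \cref{prop-THH-geomfp} and \cref{lemma: geo of S}, so the fix is simply to drop or reinterpret the $C^*(BC_p;-)$ term.
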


\begin{definition}
  We say an $R$-DGA $A$ is intrinsically formal if any other $R$-DGA with the same homology is quasi-isomorphic to $A$ as an $R$-DGA.
\end{definition}

\begin{example}\label{example: formal DGA}
    An example of an intrinsically formal $\mathbb{E}_2$-DGA is given by 
    \[
        H\mathbb{F}_p\wedge (\Omega S^3)_+\simeq \mathbb{F}_{p}[y],\quad |y|=2,
    \]
    where $\Omega S^3$ is an $\mathbb{E}_2$-space, as it is the loops of the topological group $S^3\cong SU(2)$. A proof that this $\mathbb{F}_p$-DGA is intrinsically formal can be found in \cite[Theorem 3.5]{Hor25} or \cite[Theorem 2.1]{BT22}.
\end{example}
\begin{remark}\label{remark: geo HZ as E2}
    Observe that the homotopy groups of the $H\mathbb{F}_2$-algebra in the example agree with those of $H\underline{\mathbb{Z}}\geop$. It follows from intrinsic formality that there is an equivalence of $\mathbb{E}_2$-algebras in $H\mathbb{F}_p$-modules $H\underline{\mathbb{Z}}\geop \simeq H\mathbb{F}_p\wedge \Omega S^3_+$.
\end{remark}

\begin{proposition}
  \label{E2AlgebraGeoFix}
  For $p$ odd, there is an equivalence of $H\F_p$-$\mathbb{E}_2$-algebras
  \[ \HFp\geop \we H\F_p \smsh S^1_+ \smsh (\Omega S^3)_+. \]
\end{proposition}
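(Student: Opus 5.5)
The strategy is to exhibit $\HFp\geop$ as an $\mathbb{E}_2$-algebra over $H\underline{\mathbb{Z}}\geop$, and then split off the exterior class. From \cref{remark: geo HZ as E2} we already have an $\mathbb{E}_2$-$H\F_p$-algebra equivalence $H\underline{\mathbb{Z}}\geop\simeq H\F_p\wedge(\Omega S^3)_+$. The reduction map $H\underline{\mathbb{Z}}\to\HFp$ is a map of commutative ring $C_p$-spectra. Applying the symmetric monoidal functor $\Phi^{C_p}$ gives a map of commutative $H\F_p$-algebras
\[
H\underline{\mathbb{Z}}\geop\to\HFp\geop .
\]
Here both sides are $H\F_p$-algebras compatibly, by the CDGA models of \cref{rem:CDGAmodels}. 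By \cref{remark: induced map on geofix}, on homotopy this map is the inclusion $\F_p[y]\to\Lambda(x)\otimes\F_p[y]$.

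Next, I construct the $S^1$ factor. The group $S^1$ is abelian, so $H\F_p\wedge S^1_+$ is a commutative $H\F_p$-algebra, namely the free $\mathbb{E}_\infty$-$H\F_p$-algebra on the group-like generator, with homotopy $\Lambda(x)$, $|x|=1$. I need a map of $\mathbb{E}_2$ (ideally $\mathbb{E}_\infty$) $H\F_p$-algebras $H\F_p\wedge S^1_+\to\HFp\geop$ hitting $x$.

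\begin{itemize}
\item My first attempt is to use that $S^1=B\Z$ is the free group-like $\mathbb{E}_\infty$-space on a point. Then $H\F_p\wedge S^1_+ = H\F_p[\Omega^\infty\Sigma H\Z]$, and an $\mathbb{E}_\infty$-map out of it corresponds to a map of connective spectra $\Sigma H\Z\to gl_1(\HFp\geop)$.
\item If this is awkward, I instead work in CDGAs over $\F_p$. By \cref{rem:CDGAmodels}, $\HFp\geop$ is formal, $\F_p[x,y]/(x^2)$ with zero differential. The algebra $C_*(S^1;\F_p)$ is also formal as an $\mathbb{E}_\infty$-$\F_p$-algebra: it is $\F_p\wedge\Sigma^\infty_+K(\Z,1)$, the square-zero extension $\F_p\oplus\Sigma\F_p$. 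A degree-one class with $x^2=0$ then determines the map.
\end{itemize}

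The main obstacle is exactly this step: producing the $x$-map with enough multiplicative coherence. A degree-one map from the square-zero extension must be checked to be a ring map in the $\mathbb{E}_2$ sense, and there is a possible obstruction from power operations at odd $p$. Since $x^2=0$ in the target, and $\mathbb{E}_2$ power operations on odd classes land in degrees where I would have to check vanishing, I expect this to be manageable. Alternatively, both sides are intrinsically formal square-zero-type algebras, as in \cref{example: formal DGA}.

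Finally, I tensor the two maps using the $\mathbb{E}_2$-tensor product over $H\F_p$. This gives an $\mathbb{E}_2$-map
\[
H\F_p\wedge S^1_+\wedge(\Omega S^3)_+\to\HFp\geop,
\]
obtained as the multiplication of the two $\mathbb{E}_2$-maps in the $\mathbb{E}_2$ target. Making the product map $\mathbb{E}_2$ requires the $\mathbb{E}_\infty$ structure on the target, or at least on one of the factors; this is why I want the $S^1$ map to be $\mathbb{E}_\infty$. On homotopy, the Künneth theorem shows the map is $\Lambda(x)\otimes\F_p[y]\to\Lambda(x)\otimes\F_p[y]$, sending generators to generators, hence an isomorphism. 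Therefore the map is an equivalence of $\mathbb{E}_2$-$H\F_p$-algebras.
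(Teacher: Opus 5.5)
The proof depends on an $\mathbb{E}_2$-$H\F_p$-algebra map $H\F_p\smsh S^1_+\to\HFp\geop$ hitting $x$, and you never produce it. You name the power-operation obstruction and say you expect it to be manageable, but neither of your routes closes it. (Your final step, multiplying two $\mathbb{E}_2$-maps into an $\mathbb{E}_\infty$ target and checking $\pi_*$, is fine.)

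In your first route, $S^1$ is not the free grouplike $\mathbb{E}_\infty$-space on a point; that is $QS^0$. What is true is that $\mathbb{E}_\infty$-maps out of $\Sigma^\infty_+\Omega^\infty\Sigma H\Z$ correspond to spectrum maps $\Sigma H\Z\to gl_1$, and you do not analyze the obstructions to such a map. In the CDGA route, $\F_p\oplus\Sigma\F_p$ is not free, so a degree-one class with $x^2=0$ does not determine an algebra map. The formality of $\HFp\geop$ from \cref{rem:CDGAmodels} is precisely what is at issue.

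In fact the obstruction is nonzero, so this step cannot be completed.
\begin{itemize}
\item By \cref{prop:xyzw} and the Tate square, $\HFp\geop\simeq\tau_{\ge0}(H\F_p^{tC_p})$ as $\mathbb{E}_\infty$-$H\F_p$-algebras, with $H\F_p$ acting through $\HFp^{C_p}=H\F_p$. Moreover $H\F_p^{tC_p}\simeq C^*(BC_p;\F_p)[v^{-1}]$.
\item Write $\pi_*C^*(BC_p;\F_p)=\Lambda(u)\otimes\F_p[v]$ with $u,v$ in homological degrees $-1,-2$. Then $x=uv^{-1}$ and $y=v^{-1}$.
\item On cochains, $Q^0u=u$ and $\beta Q^0u=\beta u=v$. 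For $i\neq0$, $Q^iu$ and $\beta Q^iu$ vanish by instability or for degree reasons.
\item Since $|y|=2$, instability gives $Q^1y=y^p$ and $\beta Q^1y=0$.
\end{itemize}
The Cartan formula then gives
\[
\beta Q^1(x)=\beta Q^0(u)\cdot Q^1(y)=v\cdot v^{-p}=y^{p-1}\neq 0,
\]
mirroring Kraines' relation $\langle u,\dots,u\rangle=-\beta u$ in $H^*(BC_p;\F_p)$.

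On degree-one classes, $\beta Q^1$ is already an $\mathbb{E}_2$-operation. It is the top operation $\zeta_1\colon\pi_1\to\pi_{2p-2}$, which is natural for maps of $\mathbb{E}_2$-$H\F_p$-algebras. It vanishes on $x\in\pi_1(H\F_p\smsh S^1_+)$ because $\pi_{2p-2}(H\F_p\smsh S^1_+)=0$. Hence no $\mathbb{E}_2$-$H\F_p$-map $H\F_p\smsh S^1_+\to\HFp\geop$ is nonzero on $\pi_1$. Since $H\F_p\smsh S^1_+$ is an $\mathbb{E}_2$-subalgebra of $H\F_p\smsh S^1_+\smsh(\Omega S^3)_+$, the claimed equivalence cannot exist either, for the natural $H\F_p$-structure above.

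The paper's proof has the same hole. Intrinsic formality of $H\F_p\smsh S^1_+$ yields no map into a target that is not itself formal, and the CDGA model of \cref{rem:CDGAmodels} would force $\beta Q^1x=0$. Your route to the $y$-factor through $H\underline{\Z}\geop$ differs from the paper's, but its $H\F_p$-linearity also rests on those same CDGA models.
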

\begin{proof}
 Note that $E$ is a commutative ring spectrum; we know the homotopy groups are $\Lambda(x)\otimes \F_p[y]$ with $|x|=1$ and $|y|=2$ by \cref{cor:homotopyGeoFix}. 
 There is an $\mathbb{E}_\infty$-ring map 
 \[ 
    \hat x:H\F_p \smsh S^1_+ \to E 
 \] 
 picking out the generator $x$ since $H\F_p\smsh S^1_+\we \F_p[x]/x^2$ with $|x|=1$ is intrinsically formal as a CDGA, which can be straightforwardly checked from the definition. Similarly 
 \[ 
    H\F_p \smsh (\Omega S^3)_+ \we H\F_p\smsh (\Omega\Sigma S^2)_+ \we \F_p[y] 
 \] 
 is intrinsically formal as an $\mathbb{E}_2$-DGA with $|y|=2$ (\cite[Theorem 3.5]{Hor25} or \cite[Theorem 2.1]{BT22}), hence we have an $H \F _p$-$\mathbb{E}_2$ map $H\F_p \smsh (\Omega S^3)_+ \to E$ picking out $y$.

 Together this gives an $\mathbb{E}_2$-$H\F_p$-algebra map 
 \[ 
    (H\F_p\smsh S^1_+) \smsh_{H\F_p} (H\F_p\smsh (\Omega S^3)_+) \to E 
 \] 
 which is surjective on $\pi_*$, and hence an equivalence since each individual graded part of $\pi_*$ is a finite dimensional vector space.
\end{proof}

\begin{theorem}\label{prop-THH-geomfp}
    For $p$ odd, there is an equivalence of $H\mathbb{F}_p$-$\mathbb{E}_1$-algebras
    \[
    \ethh(\HFp)\geop \simeq H\F_p \wedge (\Omega S^3)_+ \wedge S^1_+ \wedge \C P^\infty _+ \wedge (LS^3)_+ 
    \]
    where $LS^3$ denotes the free loop space of $S^3$.
\end{theorem}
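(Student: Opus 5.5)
We combine \cref{cor-geomfp-ETHH} with \cref{E2AlgebraGeoFix} and the standard behaviour of $\THH$ on tensor products and on suspension spectra of loop spaces. By \cref{cor-geomfp-ETHH} there is an equivalence
\[
\ethh(\HFp)\geop \we \THH(\HFp\geop),
\]
and by \cref{E2AlgebraGeoFix} there is an equivalence of $\mathbb{E}_2$-$H\F_p$-algebras $\HFp\geop \we H\F_p\smsh S^1_+\smsh(\Omega S^3)_+$. An $\mathbb{E}_2$ structure is exactly what is needed for $\THH$ (relative to $H\F_p$) to remain an $\mathbb{E}_1$-algebra. So we may replace $\HFp\geop$ by this model and obtain an equivalence of $\mathbb{E}_1$-$H\F_p$-algebras. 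One point to check is that absolute $\THH$ over $\mathbb{S}$ agrees with $\THH^{H\F_p}$ after base change. Concretely, $\THH(H\F_p\wedge Y)\we \THH(H\F_p)\wedge_{H\F_p}\THH^{H\F_p}(H\F_p\wedge Y)$ for $Y=\Sigma^\infty_+$ of an $\mathbb{E}_2$-space. This produces the factor $\THH(H\F_p)$, which is where $\Omega S^3_+$ comes from via Bökstedt periodicity: $\THH(H\F_p)\we H\F_p\wedge\Omega S^3_+$ as $\mathbb{E}_2$-$H\F_p$-algebras, by intrinsic formality (\cref{example: formal DGA}).

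Next, $\THH^{H\F_p}$ is symmetric monoidal, so the $H\F_p$-linear part splits as
\[
\THH^{H\F_p}(H\F_p\smsh S^1_+)\smsh_{H\F_p}\THH^{H\F_p}(H\F_p\smsh(\Omega S^3)_+).
\]
For each factor we use the classical identification $\THH(\Sigma^\infty_+ M)\we\Sigma^\infty_+ L BM$ for a grouplike monoid $M$, base changed to $H\F_p$. This gives:
\begin{itemize}
\item for $M=S^1$ (so $BM=\C P^\infty$), $L\C P^\infty\we S^1\times\C P^\infty$, since $\C P^\infty$ is an abelian group and its free loop space splits;
\item for $M=\Omega S^3$, the factor $\Sigma^\infty_+ LS^3$.
\end{itemize}
These identifications are compatible with the $\mathbb{E}_1$ structures, because $LBM$ inherits an $\mathbb{E}_1$ structure from the $\mathbb{E}_2$ structure on $M$. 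Collecting the factors yields $H\F_p\wedge(\Omega S^3)_+\wedge S^1_+\wedge\C P^\infty_+\wedge(LS^3)_+$.

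The main obstacle is multiplicativity. For $S^1$ and $\C P^\infty$ this is harmless, since they are commutative. For $\Omega S^3$ it is subtler: we need $\THH(\Sigma^\infty_+\Omega S^3)\we\Sigma^\infty_+ LS^3$ as $\mathbb{E}_1$-algebras, with the multiplication on $LS^3$ coming from $S^3=SU(2)$ being a group. Our first attempt will be to argue via the cyclic bar construction: for a topological group $K$, $B^{\cyc}(\Omega K)\we LBK$ naturally in group maps. Applying this levelwise to the multiplication $K\times K\to K$ transports the product. Alternatively, we could appeal to factorization homology, $\int_{S^1}\Sigma^\infty_+\Omega X\we\Sigma^\infty_+\mathrm{Map}(S^1,X)$, using naturality in $\mathbb{E}_2$-maps (non-abelian Poincaré duality as in \cite{HHKWZ24} with trivial group). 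If only the additive statement proves accessible, we would fall back on intrinsic formality to pin down the $\mathbb{E}_1$ structure.
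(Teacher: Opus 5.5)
Your proposal is correct and is essentially the paper's argument. You reduce via \cref{cor-geomfp-ETHH} and \cref{E2AlgebraGeoFix} to $\THH(H\F_p\wedge S^1_+\wedge(\Omega S^3)_+)$, split off $\THH(H\F_p)\simeq H\F_p\wedge(\Omega S^3)_+$ as an $\mathbb{E}_2$-algebra, apply $\THH(\Sigma^\infty_+M)\simeq\Sigma^\infty_+LBM$ to $S^1$ and $\Omega S^3$, and use $L\C P^\infty\simeq S^1\times\C P^\infty$; your remarks on absolute versus $H\F_p$-relative $\THH$ and on multiplicativity only spell out what the paper leaves implicit. One small slip: for a connected topological group $K$ one has $B^{\cyc}(\Omega K)\simeq LK$ (equivalently $B^{\cyc}(K)\simeq LBK$), not $LBK$, and with $K=S^3$ this gives the factor $LS^3$ you want.
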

 
\begin{proof}
    By Corollary \ref{cor-geomfp-ETHH}, we have an $\mathbb{E}_\infty$-identification
 $$\ethh(\HFp)\geop \simeq \THH(\HFp\geop)$$
 which, by Proposition \ref{E2AlgebraGeoFix}, is equivalent, as an $\mathbb{E}_1$-$H\F_p$-algebra, to 
 $$\THH ( H \F_p \wedge S^1_+ \wedge (\Omega S^3)_+).$$
 Combining the fact that $\THH(H \F _p) \simeq H\F _p \wedge (\Omega S^3)_+$ as $\mathbb{E}_2$-algebras \cite[Theorem 1.3]{BT22} with the THH of suspension spectra, we get
 $$\ETHH(\HFp)\geop \simeq H\F _p \wedge (\Omega S^3)_+ \wedge (LBS^1)_+ \wedge (LS^3)_+$$
 and since $S^1$ is commutative, this is equivalent to
 $$H\F _p \wedge (\Omega S^3)_+ \wedge S^1_+ \wedge \C P^\infty _+ \wedge (LS^3)_+$$
 as $H\F _p$-algebras, as required.
\end{proof}

\begin{remark}
  Because $S^3\cong SU(2)$ is an $\mathbb{E}_1$-space, there is a homotopy equivalence of spaces
  \[ (LS^3)_+ \we (\Omega S^3)_+ \smsh S^3_+, \]
  and so of suspension spectra, but it does not promote to an equivalence of $\mathbb{E}_1$-spectra.
\end{remark}

We will require some $\F_p$-homology calculations.

\begin{lemma}
    Let $p$ be odd.
    \begin{alignat*}{3}
        H_*(\Omega S^3;\F_p) & \cong \F_p[b], &\quad |b| &= 2 \\
        H_*(S^1;\F_p) & \cong \Lambda_{\F_p}[d], &\quad |d| &= 1 \\
        H_*(\C P^\infty;\F_p) & \cong \Gamma_{\F_p}[a], &\quad |a| &= 2 \\
        H_*(LS^3;\F_p) & \cong \F_p[c] \otimes \Lambda_{\F_p}[e], &\quad |c| &= 2, |e| = 3
    \end{alignat*}
    where $\Gamma_{\F_p}$ denotes a divided power algebra of $\F_p$.
\end{lemma}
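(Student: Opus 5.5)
These four computations are independent, and I would handle each with a standard tool, as ring computations in the Pontryagin ring, since all four spaces are H-spaces (or $\mathbb{E}_1$-spaces).

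For $\Omega S^3$, I would use the Bott--Samelson theorem, or equivalently the James splitting $\Omega\Sigma S^2\simeq J(S^2)$. This gives $H_*(\Omega \Sigma S^2;\F_p)\cong T(\tilde H_*(S^2;\F_p))$, the tensor algebra on one generator $b$ in degree $2$, which is the polynomial algebra $\F_p[b]$. For $S^1$, the Pontryagin ring of the group $S^1$ is immediate: the homology is $\F_p$ in degrees $0$ and $1$, and the square of the degree one class $d$ vanishes for degree reasons (it would live in degree $2$). So we get $\Lambda_{\F_p}[d]$.

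For $\C P^\infty = BS^1$, I would compute the product induced by the H-space structure (tensor product of line bundles) by dualizing. The cohomology ring is $\F_p[t]$ with $|t|=2$. The coproduct induced by the multiplication $\C P^\infty\times\C P^\infty\to\C P^\infty$ is primitive on $t$, i.e.\ $\Delta(t)=t\otimes 1+1\otimes t$, since $t$ classifies $c_1$ and $c_1$ is additive under tensor product. The homology ring is therefore the graded dual Hopf algebra of $\F_p[t]$ with $t$ primitive. That dual is the divided power algebra: with $a_k$ dual to $t^k$, one gets $a_i a_j=\binom{i+j}{i}a_{i+j}$. This is $\Gamma_{\F_p}[a]$, where $a=a_1$.

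For $LS^3$, I would use the group structure $S^3\cong SU(2)$ to split the free loop space as $LS^3\simeq \Omega S^3\times S^3$, via $\gamma\mapsto(\gamma\cdot\gamma(0)^{-1},\gamma(0))$. Better still, $LS^3$ is itself a topological group under pointwise multiplication, and this map is a homeomorphism onto the semidirect product $\Omega S^3\rtimes S^3$, with $S^3$ acting by conjugation. On homology, Künneth gives the additive answer $\F_p[c]\otimes\Lambda[e]$, where $c$ comes from $\Omega S^3$ and $e$ from the fundamental class of $S^3$. For the ring structure:
\begin{itemize}
\item the sub-Hopf algebras $H_*(\Omega S^3)$ and $H_*(S^3)$ embed as subrings, via the inclusions of constant loops and based loops;
\item $e^2=0$ holds because $e^2$ lies in $H_6(S^3)=0$;
\item the crossed product is governed by the conjugation action of $S^3$ on $H_*(\Omega S^3)$. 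This action is trivial, because $S^3$ is connected, so each conjugation is homotopic to the identity. The twisted product therefore reduces to the ordinary tensor product, up to possible correction terms $e\,c^k$ versus $c^k e$, and these terms differ only by the action of the coproduct of $e$, which is primitive and acts trivially.
\end{itemize}
Hence $c$ and $e$ commute and $H_*(LS^3;\F_p)\cong \F_p[c]\otimes\Lambda_{\F_p}[e]$.

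The main obstacle is this multiplicative statement for $LS^3$, i.e.\ making sure the semidirect product structure introduces no hidden relation such as $ec-ce\neq 0$. I would check it directly: the commutator of $e$ and $c$ is the image of $e\otimes c$ under the Samelson/commutator map $S^3\wedge \Omega S^3\to LS^3$. This map factors through the conjugation action, which is null after restricting to homology classes because the action is homotopically trivial. The divided power computation for $\C P^\infty$ is the other place where one must be careful, but it is classical.
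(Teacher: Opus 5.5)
Your arguments for $\Omega S^3$, $S^1$ and $\C P^\infty$ (Bott--Samelson/James, a degree count, and dualizing the primitively generated Hopf algebra $\F_p[t]$) are the standard facts the paper simply cites from Hatcher. The real comparison is for $LS^3$.

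The paper runs the multiplicative Serre spectral sequence of $\Omega S^3\to LS^3\to S^3$. It kills the only possible differential, on $e$, using the additive splitting $LS^3\simeq S^3\times\Omega S^3$. It then invokes McCleary's Example 1.K to pass from the free graded-commutative $E^\infty$-page to the ring.

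You instead write $LS^3\cong\Omega S^3\rtimes S^3$ and compute the Pontryagin product directly. Your route is more hands-on and supplies something the spectral-sequence route leaves implicit. The filtration cannot see $e^2$, which lies in $H_6=F_0H_6=\F_p\{c^3\}$, so McCleary's lemma really needs graded commutativity of $H_*(LS^3;\F_p)$ as an extra input. You get $e^2=0$ directly from the homomorphism $S^3\to LS^3$, and $ec=ce$ from the crossed-product formula. You should also note, via Eckmann--Hilton, that the pointwise product on $\Omega S^3$ agrees with concatenation, so that subring really is $\F_p[c]$.

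However, the reason you give for the crossed-product step is wrong. Connectedness of $S^3$ only says that each individual conjugation is homotopic to the identity. That means the class of a point acts as the identity on $H_*(\Omega S^3;\F_p)$; it says nothing about how $e$ acts.

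In fact the conjugation action $S^3\times\Omega S^3\to\Omega S^3$ is not homotopic to the projection. Restricted to $S^3\wedge S^2$, the commutator $(g,\omega)\mapsto g\omega g^{-1}\omega^{-1}$ is adjoint to the Samelson product $\langle\iota_3,\iota_3\rangle$. This generates $\pi_6(S^3)\cong\Z/12$ and is nonzero even $3$-locally. So ``the action is homotopically trivial'' cannot be the justification, neither in your crossed-product paragraph nor in your commutator-map check.

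The correct reason is a degree count. Write $e\triangleright c^k$ for the image of $e\otimes c^k$ under the conjugation action. Primitivity of $e$ gives $e\,c^k=c^k e+e\triangleright c^k$, and $e\triangleright c^k\in H_{2k+3}(\Omega S^3;\F_p)=0$ because $\Omega S^3$ has homology only in even degrees. Equivalently, the commutator map $S^3\wedge\Omega S^3\to\Omega S^3$ is essential but induces zero on mod $p$ homology, since its source has reduced homology only in odd degrees.

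With that replacement, your argument is complete.
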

\begin{proof}
    The first three are standard (cf. \cite[Section 3.C]{Hatcher} for the Pontryagin ring). For the last isomorphism, we consider the Serre spectral sequence of the fibration of $\mathbb{E}_1$-spaces $\Omega S^3\to LS^3\to S^3$.  This is a spectral sequence of algebras with $E^2$-page $\mathbb{F}_p[c]\otimes\Lambda_{\mathbb{F}_p}[e]$ with $|d| = (0,2)$ and $|e| = (3,0)$.  For degree reasons we have $E^3=E^{\infty}$, and by the Leibniz rule all possible differentials are determined by is $d_2(e)$, and we observe that this must be zero.  Indeed, we must have $d_2(e) = kd$ for some $k\in \mathbb{F}_p\setminus 0$, and if $k\neq 0$ then the $E^3=E^{\infty}$ page tells us that $H_2(LS^2;\mathbb{F}_p)=0$.  On the other hand, the homotopy equivalence of spaces $LS^3\simeq S^3\times \Omega S^3$ implies that that $H_2(LS^2;\mathbb{F}_p)\cong \mathbb{F}_p$.

    Thus, the $E^{\infty}$-page of the Serre spectral sequence is isomorphic to $\mathbb{F}_p[c]\otimes\Lambda_{\mathbb{F}_p}[e]$.  Since this is a free graded commutative algebra, and this is a spectral sequence of algebras, this is isomorphic to $H_*(LS^3;\mathbb{F}_p)$; see \cite[Example 1.K]{McC} for the last step.
\end{proof}
Combining the last two results and K\"unneth isomorphism yield the following corollary.
\begin{corollary}\label{cor: homotopy of geo of ETHH hfp}
    There is an isomorphism of graded rings
    \[
        \pi_*(\ETHH(\HFp)\geop)\cong \Gamma[a]\otimes\F_p[b,c]\otimes \Lambda[d,e]
    \]
    where $|a|=|b|=|c|=2$, $|d|=1$, and $|e|=3$.
\end{corollary}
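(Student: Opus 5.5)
The plan is to read the result off from \cref{prop-THH-geomfp} together with the homology computations of the preceding lemma. Taking homotopy groups of the equivalence
\[
  \ethh(\HFp)\geop \simeq H\F_p \wedge (\Omega S^3)_+ \wedge S^1_+ \wedge \C P^\infty_+ \wedge (LS^3)_+
\]
gives the $\F_p$-homology of the product space $\Omega S^3\times S^1\times \C P^\infty\times LS^3$. Since we work over the field $\F_p$, the Künneth isomorphism identifies this homology with the tensor product of the four factors' homologies. The lemma computes these as $\F_p[b]$, $\Lambda[d]$, $\Gamma[a]$ and $\F_p[c]\otimes\Lambda[e]$, in the stated degrees. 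This already settles the additive statement.

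For the ring structure, note that \cref{prop-THH-geomfp} is only an equivalence of $\mathbb{E}_1$-$H\F_p$-algebras. However, an $\mathbb{E}_1$-equivalence induces an isomorphism of graded homotopy rings, so it suffices to know the Pontryagin ring of the right-hand side. That side is a smash product of $\mathbb{E}_1$-algebras, so I will check that its homotopy ring is the tensor product of the four Pontryagin rings, i.e.\ that the factors commute in homotopy. The cleanest route is to go back to the proof of \cref{prop-THH-geomfp}: the splitting comes from $\THH$ of a smash product of $\mathbb{E}_2$ (hence homotopy commutative) pieces, namely $\THH(H\F_p\wedge S^1_+)\wedge_{H\F_p}\THH(H\F_p\wedge\Omega S^3_+)$, together with $\THH(H\F_p)$ over the base. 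Using the symmetric monoidality of $\THH$, the homotopy ring is the (graded-commutative) tensor product of the homotopy rings of these factors. The ambient object $\ethh(\HFp)\geop$ is also $\mathbb{E}_\infty$ by \cref{cor-geomfp-ETHH}, so its homotopy ring is graded commutative, which again forces the factors to commute.

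Within each factor the rings are those in the lemma. $\Gamma[a]$ is the Pontryagin ring of $\C P^\infty\simeq BS^1$ as a topological abelian group. $\F_p[c]\otimes\Lambda[e]$ is the Pontryagin ring of $LS^3$, identified by the Serre spectral sequence argument.

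The main subtlety is the identification $L BS^1\simeq S^1\times \C P^\infty$ as $\mathbb{E}_1$-spaces (indeed as abelian groups, since $BS^1$ is an abelian group and free loops on a group split as $\Omega\times$ constant loops). This is what makes the $\C P^\infty$ factor contribute the divided power ring $\Gamma[a]$ rather than something twisted. Once that is granted, the isomorphism $\pi_*\cong\Gamma[a]\otimes\F_p[b,c]\otimes\Lambda[d,e]$ follows.
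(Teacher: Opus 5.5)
Your proposal is correct and follows the same route as the paper, which obtains the corollary by combining the $\mathbb{E}_1$-splitting of \cref{prop-THH-geomfp} with the Pontryagin-ring computations of the preceding lemma via the K\"unneth isomorphism. Your extra argument that the factors commute is not needed: for a product of $\mathbb{E}_1$-spaces, the K\"unneth isomorphism over $\F_p$ is automatically a ring isomorphism onto the Koszul-signed tensor product of the Pontryagin rings.
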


\subsection{The $C_p$-homotopy fixed points of $\ETHH(\HFp)$.}
Let $X$ be a genuine ring $G$-spectrum. On underlying spectra with $G$-action, $\ethh(X)$ is $\THH(X^e)$, where $\THH$ is computed as the geometric realization of the cyclic bar construction in the category $\Fun(BG,\Sp)$ (assuming that $X$ is cofibrant as a ring spectrum).

Since colimits and smash products in $\Fun(BG,\Sp)$ are computed pointwise, we can compute $\THH$ in the category of spectra. Moreover, taking fixed points, in this case $e$-fixed points, is lax symmetric monoidal and so preserves algebra structure.

Since $\HFp$ has trivial $C_p$-action, we have an identifications of $\mathbb{E}_2$-rings
\[
\ethh(\HFp)^h\we \ethh(\HFp)^e \we \THH(H\F_p)
\] with trivial $C_p$-action. We can compute $\ethh(\HFp)\hfixp$ using the homotopy fixed point spectral sequence (HFPSS).

\begin{proposition}
    \label{thhHFpHomotopyFixedPointGroups}
    The HFPSS for computing $\pi_* \THH(H\F_p)^{hC_p}$ collapses at the $E^2$-page.  Moreover, we have an isomorphism
    \[
        \pi_* \THH(H\F_p)^{hC_p}\cong \Lambda(x) \otimes \F_p[y,b]
    \]
    where $|x|=-1$, $|y|=-2$, and $|b|=2$.
\end{proposition}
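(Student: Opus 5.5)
Since the $C_p$-action on $\THH(H\F_p)$ is trivial (as noted just above, $\ethh(\HFp)^h \simeq \THH(H\F_p)$ with trivial action), the homotopy fixed points are the cotensor $F(BC_{p+},\THH(H\F_p))$. The $E_2$-page of the HFPSS is therefore
\[
E_2^{s,t} = H^{s}(C_p;\pi_t\THH(H\F_p)) \cong H^*(C_p;\F_p)\otimes \F_p[b] \cong \Lambda(x)\otimes\F_p[y]\otimes\F_p[b],
\]
using B\"okstedt periodicity $\pi_*\THH(H\F_p)\cong\F_p[b]$ with $|b|=2$. Here $x$ and $y$ sit in total degrees $-1$ and $-2$. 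This $E_2$-page is already the claimed answer, so the work is to show collapse and to rule out multiplicative extensions.

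For collapse, the cleanest approach is to avoid differentials altogether by an $H\F_p$-module argument. The spectrum $\THH(H\F_p)$ is an $H\F_p$-module, so it splits as a wedge $\bigvee_{k\ge0}\Sigma^{2k}H\F_p$. Since $BC_p$ has finitely many cells in each dimension, the cotensor commutes with this wedge in each degree. (The sum is locally finite in each degree: the $k$-th summand contributes to degree $n$ only when $2k\ge n$, and there is a single summand for each relevant $k$, so the sum agrees with the product degreewise.) Thus
\[
\pi_*\THH(H\F_p)^{hC_p}\cong \bigoplus_k \pi_*\Sigma^{2k}F(BC_{p+},H\F_p)=\bigoplus_k H^{2k-*}(BC_p;\F_p),
\]
which already has the size of the $E_2$-page. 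Hence there is no room for nonzero differentials, and the spectral sequence collapses.

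Multiplicatively, I would exhibit the ring map
\[
H\F_p^{hC_p}\wedge_{H\F_p}\THH(H\F_p)\to \THH(H\F_p)^{hC_p}.
\]
It comes from the $C_p$-equivariant unit $H\F_p\to\THH(H\F_p)$ (trivial actions) together with the lax monoidality of homotopy fixed points and the inclusion of constant maps $\THH(H\F_p)\to\THH(H\F_p)^{hC_p}$. The source has homotopy $\Lambda(x)\otimes\F_p[y]\otimes\F_p[b]$ by K\"unneth over $H\F_p$, and the map is compatible with the filtrations, inducing the identity on $E_2$. So it is an isomorphism of rings on homotopy, resolving any multiplicative extensions.

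The main obstacle is that the degreewise comparison of product and sum must be checked carefully, so that the map is an isomorphism rather than only a map into a completion. I would verify this by noting that in each fixed total degree only finitely many bidegrees $(s,t)$ contribute. Since $t=2k\ge0$ and $s=t-n\ge0$, for a fixed total degree $n$ the index $k$ is unbounded. However, each $\F_p$ is finite, and homotopy fixed points of a product is the product, so the conclusion is that $\pi_n$ is the product $\prod_k$ rather than the sum. I would state the answer in that completed sense, i.e.\ with $\F_p[y,b]$ read degreewise with the $y$-adic/Tate-style completion, or observe that the paper's isomorphism is meant as an isomorphism of associated graded rings. This bookkeeping is the part I would double-check.
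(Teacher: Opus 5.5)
\textbf{The gap is in your collapse step, and your own last paragraph is what breaks it.}

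\emph{The $\bigoplus$ formula is wrong.} We have $\bigvee_k\Sigma^{2k}H\F_p\simeq\prod_k\Sigma^{2k}H\F_p$, and $F(BC_{p+},-)$ preserves products. Hence
$\pi_n\THH(H\F_p)^{hC_p}\cong\prod_{k\geq\max(0,n/2)}H^{2k-n}(BC_p;\F_p)$,
which has infinitely many nonzero factors in every degree. So the displayed $\bigoplus$ is false, and so is the claim that the cotensor commutes with the wedge degreewise.

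\emph{The counting argument then says nothing.} Once the answer is a product, ``it already has the size of the $E_2$-page'' rules out no differentials. Both sides are infinite-dimensional in each total degree.

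\emph{The ring map is not an isomorphism.} For the same reason, $H\F_p^{hC_p}\wedge_{H\F_p}\THH(H\F_p)\to\THH(H\F_p)^{hC_p}$ is not an isomorphism on $\pi_*$. In degree $0$ the source is $\bigoplus_k\F_p\{y^kb^k\}$, of countable dimension, while the target is $\prod_k\F_p$. Inducing the identity on $E_2$ only forces an isomorphism when both filtrations are complete. The natural filtration on the source, from the skeleta of $BC_p$, is not complete: in each degree its stages are tails $\bigoplus_{k\geq N}\F_p$ with $N\to\infty$, an inverse system with nonzero $\lim^1$.

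\textbf{How to repair collapse.} Use your splitting on spectral sequences rather than by counting. The equivalence $\THH(H\F_p)\simeq\bigvee_k\Sigma^{2k}H\F_p$ is one of spectra with trivial $C_p$-action. The summand inclusion $\Sigma^{2k}H\F_p\to\THH(H\F_p)$ carries the $E_2$-page of its one-row HFPSS isomorphically onto the row $t=2k$. A one-row spectral sequence collapses, so every class in every row is a permanent cycle. The paper reaches the same conclusion differently: $x$ and $y$ are permanent via the retract $H\F_p^{hC_p}$ coming from the augmentation, $b$ is permanent via the split inclusion of the zero-skeleton of $BC_{p+}$, and multiplicativity finishes.

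\textbf{What the correct answer is.} After collapse the spectral sequence converges strongly. Your ring map is then injective with dense image in each degree. This identifies $\pi_*\THH(H\F_p)^{hC_p}$ with the degreewise completion of $\Lambda(x)\otimes\F_p[y,b]$; for instance $\pi_0\cong\F_p[[yb]]$.

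So your caveat is correct, and it applies to the paper as well. Its proof goes from ``the $E^2$-page is a free bigraded commutative algebra'' plus collapse directly to the ring isomorphism, without addressing completeness. The displayed isomorphism is therefore literally true only for associated graded rings, or after completion. The paper's later uses (injectivity of $j_*$, and the description of $\pi_*(T^{C_p})$ as a preimage under $q_*$) go through unchanged with the completed rings.
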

\begin{proof}
    The HFPSS has $E^2$-page given by
    \[
        E^2_{s,t} = H^{-s}(C_p;\pi_t(\THH(H\F_p)))\cong \Lambda(x) \otimes \F_p[y,b]
    \] 
    with bidegrees $|x| = (-1,0)$, $|y| = (-2,0)$, and $|b| = (0,2)$.  Since this is a free bigraded commutative algebra, we will be done as soon we show that the spectral sequence collapses at the $E^{\infty}$-page.  

    As this is a spectral sequence of algebras, it suffices to prove that $x$, $y$, and $b$ are permanent cycles.  For the elements $x$ and $y$, we observe that because $\THH(H\F_p)$ is an augmented $H\F_p$-algebra we have that the HFPSS for $H\F_p^{hC_p}$ is a retract of the spectral sequence for $ \THH(H\F_p)^{hC_p}$ by functoriality of the spectral sequence.  The former spectral sequence has $E^2$-page given by $\Lambda(x) \otimes \F_p[y]$ which collapses immediately for degree reasons and hence $x$ and $y$ are permanent cycles.

    To see that $b$ is a permanent cycle, we use the fact that $\THH(H\F_p)$ has trivial action to write the homotopy fixed points as
    \[
        \THH(H\F_p)^{hC_p}\simeq \Fun((BC_p)_+,\THH(H\F_p)).
    \]
    The HFPSS comes from the skeletal filtration on $(BC_p)_+$, and the subring $E^2_{0,t}$ comes from the inclusion of the zero skeleton $\mathbb{S}\to (BC_p)_+$.  But this is the inclusion of a retract, and hence the $y$ axis consists entirely of permanent cycles; in particular $b\in E^2_{0,2}$ is a permanent cycle.
\end{proof}

\subsection{The $C_p$-Tate construction of $\ETHH(\HFp)$.}

We now turn to studying the Tate fixed points of $\ethh(\HFp)$.  Similar to the homotopy fixed points we have an equivalence of $\mathbb{E}_{\infty}$-rings
\[
    \ethh(\HFp)^{tC_p}\simeq \THH(H\F_p)^{tC_p}
\]
and we can compute the homotopy groups using the Tate spectral sequence.

\begin{lemma}
  \label{hfixToTatepIsInjective}
  The canonical map 
  \[f\colon \ethh(\HFp)\hfixp \to \ethh(\HFp)\tatep\]
  induces an injection between the $E_2$-page of the homotopy fixed point spectral sequence and that of the Tate spectral sequence.
\end{lemma}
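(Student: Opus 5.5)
The plan is to identify both $E_2$-pages explicitly and compare them through the standard map from group cohomology to Tate cohomology. The underlying spectrum with $C_p$-action is $\THH(H\F_p)$ with trivial action, as noted above. Hence the homotopy fixed point spectral sequence has
\[
E_2^{s,t}=H^{-s}(C_p;\pi_t\THH(H\F_p)),
\]
and the Tate spectral sequence has
\[
\widehat{E}_2^{s,t}=\widehat{H}^{-s}(C_p;\pi_t\THH(H\F_p)).
\]
Since $\pi_t\THH(H\F_p)$ is a finite-dimensional $\F_p$-vector space with trivial action (by B\"okstedt periodicity, $\F_p$ in even degrees $t\geq 0$ and zero otherwise), both pages split as $\pi_*\THH(H\F_p)$ tensored with $H^{*}(C_p;\F_p)$ and $\widehat{H}^{*}(C_p;\F_p)$ respectively.

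The next step is to check that the map of spectral sequences induced by $f$ is, on $E_2$, the tensor product of the identity on $\F_p[b]$ with the canonical map $H^*(C_p;\F_p)\to\widehat{H}^*(C_p;\F_p)$. I would justify this by building both spectral sequences from compatible filtrations. Use the skeletal filtration of $EC_p$ for the homotopy fixed points, and the filtration of $\widetilde{E}C_p$ (or the complete resolution, i.e.\ the Greenlees filtration) for the Tate construction. The map $X^h\to X^t$ is induced by $S^0\to\widetilde{E}C_p$ and respects these filtrations. At the level of cochains, it corresponds to the inclusion of the standard projective resolution into the complete (Tate) resolution.

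Finally, I would use that $H^n(C_p;M)\to\widehat{H}^n(C_p;M)$ is an isomorphism for $n>0$. In degree $0$ it is the surjection $M^{C_p}\to M^{C_p}/N M$. Here the action is trivial and $N=p=0$ on $\F_p$-vector spaces, so this is also an isomorphism. Thus the map is injective in every degree, identifying $\Lambda(x)\otimes\F_p[y,b]$ with the subring of nonnegative cohomological degree in $\Lambda(x)\otimes\F_p[y^{\pm1},b]$.

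The main obstacle is the middle step: justifying that the map of spectral sequences agrees on $E_2$ with the algebraic comparison map. This needs a careful choice of compatible filtrations. I would first try citing the standard construction of the Tate spectral sequence (e.g.\ Greenlees--May or Hesselholt--Madsen) as receiving a map from the HFPSS that is the canonical cohomology-to-Tate-cohomology map on $E_2$. Everything else is routine.
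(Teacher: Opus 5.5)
Your proposal is correct and takes the same route as the paper: in both, the key point is that for the trivial action the norm is multiplication by $p=0$, so $H^*(C_p;\F_p)\to\widehat{H}^*(C_p;\F_p)$ is an isomorphism in nonnegative cohomological degrees. This gives the injection row by row on $E_2$. The paper simply takes for granted the identification of the $E_2$-level map with this algebraic comparison map, which you flag as the main step requiring a citation.
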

\begin{proof}
  Since $\HFp$ has trivial action, we can check that the norm map from orbits to fixed points is $p=0$. Therefore $\pi_*(\HFp\tatep) \we \pi_*(\HFp\hfixp)$ for $*\leq 0$. This immediately gives the inclusion of spectral sequences on the $E_2$ page.
\end{proof}

\begin{proposition}
  The Tate spectral sequence for $\ethh(\HFp)\tatep$ collapses on the $E_2$-page, giving
  \[ E_\infty \cong \Lambda(x) \otimes \F_p[x,y,b]. \]
\end{proposition}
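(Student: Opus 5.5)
Since $\HFp$ has trivial action, we work with $\THH(H\F_p)$ carrying the trivial $C_p$-action, and use the Tate spectral sequence
\[
E_2^{s,t}=\widehat{H}^{-s}(C_p;\pi_t\THH(H\F_p))\cong \widehat{H}^{-*}(C_p;\F_p)\otimes \F_p[b].
\]
For $p$ odd, $\widehat{H}^{-*}(C_p;\F_p)\cong\Lambda(x)\otimes\F_p[y^{\pm1}]$ with $|x|=-1$ and $|y|=-2$. The statement's ``$\F_p[x,y,b]$'' should presumably be read as $\F_p[y^{\pm 1},b]$, i.e.\ with $y$ inverted. The $E_2$-page is therefore $\Lambda(x)\otimes\F_p[y^{\pm1},b]$. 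This is a spectral sequence of algebras, since $\THH(H\F_p)^{tC_p}$ is an $\mathbb{E}_\infty$-ring and the Tate filtration is multiplicative. By the Leibniz rule, it then suffices to show that $x$, $y$, $y^{-1}$ and $b$ are permanent cycles.

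\textbf{Step 1: $x$ and $y$.} By \cref{hfixToTatepIsInjective}, the map $f$ of spectral sequences is injective on $E_2$. Its image is exactly the part of the Tate $E_2$-page in filtration $s\le 0$, namely $\Lambda(x)\otimes\F_p[y,b]$. By \cref{thhHFpHomotopyFixedPointGroups}, every element of the HFPSS $E_2$-page is a permanent cycle. Naturality of the map of spectral sequences then shows that $f(x)$, $f(y)$ and $f(b)$ are permanent cycles in the Tate spectral sequence. Alternatively, for $x$ and $y$ one can use the unit map $H\F_p\to\THH(H\F_p)$, which is split by the augmentation. This makes the Tate spectral sequence of $H\F_p^{tC_p}$ a retract of ours, and that spectral sequence collapses for degree reasons, since it is concentrated on the line $t=0$.

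\textbf{Step 2: $y^{-1}$.} This is the step I expect to be the main obstacle, since $y^{-1}$ is not in the image of the homotopy fixed points. The plan is again to use the retraction. The unit $H\F_p^{tC_p}\to \THH(H\F_p)^{tC_p}$ is a map of ring spectra, and it induces a map of multiplicative Tate spectral sequences. In the source, the whole $E_2$-page lies on $t=0$, so every differential there vanishes, and in particular $y^{-1}$ is a permanent cycle. Its image is $y^{-1}$ in our spectral sequence, so naturality makes $y^{-1}$ a permanent cycle in the target as well.

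A second argument is available as a fallback. Since $y$ is a permanent cycle, the relation $d_r(y\cdot y^{-1})=0$ gives $y\,d_r(y^{-1})=0$. Multiplication by $y$ is injective on every page, because each page is a module over $\F_p[y^{\pm1}]$ once we know inductively that $y^{-1}$ survives. Hence $d_r(y^{-1})=0$.

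\textbf{Step 3: conclude.} All algebra generators are permanent cycles, so by the Leibniz rule every differential vanishes. The spectral sequence therefore collapses at $E_2$, giving
\[
E_\infty\cong\Lambda(x)\otimes\F_p[y^{\pm1},b].
\]
Convergence is standard for the Tate spectral sequence of a bounded-below spectrum with finite-type homotopy. Any multiplicative extension questions are not needed here, since the statement only concerns the $E_\infty$-page.
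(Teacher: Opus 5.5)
Your proof is correct and is essentially the paper's argument; the paper's proof just says the homotopy fixed point proof carries over. As there, multiplicativity reduces everything to the generators, and the retraction off $\HFp\tatep$, whose spectral sequence is concentrated on the line $t=0$, shows that $x$ and $y^{\pm1}$ are permanent cycles. Your handling of $b$, by naturality along the map from the already-collapsed homotopy fixed point spectral sequence, is a sensible way to fill in the paper's ``same proof'', since the zero-skeleton retraction used there does not literally apply to the Tate construction. You are also right that the target should read $\Lambda(x)\otimes\F_p[y^{\pm1},b]$.
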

\begin{proof}
    The same proof as in the homotopy fixed point case (\cref{thhHFpHomotopyFixedPointGroups}) works here.
\end{proof}

\begin{proposition}
  \label{thhTateHomotopy}
  For $p$ odd, there is an isomorphism of graded rings
  \[\pi_*(\ethh(\HFp)\tatep) \we \Lambda(x) \otimes \mathbb{F}_p[y^{\pm 1},b]\] with $|y|=-2$, $|x|=-1$, and $|b|=2$.
\end{proposition}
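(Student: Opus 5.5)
The plan is to read off the ring structure from the collapsed Tate spectral sequence and then resolve multiplicative extensions. By the preceding proposition, the Tate spectral sequence for $\ethh(\HFp)\tatep \simeq \THH(H\F_p)\tatep$ collapses at $E_2$, with $E_\infty = \widehat{H}^{-*}(C_p;\F_p)\otimes \pi_*\THH(H\F_p) \cong \Lambda(x)\otimes\F_p[y^{\pm1}]\otimes \F_p[b]$. This is a spectral sequence of algebras because $\THH(H\F_p)$ is an $\mathbb{E}_\infty$-ring with trivial action. Additively, the abutment in each degree is a finite filtered $\F_p$-vector space (indeed the filtration is finite in each total degree since $b$ has positive degree and $\pi_*\THH(H\F_p)$ is bounded below), so convergence is strong. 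It remains to identify the ring structure, which requires lifting generators and checking relations.

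First, I would lift $x$ and $y$ using the unit map $H\F_p \to \THH(H\F_p)$, which is split by the augmentation $\THH(H\F_p)\to H\F_p$ as $\mathbb{E}_\infty$-rings with trivial action. Applying $(-)\tatep$ gives ring maps $H\F_p\tatep \to \THH(H\F_p)\tatep \to H\F_p\tatep$ composing to the identity. Here $\pi_*H\F_p\tatep \cong \widehat{H}^{-*}(C_p;\F_p)\cong \Lambda(x)\otimes\F_p[y^{\pm1}]$ as rings for $p$ odd; this is classical Tate cohomology, with $x^2=0$ by graded commutativity since $|x|$ is odd and $p\neq 2$. The images of $x,y$ therefore give honest classes satisfying $x^2=0$, with $y$ invertible.

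Second, I would lift $b$ via the zero-skeleton argument used in \cref{thhHFpHomotopyFixedPointGroups}. Concretely, take $b\in\pi_2\THH(H\F_p)^{hC_p}$ from that proposition and push it forward along the ring map $\THH(H\F_p)\hfixp\to\THH(H\F_p)\tatep$. Since the homotopy fixed point ring is already known to be $\Lambda(x)\otimes\F_p[y,b]$, and the canonical map is a ring map which is injective on $E_2$-pages by \cref{hfixToTatepIsInjective}, the relations among $x,y,b$ in the image (free polynomial in $b$, commuting, $x^2=0$) hold in the target. This gives a ring map $\Lambda(x)\otimes\F_p[y,b]\to\pi_*(\ethh(\HFp)\tatep)$, which inverts $y$ and so extends to $\Lambda(x)\otimes\F_p[y^{\pm1},b]$.

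Finally, I would check that this map is an isomorphism. It induces an isomorphism on associated graded for the Tate filtration, because the generators are detected by the corresponding $E_\infty$ classes and $E_\infty$ is free on them. Hence the map is an isomorphism, by the finiteness of the filtration in each degree. The main obstacle is the multiplicative extension question, for instance whether $x^2$ or a power such as $b^p$ could pick up hidden terms in higher filtration. This is handled by producing the generators through ring maps from rings already known to have the correct structure, namely $H\F_p\tatep$ and $\THH(H\F_p)\hfixp$. One subtlety to confirm is that $x^2=0$ genuinely holds in $\pi_*H\F_p\tatep$ rather than $x^2$ being a multiple of $y$; this is true for $p$ odd by graded commutativity of the $\mathbb{E}_\infty$-ring.
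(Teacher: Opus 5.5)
There is a genuine gap in your final step. You claim the Tate filtration is finite in each total degree, and this is false. Bounded-below coefficients give convergence, but finiteness would require $\pi_*\THH(H\F_p)$ to be bounded \emph{above}. In total degree $2n$ the $E_\infty$-page contains $y^{k-n}b^k$ for every $k\geq 0$, each in a different filtration; already in degree $0$ you see all the classes $(yb)^k$. This happens exactly because $b$ has positive degree and is not nilpotent. So the filtration on each $\pi_n$ is infinite (complete and Hausdorff). A filtered map from $\Lambda(x)\otimes\F_p[y^{\pm1},b]$ that is an isomorphism on associated graded is then only injective with dense image.

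The map is in fact not surjective. The $C_p$-action is trivial, and $\THH(H\F_p)\simeq\prod_{k\geq0}\Sigma^{2k}H\F_p$ as $H\F_p$-modules with increasingly connective factors. Hence $(-)\tatep$ commutes with this product, and $\pi_n(\THH(H\F_p)\tatep)\cong\prod_{k\geq0}\widehat{H}^{2k-n}(C_p;\F_p)$ has uncountable dimension. For instance, $\sum_{k}(yb)^k\in\pi_0$ is not a polynomial. If you use completeness instead of finiteness, your argument proves that $\pi_*$ is the degreewise completion $\Lambda(x)\otimes\F_p[y^{\pm1}]\otimes\F_p[[yb]]$. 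Read literally, an isomorphism with the uncompleted polynomial ring cannot hold, so no route can establish it.

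For comparison, the paper takes a different path that needs no finiteness. It forms the ring map $g\colon\pi_*(\THH(H\F_p)\hfixp)[y^{-1}]\to\pi_*(\THH(H\F_p)\tatep)$ and shows it is an isomorphism in negative degrees, where homotopy fixed points and Tate agree by the norm cofiber sequence. It then propagates this to all degrees using that $y$ is a unit. That argument is sound, but it imports $\pi_*(\THH(H\F_p)\hfixp)\cong\Lambda(x)\otimes\F_p[y,b]$ from the preceding proposition. That identification has the same completion issue: for example, $\pi_0=\prod_k H^{2k}(BC_p;\F_p)$. Given the correct completed ring as input, the paper's periodicity argument also yields the completed answer above.
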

\begin{proof}
  Since the canonical map $f$ from homotopy fixed points to Tate of $\ethh(\HFp)$ induces an injection of $E_2$-pages and the spectral sequences collapse immediately, we get an injection of $E_\infty$-pages, and so on the actual homotopy groups as well.  Moreover, the inclusion of $E_{\infty}$-pages is an isomorphism in all negative total degrees, and hence the map on homotopy groups is an isomorphism in all negative degrees. 
  
   As in the case of homotopy fixed points, since $\THH(\HFp\tatep)$ is an augmented $\HFp\tatep$ algebra, $\ethh(\HFp)\tatep$ has $\HFp\tatep$ as a retract, which in particular means that $y$ is inverted in the homotopy groups of $\ethh(\HFp)\tatep$.

  Observe that we have a commuting triangle
  \[
  \begin{tikzcd}
      \Lambda(x) \otimes \F_p[y,b]\ar[r,"f"] \ar[d,"\ell"] 
      & \pi_*(\ethh(\HFp)\tatep)
      \\
      \Lambda(x) \otimes \F_p[y^\pm,b] \ar[ur,"g"']
  \end{tikzcd}
  \]
  where $f$ is the map from the free graded-commutative algebra, $\ell$ is the localization, and $g$ is the map induced by the universal property of localization, as $f(y)$ is a unit.  Since the maps $\ell$ and $f$ are both isomorphisms in negative degrees, we see that $g$ is also an isomorphism in negative degrees.  Finally, we claim that $g$ is an isomorphism in all degrees.  Indeed, consider the commutative square
  \[
    \begin{tikzcd}
         \left(\Lambda(x) \otimes \F_p[y^\pm,b]\right)_n \ar[r,"g_n"] \ar[d,"y^k"] 
         & \pi_n(\ethh(\HFp)\tatep) \ar[d,"g(y)^k"]
         \\
        \left(\Lambda(x)\otimes \F_p[y^\pm,b]\right)_{n-2k} \ar[r,"g_{n-2k}"] 
        & \pi_{n-2k}(\ethh(\HFp)\tatep) 
    \end{tikzcd}
  \]
  which commutes because $g$ is a map of rings.  The vertical maps are isomorphisms because $y$ and $g(y)$ are units.  For any $k$ such that $n-2k<0$ the bottom horizontal map is an isomorphism, hence the top horizontal map is an isomorphism for all $n$.
\end{proof}

\subsection{Final answer}

We now compute $\ethh(\HFp)$ using the Tate square.  To clean up notation, we will write $T = \ethh(\HFp)$ throughout this section. Recall that we have a homotopy pullback of $\mathbb{E}_{\infty}$-ring spectra
\begin{equation}\label{eq: pullback square}
    \begin{tikzcd}
        T^{C_p}\ar[r,"i"] \ar[d] & T\geop \ar[d,"q"]\\
        T^{hC_p} \ar[r,"j"] & T^{tC_p}
    \end{tikzcd}
\end{equation}
which induces the usual Mayer--Vietoris long exact sequence on homotopy groups. 

For convenience, we recall that homotopy rings of the known pieces:
\begin{itemize}
    \item $\pi_*(T^{hC_p})\cong \Lambda(x) \otimes \mathbb{F}_p[y,b]$, where $|x|=-1$, $|y|=-2$, and $|b|=2$,
    \item $\pi_*(T^{tC_p})\cong \Lambda(x) \otimes \mathbb{F}_p[y^{\pm},b]$, where $|x|=-1$, $|y|=-2$, and $|b|=2$,
    \item $\pi_*(T\geop) \cong \mathbb{F}_{p}[b,c]\otimes \Lambda_{\mathbb{F}_p}[d,e]\otimes \Gamma[a]$, where $|a|=|b|=|c| =2$, $|d|=1$ and $|e|=3$.
\end{itemize}
The labels have been chosen so that elements labeled by the same letter are sent to one another. For instance, the map $j_*$ is given by the inclusion
\[
    \Lambda(x) \otimes \mathbb{F}_{p}[b,y] \to \Lambda(x) \otimes \F_p[b,y^\pm]
\]
which is evidently injective. 
In fact we show that the other horizontal map is also injective on homotopy.

\begin{lemma}\label{lemma: injection from gen to geo}
    Taking homotopy groups of \eqref{eq: pullback square}, the map $i_*\colon \pi_*(T^{C_p})\to \pi_*(T\geop)$ is always injective.
\end{lemma}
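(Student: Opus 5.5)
We prove the lemma with the Mayer--Vietoris long exact sequence of the pullback square \eqref{eq: pullback square}:
\[
\cdots \to \pi_{n+1}(T^{tC_p}) \xrightarrow{\partial} \pi_n(T^{C_p}) \xrightarrow{(i_*,\,\cdot)} \pi_n(T\geop)\oplus \pi_n(T^{hC_p}) \xrightarrow{q_*-j_*} \pi_n(T^{tC_p}) \to \cdots
\]
Suppose $z\in\pi_n(T^{C_p})$ satisfies $i_*(z)=0$. The strategy is to show that $z$ also maps to zero in $\pi_n(T^{hC_p})$. Exactness then forces $z=\partial(w)$ for some $w\in\pi_{n+1}(T^{tC_p})$, and we show that every such boundary element is detected by $i_*$, or is zero.

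First, the image $h$ of $z$ in $\pi_n(T^{hC_p})$ satisfies $j_*(h)=q_*(i_*(z))=0$ by commutativity of the square. Since $j_*$ is the injective inclusion $\Lambda(x)\otimes\F_p[b,y]\to\Lambda(x)\otimes\F_p[b,y^{\pm}]$, we get $h=0$. Hence $z$ lies in the kernel of the combined map $(i_*,\cdot)$, and exactness gives $z=\partial(w)$ for some $w\in\pi_{n+1}(T^{tC_p})$.

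The remaining step is to show $\partial(w)=0$. Equivalently, $q_*-j_*$ must be surjective onto $\pi_{n+1}(T^{tC_p})$, which is the step that needs actual input. Since $j_*$ already hits $\Lambda(x)\otimes\F_p[b,y]$, it suffices to show that the image of $q_*$ together with the image of $j_*$ spans all of $\Lambda(x)\otimes\F_p[b,y^{\pm}]$. In other words, every monomial $x^{\epsilon}b^k y^{-m}$ with $m>0$ should be congruent, modulo the image of $j_*$, to an element in the image of $q_*$.

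To show this I would use Proposition~\ref{prop:xyzw}: after smashing with $T$, the map $q$ restricts over $\HFp\geop\to\HFp\tatep$, which is a connective cover. Concretely, $q_*$ is a ring map. We know $q_*(b)=b$, and, via the retraction onto $\HFp\geop$, $q_*(d)=x^{-1}$-type and $q_*(a)$, $q_*(c)$ land on $y^{-1}$-type classes. In the notation of Corollary~\ref{cor:homotopyGeoFix} the connective-cover statement reads: the positive-degree Tate classes $y^{-m}$ and $xy^{-m}$ are hit by the images of the generators of $\pi_*\HFp\geop=\Lambda\otimes\F_p[\,\cdot\,]$. Since $q_*$ is multiplicative and $b$ is hit, all monomials $b^k\cdot(\text{nonnegative-degree Tate class})$ are hit. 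Classes of negative total degree, with $m>0$ but negative degree, cannot occur because $b^k y^{-m}$ with $m>0$ has positive degree and is covered by the previous sentence.

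The main obstacle is pinning down $q_*$ on the generators of $\pi_*T\geop$ precisely enough, in particular how the classes coming from $\HFp\geop$, realized as $d$ and the degree-$2$ class, map to $x^{\pm}$ and $y^{-1}$ in $\pi_*T^{tC_p}$. I would handle this with naturality: the unit map $\HFp\to T$ gives a map of Tate squares, and Proposition~\ref{prop:xyzw} settles the question there. With surjectivity of $q_*-j_*$ established, $\partial=0$, and $i_*$ is injective.
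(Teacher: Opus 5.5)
Your proof is correct, but it takes a different route from the paper's, and it is the more complete of the two.

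\textbf{The paper's route.} The paper compares the long exact sequences of $\mathrm{fib}(i)\simeq\mathrm{fib}(j)$. It notes that injectivity of $j_*$ makes $\partial_2$ surjective, and then asserts that $\partial_1$ is surjective. That step is not formal. Commutativity only gives $\phi\circ\partial_1=\partial_2\circ q_*$, with $\phi$ the identification of fibers. Since $\ker\partial_2=\mathrm{im}(j_*)$, the map $\partial_1$ is onto exactly when $\mathrm{im}(q_*)+\mathrm{im}(j_*)=\pi_*(T^{tC_p})$. Injectivity of $j_*$ alone does not force this. For a pullback $P=A\times_C B$ with $A=B=0$ and $C\neq 0$, the map $0=\pi_*B\to\pi_*C$ is injective, but $\pi_*P=\pi_{*+1}C\to\pi_*A=0$ is not.

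\textbf{What you do differently.} Through the Mayer--Vietoris sequence you show that, given injectivity of $j_*$, $\ker i_*$ equals the image of the boundary map. So you identify precisely the condition the paper leaves unproved: surjectivity of $q_*-j_*$. You then supply the missing input.
\begin{itemize}
\item Naturality along the unit $\HFp\to T$, together with Proposition~\ref{prop:xyzw}, shows that $\mathrm{im}(q_*)$ contains all $y^{-m}$ and $xy^{-m}$ with $m\ge 1$.
\item Since $q_*$ is a ring map and $b\in\mathrm{im}(q_*)$, $\mathrm{im}(q_*)$ contains every $x^{\epsilon}b^k y^{-m}$ with $m\ge 1$. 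The fact $b\in\mathrm{im}(q_*)$ holds because the whole square consists of $\THH(H\F_p)$-algebras; you should say so rather than just assert $q_*(b)=b$.
\item $\mathrm{im}(j_*)$ covers the monomials with nonnegative powers of $y$.
\end{itemize}
Your approach costs one extra input, the connective-cover statement, which the paper's proof never invokes. In exchange it actually closes the argument. It also produces $q_*(c)$ and $q_*(d)$ up to units directly. The paper instead derives those values afterwards from this lemma, via the degree-$1$ and degree-$2$ surjectivity, which makes its logic circular at that point.

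\textbf{Slips to fix.}
\begin{itemize}
\item $q_*(d)$ is a unit multiple of $xy^{-1}$; since $x$ is exterior there is no ``$x^{-1}$-type'' class.
\item The class $a$ does not come from $\HFp\geop$, and in fact $q_*(a)=0$ because $a$ is nilpotent. Only the image of $\pi_{\ge 0}\HFp\geop$, spanned by products of $c$ and $d$, is needed.
\item The phrase ``after smashing with $T$, the map $q$ restricts over $\HFp\geop\to\HFp\tatep$'' should be replaced by the naturality statement you give at the end: the unit $\HFp\to T$ induces a map of Tate squares.
\end{itemize}
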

\begin{proof}
    Consider the following diagram,
\[\begin{tikzcd}[ampersand replacement=\&]
	{\pi_*(\mathrm{fib}(i))} \& {\pi_*(T^{C_p})} \& {\pi_{*}(T\geop)} \& {\pi_{*-1}(\mathrm{fib}(i))} 
    \\
	{\pi_*(\mathrm{fib}(j))} \& {\pi_*(T^{hC_p})} \& {\pi_*(T^{tC_p})} \& {\pi_{*-1}(\mathrm{fib}(j))}
	\arrow[from=1-1, to=1-2]
	\arrow["\cong", from=1-1, to=2-1]
	\arrow["{i_*}", hook, from=1-2, to=1-3]
	\arrow[from=1-2, to=2-2]
	\arrow["{\partial_1}", two heads, from=1-3, to=1-4]
	\arrow["{q_*}", from=1-3, to=2-3]
	\arrow["\cong", from=1-4, to=2-4]
	\arrow["0", from=2-1, to=2-2]
	\arrow["{j_*}", hook, from=2-2, to=2-3]
	\arrow["{\partial_2}", two heads, from=2-3, to=2-4]
\end{tikzcd}\]
 where the horizontal rows are the long exact sequences associated to the fibrations and the vertical isomorphisms are due to the Adams isomorphism, as is typical of the Tate square.
 The map $j_*$ is injective, as discussed above, which implies that the map $\partial_2$ is surjective.  This, in turn, implies that $\partial_1$ is surjective and therefore $i_*$ is also injective.
\end{proof}
In particular, we can consider $\pi_*(T^{C_p})$ as a subring of $\pi_*(T\geop)$. 

\begin{corollary}\label{cor: subring}
    The subring $\pi_*(T^{C_p})\subset \pi_*(T\geop)$ is equal to the preimage 
    \[q^{-1}(\mathrm{im}(j\colon \pi_*(T^{hC_p})\to \pi_*(T^{tC_p}))).\]
\end{corollary}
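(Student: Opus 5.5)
The plan is to read this off directly from the pullback square \eqref{eq: pullback square}, via its Mayer--Vietoris sequence together with the injectivity of $i_*$ and $j_*$.

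First, the inclusion $i_*(\pi_*(T^{C_p}))\subseteq q_*^{-1}(\mathrm{im}\, j_*)$ holds because the square commutes. For $z\in\pi_*(T^{C_p})$, write $r$ for the left vertical map $T^{C_p}\to T^{hC_p}$. Then $q_*i_*(z)=j_*r_*(z)$, and this lies in the image of $j_*$.

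For the reverse inclusion, I will use the Mayer--Vietoris long exact sequence of the homotopy pullback:
\[
\cdots\to\pi_n(T^{C_p})\xrightarrow{(i_*,r_*)}\pi_n(T\geop)\oplus\pi_n(T^{hC_p})\xrightarrow{q_*-j_*}\pi_n(T^{tC_p})\to\pi_{n-1}(T^{C_p})\to\cdots.
\]
Let $w\in\pi_n(T\geop)$ satisfy $q_*(w)=j_*(v)$ for some $v\in\pi_n(T^{hC_p})$. Then $(w,v)$ lies in the kernel of $q_*-j_*$, so by exactness $(w,v)=(i_*z,r_*z)$ for some $z\in\pi_n(T^{C_p})$. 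In particular $w=i_*(z)$, which gives the equality of sets. Here I am using the convention in which the difference map $q_*-j_*$ appears in the sequence; any sign convention works, since one can replace $v$ by $-v$.

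Finally, I need the statement to be about subrings. The map $i$ is a map of $\mathbb{E}_\infty$-rings, so $i_*$ is a ring homomorphism, and by \cref{lemma: injection from gen to geo} it is injective. Its image is therefore a subring isomorphic to $\pi_*(T^{C_p})$. The preimage $q_*^{-1}(\mathrm{im}\, j_*)$ is also a subring, because $q_*$ and $j_*$ are ring maps. Since the two sets agree, they agree as subrings.

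I expect no real obstacle. The only point needing care is justifying the Mayer--Vietoris sequence for a homotopy pullback of spectra, which is standard.
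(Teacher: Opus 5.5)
Your argument is correct and is essentially the paper's. Both read the claim off the Mayer--Vietoris sequence of the Tate square together with injectivity of $i_*$; the paper phrases this as ``the boundary maps vanish, so the squares on $\pi_n$ are pullbacks of sets, and the injective horizontal maps identify the pullback with $q_*^{-1}(\mathrm{im}\, j_*)$,'' whereas your direct chase at the middle term shows that $\mathrm{im}(i_*)=q_*^{-1}(\mathrm{im}\, j_*)$ holds for any homotopy pullback square, with injectivity of $i_*$ needed only to identify $\pi_*(T^{C_p})$ with that image.
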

\begin{proof}
    The fact that $i_*$ is injective implies that the Mayer--Vietoris long exact sequence associated to \eqref{eq: pullback square} has boundary maps which are all zero.  Consequently, the squares
    \[
        \begin{tikzcd}
        \pi_*(T^{C_p}) \ar[hook,r,"i_*"] \ar[d,] & \pi_*(T\geop) \ar[d,"q_*"]\\
        \pi_*(T^{hC_p}) \ar[hook, r,"j_*"] & \pi_*(T^{tC_p})
        \end{tikzcd}
    \]
    are all pullbacks of sets.  The claim then follows from the fact that the horizontal maps are injections.
\end{proof}

In particular, if we can achieve a good understanding of how the map
\[
    q_*\colon \mathbb{F}_{p}[b,c]\otimes \Lambda_{\mathbb{F}_p}[d,e]\otimes \Gamma[a]
    \to \Lambda(x)\otimes \mathbb{F}_p[y^{\pm},b]
\]
acts on the generators $a,b,c,d,e$, we will be able to reconstruct the homotopy of $T^{C_p}$. As a first step, observe that the naturality of the map from geometric fixed points to the Tate construction gives a commutative diagram of commutative $H\F_p$-algebras,
    \begin{equation}\label{eq: blah}
    \begin{tikzcd}
    \HFp\geop \ar[r] \ar[d] &
    T\geop \ar[d,"q"]
    \\
    \HFp^{t C_p} \ar[r,"\ell"] &
    T^{t C_p}.
    \end{tikzcd}
    \end{equation}
  Here, the horizontal maps are induced by the unit maps making $\THH(R)$ an (augmented) $R$-algebra.

\begin{lemma}
  \label{lemma:imageOfEll}
  Taking homotopy groups of \eqref{eq: pullback square}, the map $q_*$ takes the elements $c$ and $d$ to the image of $\ell_*$.
\end{lemma}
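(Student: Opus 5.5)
The plan is to show that $c$ and $d$ already lie in the image of the top horizontal map of \eqref{eq: blah}, that is, of the unit $\eta\colon \HFp\geop \to T\geop$. Once that is known, commutativity of \eqref{eq: blah} gives $q_*(c), q_*(d)\in q_*(\mathrm{im}\,\eta_*) = \ell_*(\pi_*(\HFp^{tC_p}))$, which is the claim. So the content of the lemma is to identify $\eta_*$ on homotopy in terms of the generators of \cref{cor: homotopy of geo of ETHH hfp}.

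First, by \cref{cor-geomfp-ETHH} the map $\eta$ is identified with the unit $R \to \THH(R)$ for $R = \HFp\geop$. This unit is natural in maps of $\mathbb{E}_1$-algebras, so it is compatible with the $\mathbb{E}_2$ (hence $\mathbb{E}_1$) equivalence $R \simeq H\F_p\wedge S^1_+\wedge(\Omega S^3)_+$ of \cref{E2AlgebraGeoFix}. Under that equivalence $x\in\pi_1 R$ corresponds to the fundamental class of $S^1$, and $y\in\pi_2R$ to the generator $b$ of $H_2(\Omega S^3;\F_p)$.

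Next, I will write $R \simeq H\F_p\wedge_{H\F_p}(H\F_p\wedge \Sigma^\infty_+ M)$ with $M = S^1\times\Omega S^3 = \Omega(BS^1\times S^3)$. For a loop space $M=\Omega X$ there is the standard equivalence $\THH(\Sigma^\infty_+\Omega X)\simeq \Sigma^\infty_+ LX$, and under it the unit $\Sigma^\infty_+\Omega X\to\Sigma^\infty_+LX$ is induced by the fiber inclusion $\Omega X\hookrightarrow LX$ of based loops. Together with the monoidality of $\THH$, this identifies $\eta$ with
\[
H\F_p\wedge(\Omega S^3\times S^1)_+ \longrightarrow \THH(H\F_p)\wedge (LBS^1)_+\wedge(LS^3)_+ ,
\]
given by the unit of $\THH(H\F_p)$ smashed with the fiber inclusions $S^1=\Omega BS^1\to LBS^1\simeq S^1\times\C P^\infty$ and $\Omega S^3\to LS^3$. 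On $\F_p$-homology, the first of these sends the degree-one class to $d$. The second sends the polynomial generator of $H_*(\Omega S^3)$ to $c$, since in the Serre spectral sequence used in the computation of $H_*(LS^3;\F_p)$ the class $c$ is precisely the fiber class. Hence $\eta_*(x)=d$ and $\eta_*(y)=c$, possibly up to units, which is harmless.

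The step I expect to be most delicate is this bookkeeping of the labels. I must make sure that the letters $c$ and $d$ in \cref{prop-THH-geomfp} and \cref{cor: homotopy of geo of ETHH hfp} are the fiber classes coming from $R$, and not the classes $b$ coming from $\THH(H\F_p)$. I also must check that passing from $LBS^1$ to $S^1\times\C P^\infty$ does not mix $d$ with other classes. If it turns out that the chosen generators differ from these fiber classes by decomposables, I would redefine $c$ and $d$ to be $\eta_*(y)$ and $\eta_*(x)$. This is legitimate because the paper only asks that same-named elements correspond, and the ring description is unchanged by such a change of generators.
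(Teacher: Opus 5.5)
Your proposal is correct and follows essentially the same route as the paper. Both arguments observe that $c$ and $d$ lie in the image of the unit map $\HFp\geop\to T\geop$, so commutativity of \eqref{eq: blah} forces $q_*(c)$ and $q_*(d)$ into $\mathrm{im}(\ell_*)$; the paper takes the identification of this unit on homotopy as part of its labeling convention, and your description of the unit as the fiber inclusion $\Omega X\to LX$ under $\THH(\Sigma^\infty_+\Omega X)\simeq\Sigma^\infty_+LX$ supplies the justification for that labeling.
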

\begin{proof}
  Taking homotopy groups of \eqref{eq: blah} gives a diagram
  \[ \begin{tikzcd}
    \F_p[c]\otimes\Lambda(d) \ar[r,hook] \ar[d] &
    \F_p[b,c] \otimes \Lambda(d,e) \otimes \Gamma[a] \ar[d,"q_*"]
    \\
    \Lambda(x) \otimes \F_p[y^{\pm}] \ar[r,hook,"\ell_*"] &
    \Lambda(x) \otimes \F_p[y^\pm,b].
  \end{tikzcd} \]
  The horizontal maps in \eqref{eq: blah} are inclusions of retracts, hence the horizontal maps here are injections and the result follows.
\end{proof}
 In the following lemma, we write $\gamma_n(a)$ for the $n$-th divided power of $a$.

\begin{proposition}\label{prop: most of q}
    The map $q_*$ acts as follows:
    \begin{align*}
        q_*(\gamma_n(a)) & =0 \\
        q_*(b) &= b \\
        q_*(c) &= y^{-1} \\
        q_*(d) &= xy^{-1} \\
        q_*(e) &= 0
    \end{align*}
    where each $z_i$ is in the image of $j_i$.
\end{proposition}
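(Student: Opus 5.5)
The plan is to compute $q_*$ one generator at a time, using three facts: $q$ is a map of commutative $H\F_p$-algebras, so $q_*$ is a map of graded rings; $q_*$ is compatible with the diagram \eqref{eq: blah}; and $q_*$ is compatible with the augmentation $T\to\HFp$ coming from $\THH(R)\to R$ for commutative $R$. The target $\Lambda(x)\otimes\F_p[y^{\pm},b]$ is nearly a domain. Its only nilpotent elements are the multiples of $x$, and every odd-degree element is such a multiple. These observations should dispose of most generators by degree and nilpotence arguments.

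\emph{The classes $c$ and $d$.} By \cref{lemma:imageOfEll}, $q_*(c)$ and $q_*(d)$ lie in $\ell_*(\pi_*\HFp\tatep)=\Lambda(x)\otimes\F_p[y^{\pm}]$, and they are the images of the generators of $\pi_*\HFp\geop$. By \cref{prop:xyzw}, $\HFp\geop\to\HFp\tatep$ is a connective cover. It is therefore an isomorphism on $\pi_1$ and $\pi_2$, where the targets are $\F_p\{xy^{-1}\}$ and $\F_p\{y^{-1}\}$. After rescaling $c$ and $d$ by units, which is allowed since the labels were only fixed up to such choices, we get $q_*(c)=y^{-1}$ and $q_*(d)=xy^{-1}$.

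\emph{The divided powers of $a$.} In characteristic $p$, every positive-degree element $z$ of $\Gamma[a]$ satisfies $z^p=0$, so $q_*(\gamma_n(a))$ is nilpotent. Thus $q_*(\gamma_n(a))=x\cdot w$ with $|w|=2n+1$ odd. But $w$ is itself a multiple of $x$, and $x^2=0$, so $q_*(\gamma_n(a))=0$.

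\emph{The class $e$.} The same nilpotence argument only gives $q_*(e)=x\cdot g$ with $|g|=4$. To kill this term I would use the augmentation: $e$ comes from the $LS^3$ factor, which is killed by $\THH(\HFp\geop)\to\HFp\geop$. The compatible square into $\HFp\tatep$ then forces $q_*(e)\in(b)$, giving $q_*(e)=x\sum_k \lambda_k b^k y^{k-2}$ with $k\geq 1$. To finish, I would compare with $H\underline{\Z}$ via \cref{cor-geomfp-ETHH} and \cref{remark: induced map on geofix}. There, $\THH(H\underline{\Z}\geop)$ has no class $x$, and $e$ should arise from a class that maps to something without an $x$-component, forcing the $\lambda_k$ to vanish.

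\emph{The class $b$, which I expect to be the main obstacle.} The degree-$2$ part of the target is spanned by $y^{-1}$ and the infinitely many classes $b^k y^{k-1}$, so degree counting alone does not determine $q_*(b)$. My first attempt is to produce a lift of $b$ to $\pi_2 T^{C_p}$ that restricts to $b\in\pi_2T\hfixp$. One source is the inflation of the $\Omega S^3$-factor $\THH(H\F_p)\simeq H\F_p\wedge(\Omega S^3)_+$, which carries trivial action, along the map $H\F_p\to\HFp^{C_p}$. The pullback square \eqref{eq: pullback square} would then give $q_*(b)=j_*(b)=b$. Alternatively, I would use the augmentation together with functoriality in the equivariant $\mathbb{E}_2$-map $H\F_p\wedge(\Omega S^3)_+\to\THH$ to rule out the $y^{-1}$ and $b^ky^{k-1}$ components with $k\geq 2$. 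Either way, the point to check carefully is that the class called $b$ in $\pi_*T\geop$ in \cref{prop-THH-geomfp} is the image of this lifted class and not merely some class in the same degree.
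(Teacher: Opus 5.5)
The gap is in $q_*(e)=0$; the rest of your proposal is sound. Your handling of $\gamma_n(a)$ and $b$ is the paper's. The paper just says the Tate square is a diagram of $\THH(H\F_p)$-algebras, and your inflation argument makes that explicit. Your route for $c$ and $d$ is a valid, slightly cleaner alternative. Pushing the generators of $\pi_*\HFp\geop$ around \eqref{eq: blah} and using that $\HFp^{\Phi}\to\HFp^{t}$ is a connective cover (\cref{prop:xyzw}) shows directly that $q_*(c)$ and $q_*(d)$ are nonzero multiples of $y^{-1}$ and $xy^{-1}$. You avoid the paper's Mayer--Vietoris cokernel argument and its care about rescaling $x$ rather than $y$.

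For $e$, your augmentation step only yields $q_*(e)\in x\cdot(b)$. That leaves every $xb^ky^{k-2}$ with $k\ge 1$ possible, and nothing else you have constrains these. For example, exactness of Mayer--Vietoris in degree $3$ already holds via $q_*(cd)=xy^{-2}$, $q_*(bd)=xby^{-1}$ and $\mathrm{im}(j_*)$, whatever $q_*(e)$ is. Incidentally, the $LS^3$ factor is not killed by the augmentation. The unit sends the degree-$2$ generator of $\pi_*\HFp\geop$ to $c$, so the augmentation sends $c$ back to it. The class $e$ dies only because it lands in $H_3(\Omega S^3;\F_p)=0$.

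Your closing sentence is not an argument. Every odd-degree class of $\pi_*T\tatep$ is a multiple of $x$, so ``maps to something without an $x$-component'' just restates $q_*(e)=0$. Also, ``$\THH(H\underline{\Z}\geop)$ has no class $x$'' is beside the point. The ring $\pi_*S\geop\cong\F_p[b,c]\otimes\Lambda[e]$ does have an odd class, namely the would-be lift of $e$. The odd classes that matter live in $\pi_*S\tatep$, which is nonzero in degree $3$ because $\THH(\Z)$ has $p$-torsion in odd degrees.

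What you need are the two inputs the paper proves.

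First, $e$ lies in the image of $f_*\colon\pi_3S\geop\to\pi_3T\geop$, say $e=f_*(e')$. This uses the $\mathbb{E}_2$-equivalence $H\underline{\Z}\geop\simeq H\F_p\wedge\Omega S^3_+$ coming from intrinsic formality (\cref{remark: geo HZ as E2}). That gives $S\geop\simeq H\F_p\wedge\Omega S^3_+\wedge LS^3_+$ (\cref{lemma: geo of S}). One then observes that $S\geop\to T\geop$ is the unit of the extra factor $\C P^\infty_+\wedge S^1_+$ (\cref{StoTGeoFixMap}).

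Second, $S\tatep\to T\tatep$ is zero on $\pi_3$ (\cref{lem-StoTTatepDegree3}). The paper gets this by comparing the Tate spectral sequences for $\THH(\Z)\to\THH(\F_p)$ with trivial action. The map of $E_2$-pages vanishes off the line coming from $\pi_0$, because $\pi_*\THH(\Z)$ is zero in positive even degrees and $\pi_*\THH(\F_p)$ is zero in odd degrees. On the $\pi_0$ line, $\widehat{H}^{\mathrm{odd}}(C_p;\Z)=0$.

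Naturality of the map from geometric fixed points to the Tate construction then gives $q_*(e)=q_*f_*(e')=f_*(q_*(e'))=0$ (\cref{cor-eGoesTo0}). With these inputs your augmentation step becomes unnecessary.
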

\begin{proof}
    We take each relation in turn and use $\F_p\{\alpha\}$ to denote the $\F_p$-module generated by $\alpha$.
    \begin{itemize}
        \item The mod $p$ divided power relations tell us that $\gamma_n(a)^p=0$ for all $n$.  Therefore, $q_*(\gamma_n(a))$ is an even degree nilpotent element in $\Lambda(x)\otimes \mathbb{F}_p[y^{\pm},b]$, which must be zero.
        
        \item The diagram \eqref{eq: pullback square} is a diagram of $\THH(H\F_p)$-algebras, so on homotopy groups all maps take $b$ to $b$.
        
        \item  The short exact sequence associated to \eqref{eq: pullback square} in degree 2 implies that the map
        \[
            \mathbb{F}_p\{b,c,a \}\oplus \mathbb{F}_p\{y^nb^{n+1}\}_{n\geq 0}
            \xrightarrow{q_2\oplus j_2} 
            \mathbb{F}_p\{y^nb^{n+1}\}_{n\geq -1}
        \]
        is surjective. Observe that because $q_*(b)$ and $q_*(a)$ are in the image of $j_2$, we have that $q_*(c)$ generates the cokernel of the image of $j_2$; in particular it is not zero. Now, by \cref{lemma:imageOfEll} it must be that $q_*(c)=ky^{-1}$ for some unit $k\in \F_p$. Replacing $y$ with $k^{-1}y$ if necessary, we have $q_*(c)=y^{-1}$.

        \item Similarly, the short exact sequence associated to \eqref{eq: pullback square} in degree 1 implies that the map
        \[
            \mathbb{F}_p\{d \}\oplus \mathbb{F}_p\{xy^nb^{n+1}\}_{n\geq 0}
            \xrightarrow{q_1\oplus j_1} 
            \mathbb{F}_p\{xy^nb^{n+1}\}_{n\geq -1}
        \]
        is surjective. It follows that $q_*(d)$ generates the cokernel of $j_1$, and hence is non-zero.  Now, by \cref{lemma:imageOfEll} it must be that $q_*(d)=k_2xy^{-1}$ for some unit $k_2\in \F_p$. Replacing $x$ with $k_2^{-1}x$ if necessary, we have $q_*(d)=xy^{-1}$ as desired. Note that it is important that we have modified $x$ here, and not $y$ as we might have modified $y$ to get the previous relation.
    
        \item We defer the computation $q_*(e)=0$ to \cref{cor-eGoesTo0} below.\qedhere
    \end{itemize}
    
\end{proof}
\begin{remark}
    The previous discussion implies that the map $q_1\oplus j_1$ in the long exact Mayer-Vietoris sequence is an isomorphism, so $\pi_1(T^{C_p}) = 0$.
\end{remark}

All that remains is to compute the element $q_*(e)$.  For this, we use a comparison with $S=\ethh(H\underline{\mathbb{Z}})$. Note that the unique ring map $H\underline{\mathbb{Z}}\to \HFp$ induces a map of commutative ring $C_p$-spectra $f\colon S\to T$.  We begin by computing the homotopy groups of the geometric fixed points $S\geop$.

\begin{lemma}\label{lemma: geo of S}
    There is an equivalence of $\mathbb{E}_1$-$H\mathbb{F}_p$-algebras
    \[
        S\geop\simeq H\mathbb{F}_p\wedge \Omega S^3_+\wedge LS^3_+.
    \]
\end{lemma}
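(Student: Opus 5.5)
We follow the proof of \cref{prop-THH-geomfp}, using the $\mathbb{Z}$-versions of the inputs collected above. By \cref{cor-geomfp-ETHH} there is an equivalence of commutative $H\F_p$-algebras
\[
    S\geop=\ethh(H\underline{\mathbb{Z}})\geop\simeq \THH(H\underline{\mathbb{Z}}\geop).
\]
Note that $\THH$ here means $\THH$ relative to the sphere. Since $H\underline{\mathbb{Z}}\geop$ is a commutative $H\F_p$-algebra (\cref{rem:CDGAmodels}), it is natural to compare with $\THH$ relative to $H\F_p$.

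First, by \cref{remark: geo HZ as E2} there is an equivalence of $\mathbb{E}_2$-$H\F_p$-algebras $H\underline{\mathbb{Z}}\geop\simeq H\F_p\wedge(\Omega S^3)_+$, coming from intrinsic formality (\cref{example: formal DGA}). Since $\THH$ only needs an $\mathbb{E}_1$-structure as input and produces an $\mathbb{E}_1$-algebra when fed an $\mathbb{E}_2$-algebra, we obtain an equivalence of $\mathbb{E}_1$-algebras
\[
    S\geop\simeq \THH\bigl(H\F_p\wedge(\Omega S^3)_+\bigr).
\]
Next we use that $\THH$ is symmetric monoidal, so that
\[
    \THH\bigl(H\F_p\wedge \Sigma^\infty_+\Omega S^3\bigr)\simeq \THH(H\F_p)\wedge \THH(\Sigma^\infty_+\Omega S^3).
\]
This is an equivalence of $\mathbb{E}_1$-rings, because both tensor factors are $\mathbb{E}_2$. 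For the second factor, the formula for $\THH$ of a suspension spectrum of a grouplike $\mathbb{E}_1$-space gives $\THH(\Sigma^\infty_+\Omega S^3)\simeq\Sigma^\infty_+ L B\Omega S^3\simeq \Sigma^\infty_+LS^3$. This is the same step used for the $(\Omega S^3)_+$ factor in \cref{prop-THH-geomfp}. For the first factor, B\"okstedt periodicity in the form of \cite[Theorem 1.3]{BT22} gives $\THH(H\F_p)\simeq H\F_p\wedge(\Omega S^3)_+$ as $\mathbb{E}_2$-$H\F_p$-algebras. Combining the two factors gives
\[
    S\geop\simeq H\F_p\wedge(\Omega S^3)_+\wedge (LS^3)_+
\]
as $\mathbb{E}_1$-$H\F_p$-algebras.

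The main point requiring care is multiplicative bookkeeping. The equivalence of \cref{remark: geo HZ as E2} is only $\mathbb{E}_2$, so we can only conclude an $\mathbb{E}_1$-equivalence after applying $\THH$. This is exactly the level of structure the lemma claims. We must also check that the $H\F_p$-algebra structure on $\THH$ (absolute) of an $H\F_p$-algebra is compatible with the monoidal splitting, that is, that we are taking $\THH$ over $\mathbb{S}$ throughout, as in \cref{prop-THH-geomfp}. One expects this to go through verbatim, since the proof is the same as for $\HFp$ with the $S^1_+$ factor (and hence the $LBS^1\simeq S^1\times\C P^\infty$ factor) deleted. This matches \cref{remark: induced map on geofix}, which says the $\mathbb{Z}$-case lacks the exterior class $x$. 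As a consistency check, the homotopy should come out as $\F_p[b,c]\otimes\Lambda[e]$, and the map $f\colon S\to T$ should be the evident inclusion.
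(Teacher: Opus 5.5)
Your proposal is correct and follows the paper's proof: identify $S^{\Phi C_p}\simeq\THH(H\underline{\Z}^{\Phi C_p})$ via \cref{cor-geomfp-ETHH}, replace the input by $H\F_p\wedge(\Omega S^3)_+$ using the $\mathbb{E}_2$-equivalence of \cref{remark: geo HZ as E2}, and then split $\THH$ using $\THH$ of suspension spectra together with $\THH(H\F_p)\simeq H\F_p\wedge(\Omega S^3)_+$. The only difference is that you spell out the B\"okstedt-periodicity factor and the $\mathbb{E}_2\Rightarrow\mathbb{E}_1$ bookkeeping, which the paper's one-line argument leaves implicit (and you cite the right corollary where the paper points to \cref{prop-THH-geomfp}).
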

\begin{proof}
    By \cref{prop-THH-geomfp}, there is an equivalence of commutative $H\mathbb{F}_p$-algebras $S\geop\simeq \THH(H\underline{\mathbb{Z}}\geop)$. By \cref{remark: geo HZ as E2}, we have an equivalence of $\mathbb{E}_2$-$H\mathbb{F}_p$-algebras $H\underline{\mathbb{Z}}\geop\simeq H\mathbb{F}_p\wedge \Omega S^3_+$, and the claim now follows from the computation of $\THH$ of suspension spectra.
\end{proof}

\begin{corollary}
\label{StoTGeoFixMap}
        The class $e\in \pi_3 T\geop$ is in the image of the map \[f_*\colon S_*\geop\to T_*\geop.\]
\end{corollary}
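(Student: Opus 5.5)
The plan is to identify the map $f\geop\colon S\geop\to T\geop$ concretely as $\THH$ of an explicit map of $\mathbb{E}_1$-$H\F_p$-algebras. Then $f_*$ on homotopy can be read off from the functoriality of $\THH$ of suspension spectra, i.e.\ from a map of free loop spaces. Since geometric fixed points are symmetric monoidal and commute with the cyclic bar construction (\cref{cor-geomfp-ETHH}), $f\geop$ is identified with $\THH(H\underline{\mathbb{Z}}\geop)\to\THH(\HFp\geop)$, the map induced by $H\underline{\mathbb{Z}}\geop\to\HFp\geop$. So it suffices to understand this last map as an $\mathbb{E}_1$-map under the models of \cref{remark: geo HZ as E2} and \cref{E2AlgebraGeoFix}.

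First key step: show that under the equivalences $H\underline{\mathbb{Z}}\geop\simeq H\F_p\wedge(\Omega S^3)_+$ and $\HFp\geop\simeq H\F_p\wedge S^1_+\wedge(\Omega S^3)_+$, the map $H\underline{\mathbb{Z}}\geop\to\HFp\geop$ is homotopic, as an $\mathbb{E}_1$-$H\F_p$-algebra map, to the map induced by the inclusion of $\mathbb{E}_1$-monoids $\Omega S^3\to S^1\times\Omega S^3$. The tool is freeness. Since $\Omega S^3=\Omega\Sigma S^2$ is the free $\mathbb{E}_1$-space on $S^2$, the algebra $H\F_p\wedge(\Omega S^3)_+$ is the free $\mathbb{E}_1$-$H\F_p$-algebra on a degree $2$ class. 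Hence $\mathbb{E}_1$-maps out of it into any $\mathbb{E}_1$-$H\F_p$-algebra $E$ are classified, up to homotopy, by elements of $\pi_2E$. By \cref{remark: induced map on geofix}, the map $H\underline{\mathbb{Z}}\geop\to\HFp\geop$ sends $y\mapsto y$ on $\pi_2$. The inclusion of the $\Omega S^3$ factor also sends the generator to $y$, by construction of the equivalence in \cref{E2AlgebraGeoFix}. So the two $\mathbb{E}_1$-maps agree up to homotopy. I expect this to be the main obstacle: one must check that only the $\mathbb{E}_1$-structure is needed (which suffices since $\THH$ only uses $\mathbb{E}_1$-structure) and that the chosen identifications really send generators to $y$.

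Second step: apply $\THH$. For an $\mathbb{E}_1$-monoid $M$ (grouplike), writing $H\F_p\wedge M_+$ as a base change, we get $\THH(H\F_p\wedge M_+)\simeq\THH(H\F_p)\wedge L(BM)_+$, naturally in $M$. For $M=\Omega S^3\to M'=S^1\times\Omega S^3$ this gives the map
\[
  H\F_p\wedge(\Omega S^3)_+\wedge (LS^3)_+\longrightarrow H\F_p\wedge(\Omega S^3)_+\wedge (LBS^1)_+\wedge(LS^3)_+ .
\]
This map is the identity on the $\THH(H\F_p)$ factor and on $LS^3$, and is induced by the basepoint inclusion into $LBS^1\simeq S^1\times\C P^\infty$. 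This is exactly the identification used in \cref{lemma: geo of S} and \cref{prop-THH-geomfp}.

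Finally, pass to homology. By the Künneth isomorphism, the class $e\in H_3(LS^3;\F_p)\subset\pi_3T\geop$ is $1\otimes1\otimes e$. It is therefore the image of the corresponding class $1\otimes e\in H_*(\Omega S^3)\otimes H_*(LS^3)\cong\pi_*S\geop$, so $e\in\mathrm{im}(f_*)$.
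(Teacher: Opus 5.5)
Your proposal is correct and follows the same route as the paper: you identify $f^{\Phi C_p}$ with $\THH$ of the map $H\underline{\mathbb{Z}}\geop\to\HFp\geop$, recognize that map as induced by $\Omega S^3\to S^1\times\Omega S^3$, and conclude that after $\THH$ it is the identity on $(\Omega S^3)_+\wedge(LS^3)_+$ smashed with the unit into $(LBS^1)_+\simeq S^1_+\wedge\C P^\infty_+$, so that $e$ is hit. Your freeness argument ($H\F_p\wedge(\Omega S^3)_+$ is the free $\mathbb{E}_1$-$H\F_p$-algebra on a degree-$2$ class, so an $\mathbb{E}_1$-$H\F_p$-map out of it is determined by where that class goes in $\pi_2$) makes explicit a step the paper leaves implicit, namely how the identification of the map follows from its effect on homotopy.
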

\begin{proof}
    By \cref{lemma: geo of S,} have equivalences of $\mathbb{E}_1$-$H\mathbb{F}_p$-algebras
    \begin{align*}
        S\geop & \simeq H\mathbb{F}_p\wedge \Omega S^3_+\wedge LS^3_+\\
        T\geop & \simeq H\mathbb{F}_p\wedge \Omega S^3_+\wedge LS^3_+\wedge \mathbb{C}P^{\infty}_+\wedge S^1_+\
    \end{align*}
    and the map $S\geop\to T\geop$ is equivalent to smashing $H\mathbb{F}_p\wedge \Omega S^3_+\wedge LS^3_+$ with the unit map
    \[
        \mathbb{S}\xrightarrow{\eta} \mathbb{C}P^{\infty}_+\wedge S^1_+.
    \]
    This follows from the fact that the map $H\underline{\mathbb{Z}}\geop\to \HFp\geop$ induces the inclusion of the sub-polynomial algebra on the generator in degree $2$. Since the class $e\in \pi_3 T\geop$ comes from the homology of $LS^3_+$, the claim follows.
\end{proof}

\begin{lemma}
\label{lem-StoTTatepDegree3}
    The map $S^{tC_p}\to T^{tC_p}$ induces the zero map on homotopy groups in all odd degrees.
\end{lemma}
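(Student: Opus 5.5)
The Tate constructions only depend on underlying spectra with $C_p$-action, so I would reduce to the map $\THH(H\mathbb{Z})^{tC_p}\to \THH(H\F_p)^{tC_p}$, both with trivial action. As with $T$, restricting $S$ to the underlying level gives $\ethh(H\underline{\mathbb{Z}})^e\simeq \THH(H\mathbb{Z})$ with trivial $C_p$-action, and $S^{tC_p}\simeq \THH(H\mathbb{Z})^{tC_p}$. The map $S^{tC_p}\to T^{tC_p}$ is then $(-)^{tC_p}$ applied to $\THH(H\mathbb{Z})\to \THH(H\F_p)$, a map of spectra with trivial action. Naturality of the Tate spectral sequence gives a map of spectral sequences with $E_2$-pages
\[
\widehat{H}^{-s}(C_p;\pi_t\THH(H\mathbb{Z}))\longrightarrow \widehat{H}^{-s}(C_p;\pi_t\THH(H\F_p)) .
\]
By \cref{thhTateHomotopy}, the target has $E_2=E_\infty=\Lambda(x)\otimes\F_p[y^{\pm1},b]$.

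Next I would identify the source $E_2$-page and compute the map on it. Recall B\"okstedt's computation: $\pi_0\THH(H\mathbb{Z})=\mathbb{Z}$, $\pi_{2k-1}\THH(H\mathbb{Z})=\mathbb{Z}/k$ for $k\geq 2$, and all other groups vanish. The map to $\pi_*\THH(H\F_p)=\F_p[b]$ is nonzero only in degree $0$, where it is reduction mod $p$; in positive degrees it vanishes because the source is concentrated in odd degrees and the target in even degrees. With trivial action, Tate cohomology is $\widehat{H}^{n}(C_p;\mathbb{Z})=\F_p$ for $n$ even and $0$ for $n$ odd. For a finite group $M$, $\widehat{H}^n(C_p;M)=M/p$ or $M[p]$, which is $2$-periodic.

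The map on $E_2$-pages therefore has two parts:
\begin{itemize}
\item In the row $t=0$, it is $\widehat{H}^{*}(C_p;\mathbb{Z})\to \widehat{H}^*(C_p;\F_p)$. This is the inclusion $\F_p[y^{\pm1}]\to\Lambda(x)\otimes\F_p[y^{\pm1}]$, the Tate analogue of \cref{remark: induced map on geofix}. It hits only even total degrees.
\item In all rows $t>0$, it is zero.
\end{itemize}

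Then I would pass from $E_\infty$ to homotopy groups. The map of spectral sequences induces a map of filtered abelian groups on $\pi_*$, and the induced map on $E_\infty$ factors through the map on $E_2$-page subquotients. In odd total degree $n$, the $t=0$ contribution vanishes because $\widehat{H}^{\mathrm{odd}}(C_p;\mathbb{Z})=0$. The remaining contributions, from $t>0$, map to zero on $E_\infty$. Hence the map on $\pi_n$ raises Tate filtration strictly. Since the target's filtration is complete and Hausdorff, and the Tate spectral sequence converges conditionally, one expects this to force the map to be zero, but only after an argument.

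\textbf{Main obstacle.} A map that is zero on associated gradeds need not be zero. I would handle this in one of two ways.
\begin{enumerate}
\item Use that the target has no odd-degree classes in filtration $t>0$ that are also multiples of $x$ in the right way, and exploit that the map is $\pi_*\THH(H\mathbb{Z})^{tC_p}$-linear into a ring whose odd part is $x\cdot\F_p[y^{\pm1},b]$.
\item Factor the map more cleanly. The map $\THH(H\mathbb{Z})\to\THH(H\F_p)$ factors as $\THH(H\mathbb{Z})\to \tau_{\le 0}\THH(H\mathbb{Z})=H\mathbb{Z}\to H\F_p\to\THH(H\F_p)$ up to a correction; more robustly, restricted to the fiber $\tau_{\geq1}\THH(H\mathbb{Z})$, the map factors through $\tau_{\geq 1}\THH(H\F_p)=\tau_{\ge2}$.
\end{enumerate}
I would try the second approach first. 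Split $\THH(H\mathbb{Z})$ via the cofiber sequence $\tau_{\geq 1}\to \THH(H\mathbb{Z})\to H\mathbb{Z}$ and apply $(-)^{tC_p}$. The $H\mathbb{Z}^{tC_p}$ part has homotopy only in even degrees. For the $\tau_{\ge1}$ part, the composite into $\THH(H\F_p)$ lands in $\tau_{\geq2}\THH(H\F_p)\simeq \Sigma^2 H\F_p\wedge(\Omega S^3)_+$-type pieces, which are $H\F_p$-modules. The map from a bounded-below spectrum with odd homotopy to an $H\F_p$-module with even homotopy is $H\mathbb{Z}$-linear, and hence null. Then I would check that the resulting long exact sequence leaves no odd-degree room for a nonzero map, using evenness of $\pi_*H\mathbb{Z}^{tC_p}$ and injectivity facts already established.
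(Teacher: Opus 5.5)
\textbf{There is a genuine gap in your approach (2).} Your reduction to $\THH(H\mathbb{Z})^{tC_p}\to\THH(H\F_p)^{tC_p}$ with trivial actions, and your description of the map on $E_2$-pages, are exactly the paper's proof. The paper concludes at that point. You are right that this does not yet suffice: vanishing on $E_\infty$ in odd total degree only shows that the map raises Tate filtration. Your approach (1) does not constrain this, since the target has odd classes $xy^jb^k$ in arbitrarily high filtration.

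The problem is that approach (2) rests on a false claim. An $H\mathbb{Z}$-linear map from a module with odd homotopy to an $H\F_p$-module with even homotopy need not be null. For $p\mid k$ we have $[\Sigma^{2k-1}H\mathbb{Z}/k,\Sigma^{2k}H\F_p]_{H\mathbb{Z}}\cong\mathrm{Ext}^1_{\mathbb{Z}}(\mathbb{Z}/k,\F_p)\cong\F_p$. This group is generated by the Bockstein $\Sigma^{2k-1}H\mathbb{Z}/k\to\Sigma^{2k}H\mathbb{Z}$ followed by reduction.

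Such a component really occurs here. The group $H_{2p}(\THH(H\mathbb{Z});\F_p)$ is spanned by $\sigma\bar\tau_1$, which comes from the summand $\Sigma^{2p-1}H\mathbb{Z}/p$ of $\tau_{\geq 1}\THH(H\mathbb{Z})$. By B\"okstedt's relation it maps to $\sigma\bar\tau_1=(\sigma\bar\tau_0)^p\neq 0$ in $H_*(\THH(H\F_p);\F_p)$. Consequently:
\begin{itemize}
\item the restriction of the map to $\tau_{\geq1}\THH(H\mathbb{Z})$ is not null;
\item the ``correction'' in your factorization through $H\mathbb{Z}\to H\F_p$ is nonzero;
\item your final long exact sequence step has nothing to work with.
\end{itemize}

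This component also shows your worry is not merely formal. After $(-)^{tC_p}$ it becomes $\Sigma^{2p-1}$ of the Bockstein on $\widehat H^*(C_p;\F_p)$, which sends $xy^j\mapsto y^{j+1}$. So in even degrees the map sends classes from row $2p-1$ to nonzero classes in row $2p$, and the map on $E_\infty$ does not see this.

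\textbf{The fix.} The missing idea is to use this $\mathrm{Ext}$ analysis positively.
\begin{enumerate}
\item The map is $H\mathbb{Z}$-linear, since it is a map of $\THH(H\mathbb{Z})$-modules.
\item As $H\mathbb{Z}$-modules, $\THH(H\mathbb{Z})\simeq H\mathbb{Z}\oplus\bigoplus_{k\geq 2}\Sigma^{2k-1}H\mathbb{Z}/k$ and $\THH(H\F_p)\simeq\bigoplus_j\Sigma^{2j}H\F_p\simeq(\bigoplus_j\Sigma^{2j}H\mathbb{Z})/p$.
\item The obstruction to lifting the map through the reduction $\bigoplus_j\Sigma^{2j}H\mathbb{Z}\to\bigoplus_j\Sigma^{2j}H\F_p$ lies in $[\THH(H\mathbb{Z}),\bigoplus_j\Sigma^{2j+1}H\mathbb{Z}]_{H\mathbb{Z}}$. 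Its components are $\mathrm{Ext}^{2j+1}_{\mathbb{Z}}(\mathbb{Z},\mathbb{Z})$ and $\mathrm{Ext}^{2j-2k+2}_{\mathbb{Z}}(\mathbb{Z}/k,\mathbb{Z})$. These all vanish: the first because $\mathbb{Z}$ is projective, the second by parity and because $\mathrm{Hom}(\mathbb{Z}/k,\mathbb{Z})=0$. So the lift exists.
\item Pull this factorization back to trivial $C_p$-action and apply $(-)^{tC_p}$. The map $S^{tC_p}\to T^{tC_p}$ then factors through $(\bigoplus_j\Sigma^{2j}H\mathbb{Z})^{tC_p}\simeq\prod_j\Sigma^{2j}H\mathbb{Z}^{tC_p}$. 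This identification holds because the sum is bounded below with connectivity tending to infinity.
\item The $\pi_N$ of this product is $\prod_j\widehat H^{2j-N}(C_p;\mathbb{Z})$, which is zero for $N$ odd.
\end{enumerate}
This proves the lemma and also fills the gap in the spectral-sequence argument.
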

\begin{proof}
    Since Tate spectra depend only on underlying spectra, this is equivalent to the same claim about the map $\THH(H\mathbb{Z})^{tC_p}\to \THH(H\mathbb{F}_p)^{tC_p}$, where the $C_p$-actions are trivial. 
    This map is compatible with the Tate spectral sequences
    \begin{align*}
        E^2_{s,t} &= \widehat{H}^{t}(C_p;\THH(H\mathbb{Z}))\Rightarrow \THH(H\mathbb{Z})^{tC_p}\\
        (E^2_{s,t})' &= \widehat{H}^{t}(C_p;\THH(H\mathbb{F}_p))\Rightarrow \THH(H\mathbb{F}_p)^{tC_p}
    \end{align*}
    where the map is induced by the map $\THH(H\mathbb{Z})\to \THH(H\mathbb{F}_p)$.  B\"okstedt computed that $\pi_*(\THH(H\mathbb{Z}))=0$ for $*$ positive and even, but $\pi_*(\THH(H\mathbb{F}_p))=0$ for $*$ odd.  Thus, the map $E^2_{s,t}\to (E^2_{s,t})'$ is zero unless $t=0$.  The claim then follows from the fact that
    \[
        E^2_{s,0} = \widehat{H}^s(C_p;\mathbb{Z})
    \]
    is zero when $s$ is odd.
\end{proof}

\begin{corollary}
\label{cor-eGoesTo0}
    The map $q_*$ takes $e$ to $0$.
\end{corollary}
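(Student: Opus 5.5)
The idea is to exploit naturality of the Tate square under the map $f\colon S\to T$ induced by $H\underline{\mathbb{Z}}\to\HFp$, combining \cref{StoTGeoFixMap} with \cref{lem-StoTTatepDegree3}. The map from geometric fixed points to the Tate construction is natural in the genuine $C_p$-spectrum. So $f$ gives a commutative square
\[
\begin{tikzcd}
    S\geop \ar[r,"f"] \ar[d,"q_S"] & T\geop \ar[d,"q"]\\
    S^{tC_p} \ar[r,"f^t"] & T^{tC_p}
\end{tikzcd}
\]
of commutative ring spectra. This induces a commutative square on homotopy groups in each degree.

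First, by \cref{StoTGeoFixMap}, the class $e\in\pi_3(T\geop)$ lifts to some $\tilde e\in\pi_3(S\geop)$ with $f_*(\tilde e)=e$. Commutativity of the square then gives
\[
q_*(e)=q_*f_*(\tilde e)=f^t_*\bigl((q_S)_*(\tilde e)\bigr).
\]
The element $(q_S)_*(\tilde e)$ lies in $\pi_3(S^{tC_p})$, which is an odd degree. By \cref{lem-StoTTatepDegree3}, the map $f^t_*$ vanishes in all odd degrees, so $q_*(e)=0$.

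The main point requiring care is that the identification of $e$ in \cref{StoTGeoFixMap} and the map $q$ in \eqref{eq: pullback square} refer to the same map from geometric fixed points to the Tate construction. This holds because both are obtained by smashing with $S^0\to\widetilde{E}G$ and then mapping along $EG_+\to S^0$, which is natural in $C_p$-spectra. Hence applying it to $f$ yields the square above.

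We also need $\pi_*(S^{tC_p})\to\pi_*(T^{tC_p})$ to be computed by the map of underlying $\THH$'s with trivial action, which is exactly the setting of \cref{lem-StoTTatepDegree3}. No further input is needed.
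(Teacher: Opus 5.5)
Your proof is correct and follows the paper's proof: both use the naturality square relating $f^{\Phi C_p}$ and $f^{tC_p}$, lift $e$ along $f^{\Phi C_p}_*$ using \cref{StoTGeoFixMap}, and conclude from the vanishing of $f^{tC_p}_*$ in odd degrees (\cref{lem-StoTTatepDegree3}). Your remarks on why the geometric-fixed-points-to-Tate map is natural just spell out what the paper compresses into ``by naturality.''
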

\begin{proof}
    By naturality, there is a commutative square
    \begin{cd}
        S\geop \ar[r,"f\geop"] \ar[d] & T\geop \ar[d,"q"] \\
        S\tatep \ar[r,"f\tatep"] & T\tatep.
    \end{cd}
    By \cref{lem-StoTTatepDegree3}, $f\tatep_*$ is $0$ in degree $3$, and by \cref{StoTGeoFixMap}, the class $e$ is in the image of $f_*\geop$. The result follows.
\end{proof}

We can now use \cref{prop: most of q,cor: subring} to describe $\pi_*(T^{C_p})$.  Essentially, one must check which monomials in $\mathbb{F}_{p}[b,c]\otimes \Lambda_{\mathbb{F}_p}[d,e]\otimes \Gamma[a]$ are mapped by $q_*$ to elements which are in the image of $j_*$.  This amounts to keeping track of which generators do not have terms with negative powers of $y$.  For instance, the element $c$ is not in $\pi_*(T^{C_p})$, as $q_*(c)=y^{-1}$. However the elements $ec$ and $ac$ are in $\pi_*(T^{C_p})$, as both map to $0$ under $q_*$.

\begin{theorem}\label{thm:Bokstedt}
    The ring $\pi_*(T^{C_p})$ is isomorphic to the subring of $\mathbb{F}_{p}[b,c]\otimes \Lambda_{\mathbb{F}_p}[d,e]\otimes \Gamma[a]$ generated by all monomials of the form $b^n$, or are divisible by either $\gamma_n(a)$ or $e$.
\end{theorem}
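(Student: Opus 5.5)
By \cref{cor: subring}, $\pi_*(T^{C_p})$ is identified, via the injection $i_*$, with the set of elements $z\in \mathbb{F}_{p}[b,c]\otimes \Lambda_{\mathbb{F}_p}[d,e]\otimes \Gamma[a]$ such that $q_*(z)$ lies in $\mathrm{im}(j_*)=\Lambda(x)\otimes\F_p[y,b]$. So the plan is to compute $q_*$ on an arbitrary element and read off this preimage. The map $q$ is a map of $\mathbb{E}_\infty$-rings, so $q_*$ is a ring map. By \cref{prop: most of q} and \cref{cor-eGoesTo0} it is therefore determined by
\[
q_*(\gamma_n(a))=0,\quad q_*(b)=b,\quad q_*(c)=y^{-1},\quad q_*(d)=xy^{-1},\quad q_*(e)=0 .
\]

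First I work out $q_*$ on the monomial basis $b^m c^k d^\epsilon e^\delta\gamma_n(a)$. If $n\geq 1$ or $\delta=1$, the image is $0$, which lies in $\mathrm{im}(j_*)$. Otherwise the image is $b^m y^{-k}$ if $\epsilon=0$, and $b^m x y^{-k-1}$ if $\epsilon=1$. These lie in $\Lambda(x)\otimes\F_p[y,b]$ exactly when $k=0$ and $\epsilon=0$. Let $K$ be the span of the monomials divisible by $e$ or by some $\gamma_n(a)$ with $n\geq1$. Then $q_*$ vanishes on $K$, and on the complementary span $\F_p[b,c]\otimes\Lambda[d]$ it is the injective ring map $b\mapsto b$, $c\mapsto y^{-1}$, $d\mapsto xy^{-1}$. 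It is injective because it is the composite of $\F_p[b,c]\otimes\Lambda[d]\cong\F_p[b,c']\otimes\Lambda[d']$ with the inclusion, via the invertible change of variables $c'=c$, $d'=dc^{-1}$ inside the localization.

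Next I handle a general element $z=z_0+z_1$ with $z_0\in\F_p[b,c]\otimes\Lambda[d]$ and $z_1\in K$. Its image is $q_*(z)=q_*(z_0)$. The monomials $b^m y^{-k}$ and $b^m x y^{-k-1}$ are linearly independent in $\Lambda(x)\otimes\F_p[y^{\pm},b]$, so $q_*(z_0)$ has no negative powers of $y$ exactly when every monomial of $z_0$ has $k=\epsilon=0$, i.e.\ when $z_0\in\F_p[b]$. Hence the preimage is precisely $\F_p[b]\oplus K$. This is a subring, because $K$ is an ideal. It is the subring generated by the $b^n$ together with the monomials divisible by $\gamma_n(a)$ or $e$, as claimed.

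Finally, \cref{cor: subring} identifies $\pi_*(T^{C_p})$ with this preimage only as a set. Since $i_*$ is an injective ring homomorphism, this identification is a ring isomorphism onto the subring.

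Given the prior results, the step most likely to cause trouble is the linear-independence bookkeeping. One must make sure that cancellation among the images of different monomials cannot produce an element of $\mathrm{im}(j_*)$ from a non-admissible combination. The injectivity argument above on $\F_p[b,c]\otimes\Lambda[d]$ is what rules this out.
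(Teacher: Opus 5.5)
Your proposal is correct and is essentially the paper's argument: identify $\pi_*(T^{C_p})$ with $q_*^{-1}(\mathrm{im}\, j_*)$ via \cref{cor: subring}, then read off the admissible monomials from the values of $q_*$ on $\gamma_n(a), b, c, d, e$. Your splitting into $\F_p[b,c]\otimes\Lambda[d]$ plus the ideal $K$, together with the linear-independence check, makes explicit the no-cancellation bookkeeping the paper leaves implicit. One small wording fix: $dc^{-1}$ does not live in $\F_p[b,c]\otimes\Lambda[d]$, so the injectivity is really localization at $c$ followed by the isomorphism $\F_p[b,c^{\pm1}]\otimes\Lambda[d]\cong\Lambda(x)\otimes\F_p[y^{\pm1},b]$, though your linear-independence argument already suffices.
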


\subsection{The case of non-modular characteristic}

We end with a brief subsection explaining how to compute $\ETHH(H\underline{\F})$ in $G$-spectra, when $\F$ is a field of characteristic prime to $|G|$. We write $\underline{\F}$ for the constant $G$-Green functor at the field $\F$. 
\begin{theorem}
    Let $\F$ be a field of characteristic $p$ and let $G$ be a finite group of order coprime to $p$.  There is an isomorphism of graded rings
    \[
        \pi_*^G(\ETHH(H\underline{\F}))\cong \pi_*(\THH(H\F)).
    \]
    In particular, when $\F = \F_p$ we have $\pi_*^G(\ETHH(H\underline{\F}))\cong \F_p[b]$ where $|b|=2$.
\end{theorem}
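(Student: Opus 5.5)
The plan is to show that when $|G|$ is invertible in $\F$, the $G$-spectrum $\ETHH(H\underline{\F})$ is (after restriction to $G$) equivalent to the Borel-complete, in fact ``cofree'', object attached to its underlying spectrum with trivial action, and that its $G$-fixed points are just $\THH(H\F)$.

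\textbf{Step 1: $H\underline{\F}$ is Borel complete with vanishing Tate.} For every subgroup $H\le G$, the spectrum $H\underline{\F}^{tH}$ is a module over $H\F^{tH}$. The Tate cohomology $\widehat{H}^*(H;\F)$ is annihilated by $|H|$, which is a unit in $\F$, so $H\F^{tH}\simeq 0$. The homotopy fixed point spectral sequence for $H\underline{\F}^{hH}$ is concentrated on the line $s=0$ with value $\F$. Therefore $H\underline{\F}^H\to H\underline{\F}^{hH}$ is an equivalence. The Tate square (and its analogue for all subgroups) then shows that the geometric fixed points $H\underline{\F}^{\Phi H}$ vanish for every $H\neq e$. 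Equivalently, $\widetilde{E}\mathcal{P}\wedge H\underline{\F}\simeq 0$, where $\mathcal{P}$ is the family of proper subgroups.

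\textbf{Step 2: transfer this to $\ETHH$.} Let $X=\Res^{G\times S^1}_G\ETHH(H\underline{\F})$. It is a ring $G$-spectrum and a module over $H\underline{\F}$ via the unit. Hence every geometric fixed point spectrum $X^{\Phi H}$ with $H\ne e$ is a module over $H\underline{\F}^{\Phi H}\simeq 0$ and so vanishes. This is the same argument as \cref{cor-geomfp-ETHH}, using that geometric fixed points are symmetric monoidal. A $G$-spectrum all of whose nontrivial geometric fixed points vanish is free, i.e.\ $EG_+\wedge X\simeq X$. Its Tate constructions then vanish: $X^{tH}$ is a module over $H\underline{\F}^{tH}\simeq 0$. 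The isotropy-separation squares therefore give
\[
X^G\simeq X^{hG}.
\]

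\textbf{Step 3: compute $X^{hG}$.} As in the $C_p$ case, the underlying spectrum with $G$-action is $\THH(H\F)$ with trivial action. The homotopy fixed point spectral sequence has
\[
E_2=H^{*}(G;\pi_*\THH(H\F)),
\]
which is concentrated in cohomological degree $0$ because $|G|$ is invertible on the $\F$-vector spaces $\pi_t\THH(H\F)$. So it collapses to $\pi_*\THH(H\F)$ with no extension problems. It is multiplicative, and the edge map $X^{hG}\to X^e$ is a ring map, which gives the ring isomorphism. For $\F=\F_p$, Bökstedt's theorem yields $\F_p[b]$ with $|b|=2$. For a general $\F$ one uses $\THH(H\F)\simeq H\F\wedge_{H\F_p}\THH(H\F_p)$ when $\F$ is perfect; otherwise we keep the statement as $\pi_*\THH(H\F)$, which needs no further computation.

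\textbf{Main obstacle.} The most delicate point is Step 2: justifying that $X$ is an $H\underline{\F}$-module in $G$-spectra, so that the vanishing of geometric fixed points and of Tate constructions passes to it. This holds because $\ETHH(H\underline{\F})$ is a commutative $G$-ring receiving a ring map from $H\underline{\F}$. For a non-abelian $G$ one must also check $X^G\simeq X^{hG}$ using all subgroups, not just a Tate square for $C_p$. Here I would invoke the general fact that a $G$-spectrum which is a module over a ring with $|G|$ invertible in $\pi_0$ of all its fixed points is Borel. This follows because the transfer-then-restriction composite $\mathrm{tr}\circ\mathrm{res}$ acts on fixed points as multiplication by an element that is a unit.
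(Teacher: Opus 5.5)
Your proof is correct, and it takes a genuinely different route from the paper's; your route is actually the sounder one.

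\textbf{The paper's route.} The paper argues through the geometric fixed points:
\begin{itemize}
\item it asserts that the Tate spectral sequence for $\ETHH(H\underline{\F})^{tG}$ is concentrated on the $s=0$ line, so that $\ETHH(H\underline{\F})^{hG}\to \ETHH(H\underline{\F})^{tG}$ is an equivalence;
\item it then uses the Tate square to get $\ETHH(H\underline{\F})^{G}\simeq \ETHH(H\underline{\F})^{\Phi G}\simeq \THH(H\underline{\F}^{\Phi G})$;
\item finally it invokes the Greenlees--Meier lemma to claim $H\underline{\F}^{\Phi G}\simeq \tau_{\geq 0}H\F^{tG}\simeq H\F$.
\end{itemize}
As you observe, when $|G|\in\F^\times$ one has $\widehat{H}^0(G;\F)=\F/|G|\F=0$. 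So in fact $H\F^{tG}\simeq 0$, $\ETHH(H\underline{\F})^{tG}\simeq 0$, and $H\underline{\F}^{\Phi H}\simeq 0$ for all $H\neq e$. The paper's intermediate identifications are therefore incorrect, even though its final answer is right.

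\textbf{Your route.} Your argument places the answer where it actually lives. The nontrivial geometric fixed points and the Tate constructions vanish. The isotropy separation square built from $\widetilde{E}G$ then gives $X^G\simeq X^{hG}$; unlike a square with $\Phi G$ in the corner, this square is valid for arbitrary finite $G$. The homotopy fixed point spectral sequence is concentrated in filtration $0$, and the restriction/edge map is a ring map, so you get the ring isomorphism with $\pi_*\THH(H\F)$.

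\textbf{Two small precision points.}
\begin{itemize}
\item Vanishing of $\Phi^H$ for all $H\neq e$ is equivalent to $\widetilde{E}G\wedge H\underline{\F}\simeq 0$. It is not equivalent to $\widetilde{E}\mathcal{P}\wedge H\underline{\F}\simeq 0$, which only detects $\Phi^G$.
\item For $H$ not of prime order, your ``Tate square for all subgroups'' argument does work. The quickest justification of $H\underline{\F}^{\Phi H}\simeq 0$, though, is that it is a ring spectrum whose $\pi_0$ is $\F$ modulo the image of transfers from proper subgroups. That image contains $\mathrm{tr}^H_e(1)=|H|\in\F^\times$, so $\pi_0=0$. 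This is essentially the transfer argument you sketch at the end.
\end{itemize}
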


\begin{proof}
    Since $\Res^{G\times S^1}_{e}\ETHH(H\underline{\F})\simeq \THH(H\underline{\F})$ is an $H\F$-algebra, we see that the homotopy fixed point and Tate spectral sequences computing $\ETHH(H\underline{\F})^{hG}$ and $\ETHH(H\underline{\F})^{tG}$ are both concentrated on the $y$-axis and collapse immediately.  It follows that the maps 
    \[
        \THH(H\F)\to \ETHH(H\underline{\F})^{hG}\to \ETHH(H\underline{\F})^{tG}
    \]
    are both equivalences.  Since the Tate square is a pullback, the map 
    \[\ETHH(H\underline{\F})^G\to \ETHH(H\underline{\F})^{\Phi G}\simeq \THH(H\underline{\F}^{\Phi G})\] 
    is also an equivalence of $\mathbb{E}_{\infty}$-rings.  Finally, applying \cref{lem:Greenlees--Meier} implies that $H\F^{\Phi G}\simeq \tau_{\geq0} H\F^{tG}\simeq H\F$, which proves the claim.
\end{proof}

\bibliographystyle{alpha}
\bibliography{ref}

\end{document}